%%%%%%%%%%%%%%%%%%%%%%%%%
% LaTeX file
%
%
%
%
%     Original version:
%     Last Modified:
%
%
%%%%%%%%%%%%%%%%%%%%%%%%%%

\documentclass[12pt,a4paper]{article}
%\IfFileExists{ajr.sty}
%{\documentclass[12pt,a5paper]{article}}
%{\documentclass[12pt,a4paper]{article}}

\usepackage{amssymb,latexsym,amsmath,amsthm}
\usepackage{url}
\usepackage{movie15}

%\IfFileExists{ajr.sty}{\usepackage{ajr}}{}

\usepackage{graphicx,hyperref}

\allowdisplaybreaks

\newcommand{\p}{\partial}

\renewcommand{\phi}{\varphi}
\newcommand{\e}{\epsilon}

\newcommand{\R}{{\mathbb R}}

\newcommand{\EX}{\operatorname{\mathbb E}}

\newcommand{\fu}{{\mathfrak u}}
\newcommand{\fB}{{\mathfrak B}}
\newcommand{\sign}{\operatorname{sign}}
\newcommand{\Var}{\operatorname{Var}}
\newcommand{\Z}[1]{\ensuremath{e^{#1}{\star}}}
\newcommand{\D}[2]{\mathchoice
  {\frac{\partial #2}{\partial #1}}% display
  {{\partial #2}/{\partial #1}}% text
  {{\partial #2}/{\partial #1}}% script
  {{\partial #2}/{\partial #1}}% scriptscript
  }
\newcommand{\Dn}[3]{\mathchoice
  {\frac{\partial^#2 #3}{\partial #1^#2}}%
  {{\partial^#2 #3}/{\partial #1^#2}}%
  {{\partial^#2 #3}/{\partial #1^#2}}%
  {{\partial^#2 #3}/{\partial #1^#2}}%
  }
\newcommand{\DD}[2]{\Dn{#1}2{#2}}

\newcommand{\Ord}[1]{\ensuremath{\mathcal O\big(#1\big)}}

\def\doi{}
\ifx\href\undefined
     \renewcommand{\doi}[1]{\url{http://dx.doi.org/#1}}
\else\renewcommand{\doi}[1]{\href{http://dx.doi.org/#1}{doi:#1}}
\fi

\newcommand{\cD}{{\cal D}}

\newcommand{\cL}{\ensuremath{\mathcal L}}

\newcommand{\pde}{\textsc{pde}}
\newcommand{\sde}{\textsc{sde}}
\newcommand{\spde}{\textsc{spde}}
\newcommand{\rat}[2]{{\textstyle\frac{#1}{#2}}}
\newcommand{\rxi}{\rat{\xi}{\sqrt2}}

% theorem things
\newtheorem{theorem}{Theorem}
\newtheorem{lemma}[theorem]{Lemma}
\newtheorem{coro}[theorem]{Corollary}
\theoremstyle{remark}
\newtheorem{remark}[theorem]{Remark}
\topsep=\parskip

\title{Self similarity and attraction in stochastic nonlinear reaction-diffusion systems}

\author{Wei Wang\thanks{School of Mathematics, University of Adelaide, South Australia, \textsc{Australia}.
\protect\url{mailto:w.wang@adelaide.edu.au}; 
and Department of Mathematics, Nanjing University, Nanjing, \textsc{China}.
\protect\url{mailto:wangweinju@yahoo.com.cn} }
\and
A.~J. Roberts\thanks{School of Mathematics, University of Adelaide, South Australia, \textsc{Australia}. 
\protect\url{mailto:anthony.roberts@adelaide.edu.au}}}

\date{6 November 2011}

\begin{document}

\maketitle

\begin{abstract}
Similarity solutions play an important role in many fields of science: we consider here similarity in stochastic dynamics.  Important issues are not only the existence of stochastic similarity, but also whether a similarity solution is dynamically attractive, and if it is, to what particular solution does the system evolve.  By recasting a class of stochastic PDEs in a form to which stochastic centre manifold theory may be applied we resolve these issues in this class.  For definiteness, a first example of self-similarity of the Burgers' equation driven by some stochastic forced is studied. Under suitable assumptions, a stationary solution is constructed which yields the existence of a stochastic self-similar solution for the stochastic Burgers' equation.  Furthermore, the asymptotic convergence to the self-similar solution is proved.  Second, in more general stochastic reaction-diffusion systems stochastic centre manifold theory provides a framework to construct the similarity solution, confirm its relevance, and determines the correct solution for any compact initial condition.  Third, we argue that dynamically moving the spatial origin and dynamically stretching time improves the description of the stochastic similarity.  Lastly, an application to an extremely simple model of turbulent mixing shows how anomalous fluctuations may arise in eddy diffusivities.  The techniques and results we discuss should be applicable to a wide range of stochastic similarity problems.
\end{abstract}

\tableofcontents

%%%%%%%%%%%%%%%%%%%%%%%%
%%%%%%%%%%%%%%%%%%%%%%%%

\section{Introduction}

Consider the stochastic field~$\fu(t,x)$ governed by the nonlinear reaction-diffusion stochastic partial differential equation (\spde) 
\begin{equation}
\fu_t=\fu_{xx}+ f(\partial_x,\fu,t^{-1/2})+ g(\partial_x,\fu,t^{-1/2})B(t,x)
\label{eq:prob}
\end{equation}
on an infinite spatial domain in one dimension.  Here the drift nonlinearity~$f(\partial_x,\fu,t^{-1/2})$ is cubic in the field~$\fu$, the spatial derivative, and decaying time~$t^{-1/2}$: stationary examples include the cubic reaction $f=-\fu^3$, the self-advection~$\fu\fu_x$ of Burgers' \pde, and the nonlinear diffusion~$\epsilon\sign(\fu_{xx})\fu_{xx}$ explored by Barenblatt~\cite[\S3.2.1]{Barenblatt96}.    For some stochastic process~$B$, with characteristics defined variously in later sections, the noise term has nonlinear coefficient~$g(\partial_x,\fu,t^{-1/2})$ which is quadratic in the field~$\fu$, the spatial derivative, and~$t^{-1/2}$: for example, the stationary linear advection~$\fu_x$.  Equation~\eqref{eq:prob} informally describes the scope of \spde{}s we consider.

Interpret all stochastic calculus in the Stratonovich sense so that, with care, the normal rules of calculus apply.

We apply some stochastic theory together with centre manifold theory to help understand and solve the long time evolution of such stochastic diffusion with nonlinear reaction-advection. But on the infinite spatial domain there is no clear cut slow eigenspace for \spde~\eqref{eq:prob}.  For example, in the absence of noise and nonlinearity, $f=g=0$\,, one substitutes spatial structures with wavenumber~$k$ into the diffusion \pde~\eqref{eq:prob}, $\fu\propto e^{ikx+\lambda t}$, to find the continuous spectrum $\lambda=-k^2$.  Such continuous spectra do not have a well defined decomposition between fast transients and the slow, long lasting, modes of interest.

A special transformation changes our view of the dynamics of \spde~\eqref{eq:prob} into one with a clear fast-slow separation.  We extend to stochastic dynamics the transformation used so interestingly by Wayne et al.~\cite{Beck2009, Wayne94, Wayne96} for deterministic similarity, and analogous deterministic analysis on appropriate initial conditions~\cite{Suslov98a}. Introducing log-time and similarity variables transforms the stochastic problem to one of seeking a stochastic field~$u(\tau,\xi)$ where
\begin{equation}
        \tau=\log {t}\,,\quad
        \xi=\frac{x}{\sqrt{t}}\,,\quad
        \fu=\frac{1}{\sqrt t}u(\tau,\xi),\quad
        \omega\in\Omega\,.
        \label{eq:gene}
\end{equation}
Then the dependence upon the scaled space variable~$\xi$ causes the diffusive Gaussian spread following a point release,
\begin{equation}
        \fu=\frac{a}{2\sqrt{\pi t}}\exp\left[\frac{-x^2}{4t}\right],
        \label{eq:gauss}
\end{equation}
to correspond to a fixed point of the dynamics for the stretched field~$u$, namely
\begin{equation}
        u_*=\frac{a}{2\sqrt{\pi}}\exp\left[\frac{-\xi^2}{4}\right].
        \label{eq:fix}
\end{equation}
Further, we shall see that the algebraic decay in real time~$t$ from any compact release to the  Gaussian~\eqref{eq:gauss} transforms to an exponentially quick decay in log-time~$\tau$ to the fixed point~\eqref{eq:fix}.  Solutions of the \spde~\eqref{eq:prob} that are approximately the self-similar Gaussian spread will be approximated based upon analysis about the fixed points~\eqref{eq:fix} in log-time

A similar, but stochastic, version of the transformation~\eqref{eq:gene} shows the appearance of an eddy diffusion model of the long time behaviour of a simple turbulent mixing model introduced by Majda, McLaughlin, Camassa et al.~\cite{Majda93, McLaughlin96, Bronski2000, Camassa2008}.  Whereas they primarily explored solutions statistically stationary in space, as is the theme in this article, Section~\ref{s:saxft} characterises the spread following a compact release.  Although a mean eddy diffusion emerges, significant anomalous diffusion occurs throughout due to stochastic fluctuations.

\begin{figure}
\centering
\includemovie[controls,poster,text={View a movie in Acrobat Reader.}]{\linewidth}{\linewidth}{similarity3.mpg}
\\[2ex]
\caption{movie of stochastic self-similarity emerging in a realisation of a stochastic Burgers' equation.  Here simply obtained via a naive but fine scale discretisation of the \spde~\eqref{e:txBurgers} for noise $\fB\propto \dot W(t,x)/t$.  The top frame show the decay and spread in physical variables, whereas the bottom frame shows the same realisation in similarity variables~\eqref{eq:gene} and hence the approach to a stochastic self-similarity.}
\label{fig:similarity3}
\end{figure}

Section~\ref{sec:sbe} proves the existence and emergence of stochastic self-similarity for a stochastic Burgers' equation in the class of the \spde~\eqref{eq:prob}.
Burgers' deterministic partial differential equation for a field~$\fu (t,x)$ is the case $f=-\fu\fu_x$ and $g=0$ of the \spde~\eqref{eq:prob}, namely
$\fu _t= \fu _{xx}-\fu \fu _x$\,.
This \pde\ was proposed by Burgers'~\cite{Burgers} to illustrate the statistical theory of turbulent fluid motion. To better model the turbulence and turbulent flow in the presence of random forces,  stochastic Burgers' equations have been suggested~\cite{CAMSS88,CTMK92,  Hay00,  HY75, Jeng69,  Sinai91, Sinai92} and  studied by many people~\cite{Bertini94, BD07, Truman, E99, Holden94, Prato94, Prato95,   Truman08}.  We explore the stochastic solutions~$\fu (t,x,\omega)$ to the stochastic Burgers' \pde
%\marginpar{??Adpat use of $\eta$ to my $gB$??  Note you introduce a different meaning of $\eta$ later, so perhaps better to change $\eta$ here.}
\begin{equation}\label{e:txBurgers}
\fu _t= \fu _{xx}-\fu \fu _x+\fB(t,x)
\end{equation}
where $\fB(t,x)$ is some stochastic force,  to be detailed later, defined on a complete probability space~$(\Omega, \mathcal{F}, \mathbb{P})$.
Figure~\ref{fig:similarity3} shows just one realisation of solutions to Burgers' \spde~\eqref{e:txBurgers} to illustrate the emergence of stochastic self-similarity.
On \emph{bounded} domains Da Prato et al.~\cite{Prato94} proved the existence and uniqueness of global solution  when the noise term~$\fB(t,x)$ is a noise white in time and fixed spatial structure. Holden et al.~\cite{Holden94} also derived the same result by some white noise calculus. On an \emph{unbounded} domain Bertini et al.~\cite{Bertini94} constructed a global solution by a Cole--Hopf transformation with space-time white noise~$\fB(t,x)$.

Here we consider a family of solutions with special spatio-temporal form, \emph{stochastically self-similar solutions}, of the stochastic Burgers' equation~(\ref{e:txBurgers}) on the unbounded real line with a particular form for the stochastic force~$\fB(t,x)$. The existence of self-similar solutions and the asymptotic emergence of a self-similar solutions, Figure~\ref{fig:similarity3}, describes the self-similarity of the stochastic Burgers' equation.

Self-similarity is an important property of some convective diffusion equations, of which Burgers' equation is a special case. Many researchers have established the existence of self-similar solutions of deterministic systems~\cite{Xin96, Zuazua91, Liu85, Suslov98a,  Zuazua94}, and described the asymptotic behaviour of such self-similar solutions~\cite{Beck2009,  Zuazua91, Miller,  Zuazua94}.  Here for stochastic partial differential equation, we construct a self-similar solution in the sense of distribution, which is called a random  self-similar solution. Previously, Eyink and Xin~\cite{Xin99} studied the deterministic similarity of statistical quantities.  In contrast, here we additionally explore the structures of the stochastic fluctuations.

\subsection{A stochastic slow manifold emerges}
\label{sec:ssme}

Let's look at the transformation~\eqref{eq:gene} applied to the general reaction-diffusion \spde~\eqref{eq:prob}.  First, consider the coefficient functions: because the reaction term~$f$ is assumed cubic in its arguments $f(\partial_x,\fu,t^{-1/2})=t^{-3/2}f(\partial_\xi,u,1)$; similarly, as $g$~is assumed quadratic in its arguments the noise coefficient $g(\partial_x,\fu,t^{-1/2})=t^{-1}g(\partial_\xi,u,1)$.  Second, the original \spde~\eqref{eq:prob} thus transforms to the \spde
\begin{equation}
        u_\tau=\cL u+f(\partial_\xi,u,1)
        +g(\partial_\xi,u,1)\dot W,
        \label{eq:phit}
\end{equation}
where the new noise $\dot W\equiv\sqrt t B$ is assumed to be $Q$-Wiener space-time noise, and where the linear operator
\begin{equation}
        \cL u=u_{\xi\xi}+\rat{1}{2}\xi u_\xi+\rat{1}{2}u\,.
        \label{eq:lphi}
\end{equation}
The long time dynamics of the original \spde~\eqref{eq:prob} are revealed by the dynamics of the transformed \spde~\eqref{eq:phit}.

Consider the \spde~\eqref{eq:phit} for negligible noise, $\dot W= 0$\,. The \spde\ then has equilibrium $u=0$\,.  Linearised about this equilibrium the \spde\ has discrete spectrum $\lambda_k=-k/2$\,, $k=0,1,2,\ldots$\,, with the corresponding eigenfunctions
\begin{equation}
e_k(\xi)=c_kH_k(\xi/\sqrt2)G(\xi),
\quad\text{for Gaussian } 
G(\xi)=\frac1{2\sqrt\pi}\exp\left[\frac{-\xi^2}4\right],
\label{eq:eigen}
\end{equation}
where the Hermite polynomials $H_k(\zeta)=(-1)^ke^{\zeta^2/2}\Dn \zeta k{e^{-\zeta^2/2}}$.
These eigenfunctions forms a standard orthonormal basis: we choose the normalisation constant~$c_k=(2\sqrt\pi/k!)^{1/2}$ as then $\int_{-\infty}^\infty e_k(\xi)^2e^{\xi^2/4}\,d\xi=1$\,.
%\marginpar{Had to change the normalisation constant to make this integral be correct??}
The wonderful aspect of this transformation to log-time is that the spectrum of the linear operator~$\cL$ is discrete and there exists a clear separation of the fast modes, mode numbers $k\geq1$\,, from the slow mode, $k=0$\,.
That is, the equilibrium $u=0$ of the \spde~\eqref{eq:phit} has the slow subspace $u=aG(\xi)$ for all~$a$.
We use the existence of this slow subspace as the basis for analysing emergent stochastic self-similarity. 

We suppose the noise is $Q$-Wiener with the following spectral decomposition
\begin{equation}
\dot W(\tau,\xi)=\sum_{k=0}^\infty b_k\dot w_k(\tau)e_k(\xi)
=\sum_{k=0}^\infty b_k\dot w_k(\tau)H_k(\xi/\sqrt2)G(\xi),
\label{eq:dotw}
\end{equation}
where $w_k(\tau)$ are independent Wiener processes, the noise coefficients~$b_k$ decay sufficiently rapidly with mode number~$k$ (for previous rigorous theory only a finite number of noise coefficients~$b_k$ can be non-zero), and the overall magnitude of the noise terms are denoted by some norm $b=\|\vec b_k\|$.
Because of the spectrum, the conditions on the noise, and for smooth enough coefficients $f$~and~$g$, there must exist a stochastic slow manifold~\cite{Boxler89, Arnold03}.

Furthermore, under suitable conditions on the noise and the \spde\ coefficients $f$~and~$g$, the stochastic slow manifold is exponentially quickly attractive to all nearby initial conditions.  Thus stochastic self-similarity of solutions to the \spde~\eqref{eq:prob} emerges from generic compact initial conditions.

%\paragraph{Question} what implications does this expansion have on the original form of the spatio-temporal noise?  May need to remember that the initial time is finite, such as $t_0=1$\,.

%\paragraph{Question} Is there any subtlty if the noise is multiplicative?

%\paragraph{Question} Is there any problems in original space-time with convolutions over log-time?

\section{A stochastic Burgers' equation}
\label{sec:sbe}

This first extensive section proves in detail the emergence and nature of stochastic self-similarity in the stochastic Burgers' equation~\eqref{e:txBurgers}.  Subsequent sections on other \spde{}s are less rigorous but apply more widely. 
%\marginpar{Could your proofs and results in this section apply to the case $g=\fu_x$ as well??}

We here construct and analyse a stochastic self-similar solution for the stochastic Burgers' equation~(\ref{e:txBurgers}) for $t\geq 1$ with a special stochastic force~$ \eta$.  For this we invoke the self-similarity transform~\eqref{eq:gene},
%\begin{equation*}
%\tau=\log t\,,\quad \xi=\frac{x}{\sqrt{t}}\,,
%\quad
%u(\tau, \xi,\omega)=\sqrt{t}\,\fu (t,x,\omega)\,,\quad \omega\in\Omega\,,
%\end{equation*}
then
\begin{equation}\label{e:tau-xi-Burgers}
du=\left[ u_{\xi\xi}+\rat12\xi u_\xi+\rat12 u-uu_\xi\right]d\tau+dW(\tau,\xi)
\end{equation}
with initial $u(0,\xi)=u^{0}(\xi)=\fu(1,\xi)$.
Stationary solutions~$ \bar{u}(\xi,\omega)$ to equation~(\ref{e:tau-xi-Burgers}) is a self-similar solution of stochastic Burgers' equation~(\ref{e:txBurgers}).
Here in order to construct a self-similar solution of stochastic Burgers' equation~(\ref{e:txBurgers}) we assume that stochastic process~$ W(\tau, \xi)$ is an~$L^2(\R)$ valued Q-Wiener process defined on~$ (\Omega,\mathcal{F}, \mathbb{P})$ with covariance operator~$Q$ which is detailed later.

To construct a stationary solution of~(\ref{e:tau-xi-Burgers}), we consider the system in a weighted space~$ L^2(K)$ defined in the next subsection~\ref{ss:p}. First by using energy estimates and the compact embedding results of the weighted space, we show the tightness of solution with initial value in the space~$ L^2(K)$.  Then the classical Bogolyubov--Krylov method~\cite{Arnold03} implies the existence of stationary solution of~(\ref{e:tau-xi-Burgers}).  Further, we show the locally attractive property of
the self-similar solution by a Cole--Hopf transformation (Theorem \ref{thm:u-ubar}) and a local random invariant manifold method~(Theorem~\ref{cor:u-ubar}). The Cole--Hopf transformation makes the equation~(\ref{e:tau-xi-Burgers}) to be a linear one which shows that the stationary solution is determined uniquely by the mass of the solution provide the stationary solution is small enough in the space~$ L^2(\Omega, L^\infty(\R))$. Then every small solution in the space~$ L^2(\Omega, L^\infty(\R))$ is attracted by a unique stationary solution. This is also shown by a local random invariant manifold discussion without assuming solution is small in~$L^\infty(\R)$.  The last subsection~\ref{ss:gacsss} shows  the globally asymptotic convergence in probability which shows the existence of global random invariant manifold of equation~(\ref{e:tau-xi-Burgers}).

%%%%%%%%%%%%%
%%%%%%%%%%%%%
%%%%%%%%%%%%%

\subsection{Preliminary}
\label{ss:p}

Consider the stochastic \textsc{pde}~(\ref{e:tau-xi-Burgers}).  For this we use the linear operator~$\cL$ defined in equation~\eqref{eq:lphi}.
%\begin{equation*}
%\cL  u= u_{\xi\xi}+\rat12\xi u_\xi+\rat12u\,.
%\end{equation*}
Define the exponentially increasing weight function $K(\xi)=\exp\{\xi^2/4\}$, and then introduce the following weighted  functional spaces for $p>0$
\begin{equation*}
L^p(K)=\left\{u\in L^p(\R): \|u\|^p_{L^p(K)}=\int_\R|u(\xi)|^pK(\xi)\,d\xi<\infty\right\}
\end{equation*}
and for positive integer~$ k$
\begin{equation*}
H^k(K)=\left\{u\in L^k(K): \|u\|^2_{H^k(K)}=\sum_{0\leq\alpha\leq k}\|D^\alpha u\|^2_{L^2(K)}<\infty\right\}\,.
\end{equation*}
Then linear operator~$\cL $ is self-adjoint and  generates an analytic semigroup~$S(t)$ on the space~$L^2(K)$ with the domain $D(\cL )=H^2(K)$~\cite{Kavian}. Recall that the eigenvalues of operator~$ \cL$ are $\lambda_k=-{k}/{2}$\,, $k=0,1,2,\ldots$\,,
with the corresponding eigenfunctions~\eqref{eq:eigen} forming a standard orthonormal basis of~$L^2(K)$ as $\|e_k\|_{L^2(K)}=1$\,.
%\begin{equation}
%e_k(\xi)=c_kH_k(\xi/\sqrt2)G(\xi),
%\quad\text{for Gaussian } \;
%G(\xi)=\frac1{2\sqrt\pi}\exp\left[\frac{-\xi^2}4\right],
%\label{eq:eigen}
%\end{equation}
%where the Hermite polynomials $H_k(\xi)=(-1)^ke^{\xi^2/2}\Dn \xi k{e^{-\xi^2/2}}$. 
%These eigenfunctions forms a standard orthonormal basis of~$L^2(K)$ by choosing the normalisation constant~$c_k=(k!\,2\sqrt\pi)^{-1/2}$ as then $\|e_k\|_{L^2(K)}=1$\,.

Define inner product on space $L^{2}(K)$ as 
\begin{equation*}
\langle u, v\rangle=\int_{\R}u(\xi)v(\xi)K(\xi)\,d\xi\,,\quad u\,, v\in L^{2}(K).
\end{equation*}
Then denote by $P_{0}$ and $P_{s}$ the linear projection from $L^{2}(K)$ to the slow subspace $E_{0}$ and the stable subspace $E_{s}$ respectively. Then 
$E_0=\text{span}\{e_0(\xi)\}$ and 
\begin{equation*}
E_s=E_{0}^{\bot}=\left\{u\in L^2(K): \int_\R u(\xi)\,d\xi=0\right\}\,.
\end{equation*}
This first lemma presents some basic properties on these weighted spaces~\cite{Kavian}.
\begin{lemma}\label{lem:Kavian}
\begin{enumerate}
  \item The embedding $H^1(K)\subset L^2(K)$ is compact.
  \item There exists $C>0$ such that for any $u\in H^1(K)$
      \begin{equation*}
      \int_\R|u(\xi)|^2|\xi|^2K(\xi)\,d\xi\leq C\int_\R|\nabla u(\xi)|^2K(\xi)\,d\xi\,.
      \end{equation*}
  \item \label{i:3} For any $u\in H^1(K)$,
   \begin{equation*}
    \rat12\int_\R|u(\xi)|^2K(\xi)\,d\xi\leq \int_\R|\nabla u(\xi)|^2K(\xi)\,d\xi\,.
   \end{equation*}
  \item For any $u\in E_s$\,,
   \begin{equation*}
    \left\langle \cL  u, u \right\rangle\leq -\rat12 \|u\|^2_{H^1(K)}\,.
   \end{equation*}
  \item If $u\in H^1(K)$, then $K^{1/2}u\in L^\infty(\R)$.
  \item \label{i:6}  For any $q>2$\,, $\e>0$ there exists constants $C_{\e,q}>0$\,, $R>0$\,, such that for any $u\in H^{1}(K)\cap L^{q}_{\text{loc}}$
 \begin{equation*}
 \|u\|^{2}_{L^{2}(K)}\leq \e \|u_{\xi}\|^{2}_{L^{2}(K)}+C_{\e,q} \|u\|^{2}_{L^{q}(B(0, R))}\,.
 \end{equation*} 
\end{enumerate}
\end{lemma}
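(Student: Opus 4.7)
The entire lemma rests on two elementary observations about the weight $K=e^{\xi^2/4}$: the algebraic identity $K'/K=\xi/2$, and the fact that the substitution $v=K^{1/2}u$ sends $L^2(K)$ isometrically onto $L^2(\R)$ with $v_\xi=K^{1/2}(u_\xi+\frac{\xi}{4}u)$. Once these are in hand, each item reduces to a short calculation.

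For items~2 and~3, I would differentiate $\xi u^2K$ and integrate over $\R$, using $K'=(\xi/2)K$ and the vanishing of boundary terms (first for smooth compactly supported $u$, then by density), to obtain
\begin{equation*}
\int_\R u^2K\,d\xi+\rat12\int_\R \xi^2 u^2 K\,d\xi=-2\int_\R \xi u u_\xi K\,d\xi.
\end{equation*}
Cauchy--Schwarz on the right followed by Young's inequality $2|\xi u u_\xi|K\leq \frac12\xi^2u^2K+2u_\xi^2K$ absorbs the $\int \xi^2u^2K$ term and gives item~3, while dropping the non-negative $\|u\|^2_{L^2(K)}$ term on the left and re-applying Cauchy--Schwarz gives item~2. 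For item~4, integrating the three summands of $\langle \cL u,u\rangle=\int(u_{\xi\xi}+\frac12\xi u_\xi+\frac12u)uK\,d\xi$ by parts, the cross term $-\int u_\xi u K'=-\frac12\int\xi u u_\xi K$ produced by shifting a derivative off $u_{\xi\xi}$ exactly cancels the drift contribution $+\frac12\int \xi u u_\xi K$, leaving the clean identity $\langle\cL u,u\rangle=-\|u_\xi\|_{L^2(K)}^2+\frac12\|u\|_{L^2(K)}^2$. Self-adjointness of $\cL$ on $L^2(K)$ together with the spectral gap $\lambda_1=-1/2$ then force $\|u_\xi\|^2_{L^2(K)}\geq\|u\|^2_{L^2(K)}$ on $E_s$, which combined with this identity yields the claimed $H^1(K)$ estimate.

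Items~5 and~1 both follow from the substitution $v=K^{1/2}u$: item~2 shows $\|v\|^2_{H^1(\R)}\lesssim\|u\|^2_{H^1(K)}$, so the one-dimensional Sobolev embedding $H^1(\R)\hookrightarrow L^\infty(\R)$ furnishes item~5, and Rellich--Kondrachov applied to $\{v_n\}$ on each interval $[-R,R]$, combined with the uniform tail bound $\int_{|\xi|>R}u^2K\leq R^{-2}\int\xi^2u^2K\leq CR^{-2}\|u_\xi\|^2_{L^2(K)}$ from item~2, gives item~1 via a diagonal extraction. Item~6 uses the same tail estimate on $|\xi|>R$ and Hölder's inequality on $B(0,R)$ (where $K\leq e^{R^2/4}$ is bounded) with exponent $q/2>1$ to control $\int_{|\xi|\leq R}u^2\leq |B(0,R)|^{1-2/q}\|u\|^2_{L^q(B(0,R))}$; choosing $R$ so large that $C/R^2<\e$ fixes the constants. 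The main obstacle, and the only place where the sharp factor matters, is the spectral-gap step in item~4: the weighted Poincaré from item~3 alone would not suffice, and one genuinely needs the discreteness of $\sigma(\cL)$ with $\lambda_1=-1/2$.
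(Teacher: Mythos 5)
The paper gives no proof of this lemma at all---it simply cites Kavian \cite{Kavian}---so your proposal supplies the details the authors delegate to the literature, and the computations are the standard ones and check out. The identity $\int_\R u^2K\,d\xi+\rat12\int_\R\xi^2u^2K\,d\xi=-2\int_\R\xi uu_\xi K\,d\xi$ does follow from $K'=(\xi/2)K$ and integration by parts (granting density of $C_c^\infty$ in $H^1(K)$, which is standard but deserves a sentence), Young's inequality then yields items 2 and 3 (with $C=16$ in item 2), the cancellation giving $\langle\cL u,u\rangle=-\|\p_\xi u\|^2_{L^2(K)}+\rat12\|u\|^2_{L^2(K)}$ is exact, and the substitution $v=K^{1/2}u$ together with Rellich on compacta plus the uniform tail bound $\int_{|\xi|>R}u^2K\le CR^{-2}\|\p_\xi u\|^2_{L^2(K)}$ correctly delivers items 1, 5 and 6.

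One step needs to be stated more carefully: item 4. Combining your identity with the spectral gap $\|\p_\xi u\|^2_{L^2(K)}\ge\|u\|^2_{L^2(K)}$ on $E_s$ gives $\langle\cL u,u\rangle\le-\rat12\|\p_\xi u\|^2_{L^2(K)}$, i.e.\ half the gradient \emph{seminorm}, not half the full norm $\|u\|^2_{L^2(K)}+\|\p_\xi u\|^2_{L^2(K)}$. The full-norm version is in fact false: for $u=e_1\in E_s$ one has $\langle\cL e_1,e_1\rangle=-\rat12$ while $\|e_1\|^2_{L^2(K)}=\|\p_\xi e_1\|^2_{L^2(K)}=1$, so $-\rat12\|e_1\|^2_{H^1(K)}=-1$ and the inequality fails. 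Item 4 must therefore be read with the equivalent norm $\|\nabla u\|_{L^2(K)}$ of Remark~\ref{rem:norm} (which is how the paper uses it in the subsequent energy estimates), and under that reading your spectral-gap argument closes the proof; your observation that item 3 alone (constant $1/2$) would only give $\langle\cL u,u\rangle\le0$, so that the discreteness of the spectrum with $\lambda_1=-1/2$ is genuinely needed, is also correct.
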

\begin{remark}\label{rem:norm}
By part~\ref{i:3} in the above lemma, in the space~$ H^1(K)$ we can define norm~$\|\nabla u\|_{L^2(K)}$  which is equivalent to~$\|u\|_{H^1(K)}$\,.
\end{remark}
Further, by the spectrum property of the linear operator~$\cL $  we define~$ (\cL-1/2) ^\gamma$ for any $\gamma\in\R$~\cite{Yosida}. Then define Sobolev space~$H^{\gamma}(K)$, for any $\gamma\in\R$ as~$\cD ((\cL-1/2) ^{\gamma/2})$, the domain of~$ (\cL-1/2) ^{\gamma/2}$, and by the embedding theorem~\cite{Yosida}, $H^{\gamma_1}$~is compactly embedding into~$H^{\gamma_2}$ for  $\gamma_1>\gamma_2$\,.

For our purpose we assume that under the self-similar variable the special stochastic force~$\eta$ is assumed to be an $L^2(K)$-valued Wiener process~$W(\tau, \xi)$, $\tau\geq 0$\,, which is conservative; that is, it satisfies
%\marginpar{Notationally awkward: we have $W(s)$, $W(\tau,\xi)$ and $W(\tau,\xi,\omega)$, and similarly for $u$.  Can we be more consistent somehow?? or at least comment on the diversity??}
\begin{equation}\label{e:intW}
\int_\R W(\tau ,\xi)\,d\xi=0\,.
\end{equation}
Moreover, $W(\tau, \xi)$ has the following spectral expansion
%\marginpar{Need to rationalise use of your $q$~and~$\beta$ versus my $b$~and~$w$??}
\begin{equation}\label{e:Wiener}
W(\tau, \xi)=\sum_{k=1}^\infty b_kw_k
(\tau)e_k(\xi)
\end{equation}
where $\{w_k(\tau)\}_k$ are mutually independent standard scalar Wiener processes. By the conservation assumption~(\ref{e:intW}), $b_0=0$ and so is omitted from this sum.  So  $W(\tau,\xi)=W_s(\tau,\xi)$ the part of~$W(\tau,\xi)$ in the stable  subspace~$E_s$.
Furthermore we assume
\begin{equation}\label{e:traceQ}
\sum_{k=1}^\infty b^{2}_k<
\infty\,.
\end{equation}
For later purpose we extend $W(\tau,\xi)$ to the whole time interval $(-\infty, +\infty)$ by $W(-\tau,\xi)=-W(\tau, \xi)$, $\tau\geq 0$\,.  In the following we consider equation~(\ref{e:tau-xi-Burgers}) on  the canonical  probability space $(\Omega_0, \mathcal{F}_0, \mathbb{P}_{0})$ which consists of the sample path of~$ W(\cdot,\omega)$ in the space~$ C(\R, L^2(K))$, and denote by~$ \{\theta_\tau\}_\tau$ the Wiener shift on~$ \Omega_0$~\cite{Arnold03}. 

\begin{remark}
The special assumption on~$ B(t,x)$ does not exclude the existence of self-similar solution for  other cases. 
\end{remark}

\begin{remark}
Notice here that $\Omega_{0}$ is different from $\Omega$. However by the self similar transformation,   for any $\omega\in\Omega$\,, there is a sample path of $W(\tau,\xi)$, then there is a $\omega_{0}\in\Omega_{0}$ represents one sample path of $W(\tau,\xi)$ in $\Omega_{0}$\,.   The inverse is same. So in the following we see $\Omega_{0}$ same as $\Omega$\,.
\end{remark}

%\marginpar{Need space in $\eta^\a(\tau,\xi)$, and in the $W$ here??  Also, later I use spectral expansion of the $\a=0$ case,~$\eta^0$:  should we link somehow??}
To estimate the solution to~(\ref{e:tau-xi-Burgers}) we need the
Ornstein--Uhlenbeck process~$\eta^\alpha(\tau,\xi)$ which solves the following equation
\begin{equation}\label{OU}
d\eta^\alpha=[\cL \eta^\alpha-
\alpha\eta^\alpha]\,d\tau+dW(\tau,\xi),
\end{equation}
for some $\alpha>0$\,;
that is,
$\eta^\alpha(\tau,\xi)=\int_{-\infty}^\tau e^{(\cL -\alpha)(\tau-s)}dW(s)$.
(Here we introduce the differential~$dW(s)$ to denote the log-time differential~$dW(s,\xi)$ but without explicitly including space~$\xi$ to denote the differential does not involve~$\xi$.)
Then we have the following estimates on~$ \eta^\alpha(\tau,\xi)$.
\begin{lemma}\label{lem:OU-H1}
For any $\e>0$ and $p\geq 2$ there exist $\alpha>0$ such that
\begin{equation*}
\EX\left(\|\eta^\alpha(\tau)\|^p_{H^1(K)}  \right)<\e\,.
\end{equation*}
\end{lemma}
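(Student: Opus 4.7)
The plan is to diagonalise the Ornstein--Uhlenbeck dynamics in the eigenbasis $\{e_k\}$ of~$\cL$, bound each mode's stationary variance explicitly, and exploit the summability~\eqref{e:traceQ} together with a large damping~$\alpha$ via dominated convergence.

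First, because $\cL e_k = -\tfrac{k}{2} e_k$ and $W$ has the spectral expansion~\eqref{e:Wiener} with $b_0 = 0$, the explicit stationary solution of~\eqref{OU} reads
\[
\eta^\alpha(\tau,\xi) = \sum_{k=1}^\infty X_k^\alpha(\tau)\, e_k(\xi),
\qquad
X_k^\alpha(\tau) = b_k \int_{-\infty}^\tau e^{-(k/2 + \alpha)(\tau-s)}\, dw_k(s),
\]
where the scalar Ornstein--Uhlenbeck processes $X_k^\alpha$ are mutually independent centred Gaussians with stationary variance $\EX X_k^\alpha(\tau)^2 = b_k^2/(k + 2\alpha)$. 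Integration by parts using $K_\xi = \tfrac{\xi}{2} K$ yields the identity $\langle \cL u, u\rangle = -\|u_\xi\|^2_{L^2(K)} + \tfrac12 \|u\|^2_{L^2(K)}$, so combining $\langle \cL e_k, e_l\rangle = -\tfrac{k}{2}\delta_{kl}$ with $\langle e_k, e_l\rangle = \delta_{kl}$ shows that the gradients $(e_k)_\xi$ are orthogonal in $L^2(K)$ with $\|(e_k)_\xi\|^2_{L^2(K)} = (k+1)/2$. By Remark~\ref{rem:norm} there then exist constants $c_1,c_2 > 0$ such that every $u = \sum_{k\geq 0} u_k e_k \in H^1(K)$ satisfies
\[
c_1 \sum_{k\geq 0} (k+1) |u_k|^2 \;\leq\; \|u\|^2_{H^1(K)} \;\leq\; c_2 \sum_{k\geq 0} (k+1) |u_k|^2.
\]

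Second, applying this equivalence to $\eta^\alpha$ and using the independence of the $w_k$ gives
\[
\EX \|\eta^\alpha(\tau)\|^2_{H^1(K)} \;\leq\; c_2 \sum_{k=1}^\infty \frac{(k+1)\, b_k^2}{k + 2\alpha}.
\]
For every $\alpha \geq 1/2$ each summand is dominated by $b_k^2$, and tends pointwise to zero in~$k$ as $\alpha \to \infty$; the summability assumption~\eqref{e:traceQ} together with dominated convergence therefore yields $\EX \|\eta^\alpha(\tau)\|^2_{H^1(K)} \to 0$ as $\alpha \to \infty$. Since $\eta^\alpha(\tau)$ is a centred Gaussian random variable in the Hilbert space $H^1(K)$, Fernique's theorem supplies a constant~$C_p$ with
\[
\EX\|\eta^\alpha(\tau)\|^p_{H^1(K)} \;\leq\; C_p \bigl(\EX\|\eta^\alpha(\tau)\|^2_{H^1(K)}\bigr)^{p/2},
\]
so for given~$\e$ and~$p$ one simply chooses $\alpha$ large enough to make the right-hand side smaller than~$\e$.

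The main obstacle is the spectral identification of the $H^1(K)$-norm: one must verify that $\{e_k\}$ really is a basis of $H^1(K)$ with the stated weighting $(k+1)$. This rests on the self-adjointness of $\cL$ on $L^2(K)$ with compact resolvent (Lemma~\ref{lem:Kavian}) and the integration-by-parts identity above; once that is in place the proof reduces to dominated convergence for a scalar series and a standard Gaussian moment comparison.
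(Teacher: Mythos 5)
Your proposal is correct and follows essentially the same route as the paper: expand the stationary Ornstein--Uhlenbeck solution in the eigenbasis of~$\cL$, compute the per-mode stationary variances, use the trace condition~\eqref{e:traceQ} and the equivalence of the $H^1(K)$-norm with the spectrally weighted $\ell^2$-sum (the paper cites Lemma~\ref{lem:Kavian} and Remark~\ref{rem:norm} for this) to send the second moment to zero as $\alpha\to\infty$, and then invoke Gaussianity for the higher moments (the paper cites Da Prato--Zabczyk, you cite Fernique --- same mechanism). Your write-up is in fact more careful than the paper's, which states the mode variance with sign/exponent typos; your explicit $b_k^2/(k+2\alpha)$ and the orthogonality computation $\|(e_k)_\xi\|^2_{L^2(K)}=(k+1)/2$ are the correct versions of what the paper intends.
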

\begin{proof}
We just need to prove this result for $p=2j$\,, $j\geq 1$\,.

%Multiplying~$-\cL \eta^\alpha$ on both sides of~(\ref{OU}), by It\^o %formula we have
%\begin{eqnarray}\label{e:eta-H1}
%\rat12d\|\eta^\alpha\|^2_{H^1(K)}&=&
%\left[-\langle \cL \eta^\alpha, \cL \eta^\alpha \rangle+%%\alpha\langle\eta^\alpha, \cL \eta^\alpha \rangle+ \frac{1}{2}\text{tr}_K((-\cL )^{1/2} Q)\right]\,d\tau\nonumber\\&&{}\quad-\langle \sqrt{1}dW(\tau), \cL \eta^\alpha \rangle\,.
%\end{eqnarray}
%Then taking expectation on both sides of the above equation and by the stationary of~$\eta^\alpha$ we have
%\begin{equation*}
%\EX \|\eta^\alpha(\tau)\|^2_{H^1(K)}
%\leq \frac{1}{2\alpha}\text{tr}_K((-\cL )^{1/2} Q)
%\end{equation*}
%which yields the result for $q=1$\,. Next we consider~$q>1$\,. Applying It\^o formula to $[\|\eta^\alpha\|_{H^1(K)}^2]^q$\,, by equation~(\ref{e:eta-H1}) we have
First the unique stationary solution to equation~(\ref{OU}) is written as
%\marginpar{There seem to be some $q_k$'s here: are they $b_k$'s??}
\begin{equation*}
\eta^\alpha(\tau)=\sum_{k=1}^\infty
\sqrt{b_k}\int_{-\infty}^\tau e^{(-\lambda_k-\alpha)(\tau-s)}dw_k(s)e_k\,.
\end{equation*}
Then by the property of stochastic integral,
\begin{equation*}
\EX\left(\|\eta^\alpha_\xi(\tau)\|^2_{L^2(K)}  \right)=\sum_{k=1}^\infty \frac{b_k\lambda_k}{\lambda_k+\alpha}\,.
\end{equation*}
By the assumption~(\ref{e:traceQ}) on~$W(\tau, \xi)$,  Lemma~\ref{lem:Kavian} and Remark~\ref{rem:norm} we have the result for $j=1$ by choosing~$\alpha$ large enough. For $j>1$\,, the result is followed by the Guassian property of $\eta^\alpha$~\cite[Lemma 7.2]{Prato} .
\end{proof}
\begin{remark}
Here the $\alpha$ is chosen to be  large so that $\eta^\alpha$ is small.  Subsection~\ref{subsec:diffusion} shows,  for $\alpha=0$\,, that the process~$\eta^{\alpha}$ is the emergent stochastic slow manifold of linear system~(\ref{OU}) with $\alpha=0$ and describes the stochastic self similarity of the linear problem~(\ref{eq:prob}) with $f=0$ and $g=1/t$\,.
\end{remark}

Let $\eta^\alpha=\eta^\alpha_0+\eta^\alpha_s\in E_0\oplus E_s$\,, then by the assumption~(\ref{e:intW}), $\eta_0^\alpha=0$\,. By Lemma~\ref{lem:Kavian} and~\ref{lem:OU-H1}, we then have
\begin{coro}\label{coro:OU-L-infty}
For any $\e>0$, there exists $\alpha>0$ such that for any $p\geq 1$
\begin{equation*}
\EX\|\eta^\alpha(\tau)\|^p_{L^\infty(\R)}<\e\,.
\end{equation*}

\end{coro}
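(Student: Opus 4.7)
The plan is to reduce the $L^\infty$ estimate on~$\eta^\alpha$ to the $H^1(K)$ estimate already furnished by Lemma~\ref{lem:OU-H1}, via a deterministic Sobolev-type embedding, and then to extend the range of~$p$ by Jensen's inequality. First, since $K(\xi) = e^{\xi^2/4}\geq 1$ on~$\R$, I would use the trivial pointwise bound $|u(\xi)| \leq K(\xi)^{1/2}|u(\xi)|$ to conclude $\|u\|_{L^\infty(\R)} \leq \|K^{1/2}u\|_{L^\infty(\R)}$ for every measurable~$u$, reducing the task to controlling the weighted $L^\infty$ norm.

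Next, I would quantify the fifth part of Lemma~\ref{lem:Kavian}, which asserts $K^{1/2}u\in L^\infty(\R)$ whenever $u\in H^1(K)$. The closed graph theorem (or a direct computation using the identity $(K^{1/2}u)_\xi = \tfrac{\xi}{4}K^{1/2}u + K^{1/2}u_\xi$, combined with the standard one-dimensional embedding $H^1(\R)\hookrightarrow L^\infty(\R)$ and parts~2 and~3 of the same lemma to dominate the two terms) promotes this inclusion to a bounded linear embedding, yielding a constant $C>0$ independent of~$u$ such that
\begin{equation*}
\|u\|_{L^\infty(\R)} \leq C\|u\|_{H^1(K)} \qquad \text{for every } u\in H^1(K).
\end{equation*}

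For $p\geq 2$, I would raise to the $p$th power, take expectations, and invoke Lemma~\ref{lem:OU-H1} with tolerance $\e/C^p$ to select an $\alpha>0$ for which
\begin{equation*}
\EX\|\eta^\alpha(\tau)\|_{L^\infty(\R)}^p \leq C^p\,\EX\|\eta^\alpha(\tau)\|_{H^1(K)}^p < \e .
\end{equation*}
For $1\leq p<2$, I would apply Jensen's inequality, $\EX\|\eta^\alpha\|_{L^\infty(\R)}^p \leq \bigl(\EX\|\eta^\alpha\|_{L^\infty(\R)}^2\bigr)^{p/2}$, reducing to the $p=2$ case just handled, and then enlarge $\alpha$ so that the inner expectation lies below $\e^{2/p}$. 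I expect no serious obstacle: all the randomness has already been absorbed into Lemma~\ref{lem:OU-H1}, and what remains is a purely deterministic weighted Sobolev embedding; the only mild care needed is in identifying the embedding constant, but any finite value suffices since $\alpha$ may be taken arbitrarily large.
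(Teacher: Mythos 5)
Your proposal is correct and follows essentially the same route as the paper, which simply invokes part~5 of Lemma~\ref{lem:Kavian} (i.e.\ the embedding $H^1(K)\hookrightarrow L^\infty(\R)$ via $K\geq 1$ and $K^{1/2}u\in H^1(\R)$) together with Lemma~\ref{lem:OU-H1}, plus Jensen for $1\leq p<2$; you merely supply the quantitative details the paper leaves implicit. Note only that your $\alpha$ (like the paper's) ends up depending on~$p$ through the tolerance $\e/C^p$, which is consistent with the quantifier order in Lemma~\ref{lem:OU-H1} and is clearly what is intended here.
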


%%%%%%%%%%%%%%%%
%%%%%%%%%%%%%%%%
%%%%%%%%%%%%%%%%

\subsection{Existence of a stochastic  self-similar solution}\label{sec:selfsimilar}
For any $\tau>0$\,, in the mild sense equation~(\ref{e:tau-xi-Burgers}) is written as
\begin{equation}\label{e:mild}
u(\tau)=S(\tau)u^{0}+\int_0^\tau S(\tau-s)u(s)u_\xi(s)\,ds+\int_0^\tau S(\tau-s)\,dW(s).
\end{equation}
Then by the theory for abstract stochastic evolutionary equations~\cite{Prato} we have the following theorem.
\begin{theorem}\label{thm:wellpose}
For any $T>0$ and $u^0\in L^{2}(\Omega_{0}, L^2(K)\cap L^\infty(\R))$, there is a unique solution~$u(\tau, \xi)$ to \spde~(\ref{e:tau-xi-Burgers}) in the space 
$L^2(\Omega_0,C(0, T; L^2(K))\cap L^2(0, T; H^1(K)))$.
\end{theorem}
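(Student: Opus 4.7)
The plan is to reduce the stochastic problem to a pathwise PDE with random coefficients and then apply standard energy/Galerkin arguments in the weighted space. Introduce the stochastic convolution
\begin{equation*}
z(\tau)=\int_0^\tau S(\tau-s)\,dW(s),
\end{equation*}
and set $v=u-z$. Using the assumption~\eqref{e:traceQ} on the trace of $Q$ together with the explicit spectrum $\lambda_k=-k/2$ and the isometry property of the Wiener integral, $z$ has paths in $C(0,T;L^2(K))\cap L^2(0,T;H^1(K))$ almost surely, with second moments in these norms bounded uniformly in~$\tau\in[0,T]$; Lemma~\ref{lem:Kavian}\ref{i:6} and part~(5) further give $z\in L^2(\Omega_0,L^2(0,T;L^\infty(\R)))$. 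Then $v$ solves the pathwise random PDE
\begin{equation*}
v_\tau=\cL v-(v+z)\partial_\xi(v+z),\qquad v(0)=u^0,
\end{equation*}
which no longer involves a stochastic differential.

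Next I would construct $v$ by Galerkin approximation in the orthonormal eigenbasis $\{e_k\}$ of~$\cL$ on $L^2(K)$, yielding approximations $v^n\in\text{span}\{e_0,\dots,e_n\}$ satisfying an ODE system whose local solvability is immediate. The key a~priori estimate comes from testing against $v^n K$ and using the self-adjointness of~$\cL$ on $L^2(K)$ together with Lemma~\ref{lem:Kavian}: the linear part yields $\langle \cL v^n,v^n\rangle\le -\tfrac12\|v^n\|_{H^1(K)}^2$ modulo the slow mode, while the Burgers nonlinearity is handled by writing
\begin{equation*}
\langle v^n\partial_\xi v^n, v^n\rangle
=-\tfrac12\langle (v^n)^2, \partial_\xi K\rangle
+\tfrac13\langle\partial_\xi K,(v^n)^3\rangle^{\!*},
\end{equation*}
plus cross terms in~$z$, and controlling the mixed $z$--$v^n$ terms via Young's inequality together with the $L^\infty$-bound on $z$ and the Hardy-type inequality Lemma~\ref{lem:Kavian}(2). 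This produces a differential inequality of the form
\begin{equation*}
\tfrac{d}{d\tau}\|v^n\|_{L^2(K)}^2+\|v^n\|_{H^1(K)}^2
\le C\big(1+\|z\|_{L^\infty}^2+\|z\|_{H^1(K)}^2\big)\big(1+\|v^n\|_{L^2(K)}^2\big),
\end{equation*}
to which Gronwall's lemma applies pathwise, yielding uniform bounds of $v^n$ in $L^2(\Omega_0, C(0,T;L^2(K))\cap L^2(0,T;H^1(K)))$. Compactness of the embedding $H^1(K)\subset L^2(K)$ from Lemma~\ref{lem:Kavian}(1), together with an Aubin--Lions argument, gives a strongly convergent subsequence, and one passes to the limit in the nonlinear term to obtain a solution $v$ of the random PDE, hence $u=v+z$ solves the mild equation~\eqref{e:mild}.

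For uniqueness, I would take two solutions $u_1,u_2$ in the stated class, set $d=u_1-u_2$, and compute $\tfrac{d}{d\tau}\|d\|_{L^2(K)}^2$. The difference equation eliminates~$z$, leaving
\begin{equation*}
d_\tau=\cL d-\tfrac12\partial_\xi\big(d(u_1+u_2)\big),
\end{equation*}
and the dissipative term from~$\cL$ together with the $L^\infty(\R)$ control on $u_1,u_2$ (which propagates from the initial datum $u^0\in L^\infty(\R)$ via a maximum principle argument on the Burgers drift plus the $L^\infty$-bound on~$z$) yields a Gronwall inequality forcing $d\equiv0$. The main obstacle throughout is the combined challenge of the quadratic Burgers nonlinearity and the exponential weight~$K(\xi)=e^{\xi^2/4}$: the derivative $\partial_\xi K=\tfrac12\xi K$ grows, so terms of the form $\int v^2\xi K\,d\xi$ arising from integration by parts must be absorbed carefully using Lemma~\ref{lem:Kavian}(2), and the $L^\infty$ propagation needed for uniqueness relies on the regularity of $z$ provided by Corollary~\ref{coro:OU-L-infty}.
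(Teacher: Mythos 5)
Your overall strategy---subtract the stochastic convolution $z$ and treat the resulting pathwise random PDE by Galerkin/energy methods---is a legitimate alternative to what the paper actually does. The paper works with the mild formulation~\eqref{e:mild}, notes that $F(u)=uu_\xi$ is locally Lipschitz from $L^2(K)$ into $H^{-\gamma}(K)$, obtains a local solution by a cutoff plus the Banach fixed point theorem for stochastic evolution equations (Da Prato--Zabczyk), and then globalises using the $L^\infty(\R)$ and $H^1(K)$ a priori estimates of the subsequent subsections. Your route is closer in spirit to those later a priori estimates (which likewise subtract an Ornstein--Uhlenbeck-type process) than to the paper's well-posedness proof itself.

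There is, however, a genuine gap in your key energy estimate. In the weighted space the Burgers nonlinearity is \emph{not} conservative: integration by parts gives
$\int_\R (v^n)^2 v^n_\xi K\,d\xi=-\tfrac13\int_\R (v^n)^3K_\xi\,d\xi=-\tfrac16\int_\R (v^n)^3\xi K\,d\xi$,
a term \emph{cubic} in $v^n$ with a growing weight $\xi K$. The paper controls the analogous term (Section~2.2.2) only by combining part~\ref{i:6} of Lemma~\ref{lem:Kavian} with an $L^\infty(\R)$ bound on the solution; without such a bound the estimate closes at best as $\frac{d}{d\tau}\|v^n\|^2\le C(\cdots)(1+\|v^n\|^2)+C\|v^n\|^3$, which is superlinear and Gronwall does not give a global bound. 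Your claimed differential inequality, linear in $\|v^n\|^2_{L^2(K)}$ with a coefficient depending only on $z$, therefore does not follow from the computation you sketch. Moreover, the needed $L^\infty$ bound is \emph{not available at the level of spectral Galerkin approximations}: the sign-function/maximum-principle argument of Section~2.2.1 does not survive projection onto $\operatorname{span}\{e_0,\dots,e_n\}$. To repair the argument you must either (i) accept only local-in-time existence from the superlinear inequality and then continue the solution using the $L^\infty$ and $H^1(K)$ a priori bounds proved for the limit $u$ itself (which reproduces the paper's local-then-global structure), or (ii) replace spectral Galerkin by an approximation that is genuinely Lipschitz, i.e.\ the cutoff of the nonlinearity used in the paper. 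The same caveat applies to your uniqueness step: it requires $L^\infty(\R)$ control of \emph{every} solution in the stated class $L^2(\Omega_0,C(0,T;L^2(K))\cap L^2(0,T;H^1(K)))$, and membership in that class does not by itself provide it; this must be established first.
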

\begin{proof}
Notice that the nonlinearity $F(u)=uu_\xi$ is locally Lipschitz continuous from $L^2(K)\rightarrow H^{-\gamma}$ for some $0<\gamma<1$\,. Then by a cutoff technique we have the existence and uniqueness of local solution for some stopping time~$T$ by the classical application of Banach Fixed Point Theorem for stochastic evolutionary equation~\cite{Prato}.
The global existence and uniqueness is thus followed by the a priori estimates in following part. 
\end{proof}
 
To construct a stationary solution by Bogolyubov--Krylov method, we need the tightness of~$\{\cL (u(\tau,\xi))\}_{\tau>0}$ in the space~$L^2(K)$. By the compact embedding of~$H^1(K)\subset L^2(K)$, we just give the uniform estimates of~$\{u(\tau,\xi)\}_{\tau>0}$ in the space~$H^1(K)$. For this we need some estimate in the spaces~$L^\infty(\R)$ and~$L^2(K)$.

\subsubsection{Estimates in the space $L^\infty(\R)$}\label{sec:L-infty}
 
   We adapt 
the approach used by Zuazua~\cite{Zuazua94}  for a scalar convection-diffusion equations to give an $L^\infty(\R)$~estimate for solution to equation~(\ref{e:tau-xi-Burgers}) with initial value $u^0\in L^2(K)\cap L^\infty(\R)$.

For this we introduce
\begin{equation*}
\text{sgn}(u)^+=\begin{cases}
1,& u>0\,, \\ 0,&  u\leq 0\,;
\end{cases}
\quad\text{and}\quad
\text{sgn}(u)^-=\begin{cases}
1,& u<0\,, \\  0,&  u\geq 0\,.
\end{cases}
\end{equation*}
Then for $u\in L^2(\R)$ with $\nabla u(t)\in L^2(\R)$,
$\int_\R u_{\xi\xi}\phi(u)\,d\xi<0$
for any nondecreasing $\phi\in C^1(\R)$. By a density discussion  
$\int_\R u_{\xi\xi} \text{sgn}(u)^\pm\,d\xi\leq 0$\,.
Moreover, $\int_\R uu_\xi\text{sgn}(u)^\pm\,d\xi=0$
and $\int_\R (\xi u_\xi+u)\text{sgn}(u)^\pm\,d\xi=0$\,. Denote by $u^{\pm}=\text{sgn}(u)^{\pm} u$\,. 
Now let $m(\tau)=\|u^{0}\|_{L^{\infty}(\R)}+\|\eta(\tau)\|_{L^{\infty}(\R)}$ with $\eta(\tau)$ solves~(\ref{OU}) with $\alpha=0$\,. Then multiplying  $\text{sgn}(u-m)^+$ on both sides of~(\ref{e:tau-xi-Burgers}), and integrating on $\R\times [0, \tau]$ with $\tau>0$  
\begin{equation*}
\int_\R(u(\tau,\xi)-m)^+\,d\xi\leq 0\,.
\end{equation*}
Therefore $u(\tau,\xi)\leq m(\tau)$ for any $\tau>0$. Similarly 
$u(\tau,\xi)\geq -m(\tau)$\,, $\tau>0$.
Then  
$\|u(\tau)\|_{L^\infty(\R)}\leq m(\tau)$\,, for all $\tau>0$.
Moreover, by the property of $\eta^{\alpha}(\tau)$\,,  $\EX m(\tau)$ is bounded by  some positive constant.

\subsubsection{Estimates in the space  $H^1(K)$}
We first give estimates of solution $\{u(\tau,\xi)\}_{\tau>0}$ in the space~$L^2(K)$. 
 
First for~$u(\tau, \xi)$ solving~(\ref{e:tau-xi-Burgers}) with $u^{0}\in L^\infty(\R)\cap L^2(K)$ we decompose $u(\tau,\xi)=u_0(\tau, \xi)+u_s(\tau,\xi)$ with $u_0\in E_0$ and $u_s\in E_s$\,. Then by the assumption~(\ref{e:intW})
\begin{eqnarray*}
du_0&=&0\,,\\
du_s&=&\left[\cL  u_s-P_{s}(uu_\xi)\right]dt+dW_2\,.
\end{eqnarray*}
Then  
\begin{equation*}
u_0(\tau,\xi)=u_0(0, \xi)=\langle u(0), e_0\rangle e_0(\xi)=\int_\R u(0, \xi)\,d\xi e_0(\xi).
\end{equation*}
So $u_0(\tau, \xi)$ is totally determined by the mass of initial value which is denoted by $M=\int_\R u(0, \xi)\,d\xi$\,.

Now introduce $v=u-\eta^\alpha$\,, then we have the following random evolutionary equation
\begin{equation}\label{e:v}
v_\tau=\cL v-(v+\eta^\alpha)
(v+\eta^\alpha)_\xi+\alpha\eta^\alpha. \end{equation}
We first  give a uniform estimate on~$v(\tau)$ in the space~$L^2(K)$. Similarly write $v=v_0+v_s\in E_0\oplus E_s$\,, then
 \begin{equation*}
 v_0(\tau,\xi)=u_0(\tau,\xi)=Me_0(\xi).
 \end{equation*}
 Multiplying~$v$ in the space~$L^2(K)$ on both sides of the equation~(\ref{e:v}),  
\begin{equation*}
\rat12\frac{d}{d\tau}\|v_s\|^2_{L^2(K)}=
    -\rat12\|v_s\|^2_{H^1(K)}+\alpha\langle \eta_s^\alpha, v_s\rangle-\rat12\int_\R(v+\eta^\alpha)^2
\frac{\p}{\p\xi}(vK)\,d\xi\,.
\end{equation*}
Consider the third term on left hand side of the above equality   
\begin{eqnarray*}
\rat12\int_\R(v+\eta^\alpha)^2\frac{\p}{\p\xi}(vK)\,d\xi
&=&-\int_\R v^2v_\xi K\,d\xi-\int_\R v\eta^\alpha v_\xi K\,d\xi\\&&{}-\int_\R v^2\eta^\alpha_\xi K\,d\xi-\int_\R\eta^\alpha\eta^\alpha_\xi vK\,d\xi\,.
\end{eqnarray*}
We estimate the left four terms separately. By Cauchy inequality for any $\e>0$\,, there is positive constant~$C_\e$ such that
\begin{eqnarray*}
\left|\int_\R v\eta^\alpha v_\xi K\,d\xi\right|&\leq& \|\eta^\alpha\|_{L^\infty}\|v\|_{L^2(K)}
\|v_\xi\|_{L^2(K)}\\
&\leq&\e\|v_\xi\|^2_{L^2(K)}+C_\e\|\eta^\alpha\|^2_{L^\infty(\R)}
\|v\|^2_{L^2(K)}\,,\\
\left|\int_\R v^2\eta^\alpha_\xi K\,d\xi\right|&\leq&
\|v\|_{L^\infty}\|v\|_{L^2(K)}\|\eta^\alpha_\xi\|_{L^2(K)}
\\&\leq& \|\eta^\alpha\|_{H^1(K)} \|v\|^2_{L^2(K)}\,,\\
\left|\int_\R\eta^\alpha\eta^\alpha_\xi vK\,d\xi\right|&\leq& \|\eta^\alpha\|_{L^\infty(\R)}\|\eta^\alpha_\xi\|_{L^2(K)}\|v\|_{L^2(K)}\\
&\leq&C_\e\|\eta^\alpha\|^4_{H^1(K)}+\e\|v\|^2_{L^2(K)}\,.
\end{eqnarray*}
Integrating by parts  
\begin{equation*}
\int_\R v^2v_\xi K\,d\xi=\int_\R v^2K\,dv=-2\int_\R v^2v_\xi K\,d\xi-\int_\R v^3 K_\xi\,d\xi\,.
\end{equation*}
By property \ref{i:6} in  Lemma \ref{lem:Kavian}, for any $\e$\,, $\e'>0$, $q>2$\,, there exist positive constants $C_{\e}$\,, $C_{\e', q}$ and $R$ such that 
 \begin{eqnarray*}
\left|\int_{\R}v^{3}K_{\xi}\,d\xi\right|&=&\frac12\left|\int_{\R}v\xi K^{1/2} v^{2}K^{1/2}\,d\xi\right|\\
&\leq & \frac12\left[\int_{\R}v^{2}\xi^{2}K\,d\xi\right]^{1/2}\left[\int_{\R}v^{4}K\,d\xi\right]^{1/2}\\
&\leq& C\|v_{\xi}\|_{L^{2}(K)}\|v\|_{L^{2}(K)}\|v\|_{L^{\infty}(\R)}\\
&\leq&3\e C\|v_{\xi}\|^{2}_{L^{2}(K)}+3C_{\e}\left[\e'\|v_{\xi}\|^{2}_{L^{2}(K)}+C_{\e',q}\|v\|^{2}_{L^{q}(B(0, R))}\right]\|v\|^{2}_{L^{\infty}(\R)}\\&\leq &
3\left[\e C+\e' C_{\e}\|v\|^{2}_{L^{\infty}(\R)}\right]\|v_{\xi}\|^{2}_{L^{2}(K)}+3C_{\e}C_{\e', q, R}\|v\|^{4}_{L^{\infty}(\R)}
\end{eqnarray*}
with some positive constant $C_{\e', q, R}$\,,
then
\begin{eqnarray*}
\left|\int_\R v^2 v_\xi K\,d\xi\right|&=&\left|-\frac13\int_\R v^3 K_\xi\,d\xi\right|\\&\leq& \left[ \e C+ \e'C_{\e}\|v\|^{2}_{L^{\infty}(\R)}\right]\|v_{\xi}\|^{2}_{L^{2}(K)}+C_{\e}C_{\e', q, R}\|v\|^{4}_{L^{\infty}(\R)}\,.
\end{eqnarray*} 

%
%by~(\ref{e:???}) for any $\e>0$ and $q>2$\,, there are $R>0$\,, $C_\e>0$ and $C_{\e, q}>0$ such that
%\begin{eqnarray*}
%\int_\R v^2v_\xi K\,d\xi&\leq& \|v\|_{L^\infty}\int_\R vv_\xi K\,d\xi\\
%&\leq &\|v\|_{L^\infty}\|v_\xi\|_{L^2(K)}\|v\|_{L^2(K)}
%\\&\leq&\e\|v\|_{L^\infty}\|v_\xi\|^2_{L^2(K)}
%+C_\e\|v\|_{L^\infty}\|v\|^2_{L^2(K)}\\
%&\leq&\e\|v\|_{L^\infty}\|v_\xi\|^2_{L^2(K)}
%+C_\e\|v\|_{L^\infty}\left[\e\|v_\e\|^2_{L^2(K)}
%+C_{\e,q}\|v\|^2_{L^q(B(R))}\right]\\
%&\leq&\e(1+C_\e)\|v\|_{L^\infty}\|v_\xi\|^2_{L^2(K)}
%+C_{\e,q}\|v\|_{L^\infty}\|v\|^2_{L^q(B(0, R))}\\&\leq &
%\e(1+C_\e)\|v\|_{L^\infty}\|v_\xi\|^2_{L^2(K)}
%+C_{\e,q}\|v\|^3_{L^\infty}\,.
%\end{eqnarray*}
Then for any~$\e$ and $\e'>0$\,, there is a positive constant that we still denote by~$C_\e$ such that
\begin{eqnarray*}
&&\rat12\frac{d}{d\tau}\|v_s(\tau)\|^2_{L^2(K)}\\&\leq&
-\rat12\|v_s(\tau)\|^2_{H^1(K)}+\e\|v_s(\tau)\|^2_{L^2(K)}
+\alpha C_\e\|\eta_s^\alpha(\tau)\|^2_{L^2(K)}+
\e\|v\|^2_{H^1(K)}\\&&{}+ 
\left[\e C+ \e'C_{\e}\|v\|^{2}_{L^{\infty}(\R)}\right]\|v\|^{2}_{H^{1}(K)}+C_{\e}C_{\e', q, R}\|v\|^{4}_{L^{\infty}}
 \\&&{}+C_\e\|\eta^\alpha\|^2_{L^\infty(\R)}
\|v\|^2_{L^2(K)}+\|\eta^\alpha\|_{H^1(K)}\|v\|^2_{L^2(K)}
+\e\|v\|^2_{L^2(K)}+C_\e\|\eta^\alpha\|^4_{H^1(K)}\\
&\leq&\left[-\rat12+\e(1+C)+\e'C_{\e}\|v\|^{2}_{L^{\infty}(\R)}\right]\|v_s\|^2_{H^1(K)}
+\left[2\e +C_\e\|\eta^\alpha\|^2_{L^\infty}\right.\\&&{}\left.+\|\eta^\alpha\|_{H^1(K)}
\right]\|v_s\|^2_{L^2(K)}+\left[-\rat12+\e(1+C)+\e' C_{\e}\|v\|^{2}_{L^{\infty}(\R)}\right]\|v_0\|^2_{H^1(K)}\\&&{}
+\left[2\e +C_\e\|\eta^\alpha\|^2_{L^\infty(\R)}+\|\eta^\alpha\|_{H^1(K)}
\right]\|v_0\|^2_{L^2(K)}
\\&&{}+C_\e\|\eta^\alpha\|^4_{H^1(K)}
+\alpha C_\e\|\eta^\alpha\|^2_{L^2(K)}+C_\e C_{\e', q, R}\|v\|^4_{L^\infty(\R)}\,.
\end{eqnarray*}
Now choosing  $\e$ and $\e'>0$ small enough and noticing that
$\|v\|_{L^2(K)}\leq 2\|v\|_{H^1(K)}$\,,  
\begin{eqnarray*}
\frac{d}{d\tau}
\|v_s(\tau)\|^2_{L^2(K)}\leq
\left[-\rat12 +H(\tau,\omega)  \right]\|v_s(\tau)\|^2_{L^2(K)}
+h(\tau,\omega)
\end{eqnarray*}
where
\begin{equation*}
H(\tau,\omega)=2\left[\tfrac{\e}{2}(3+C)+\e'C_{\e}\|v\|^{2}_{L^{\infty}(\R)}+C_\e\|\eta^\alpha\|^2_{L^\infty(\R)}
+\|\eta^\alpha\|_{H^1(K)}\right]
\end{equation*}
and
\begin{eqnarray*}
h(\tau,\omega)&=&\left[-\rat12+H(\tau,\omega)  \right]M^2+\alpha C_\e\|\eta^\alpha(\tau)\|^2_{L^2(K)}
\\&&{}+C_\e\|\eta^\alpha(\tau)\|^4_{H^1(K)}+C_\e C_{\e', q, R}\|v\|^4_{L^\infty(\R)}\,.
\end{eqnarray*}
Then by the Gronwall inequality for any $\tau>0$
\begin{eqnarray}\label{e:est-v2}
\|v_s(\tau)\|^2_{L^2(K)}&\leq&e^{-\rat12\tau+
\int_0^\tau H(s)\,ds}
\|v(0)\|^2_{L^2(K)}\\
&&{}+\int_0^\tau e^{-\tfrac12(\tau-s)+
\int_s^\tau H(\varsigma)\,d\varsigma}h(s)\,ds\,.\nonumber
\end{eqnarray}
By the construction of~$\eta^\alpha(\tau,\omega)$ and  the estimate on $u(\tau)$ in $L^{\infty}(\R)$~(section \ref{sec:L-infty})\,, 
choose $\e$ and $\e'>0$ small enough there is a random variable~$\tau_0(\omega)$ such that for $\tau>\tau_0$ almost surely
\begin{equation*}
\exp\left\{-\tfrac12\tau+\int_0^\tau H(s)\,ds\right\}\leq \exp\left\{-\tfrac14\tau\right\}\,.
\end{equation*}
Then  $ v(\tau)$~is time uniformly bounded by a tempered random variable~$R_1(\theta_\tau\omega)$ in the space~$L^2(K)$.

\begin{remark}
Notice that~$\eta^\alpha$  is  very small in~$L^2(\Omega_0, H^1(K))$ by choosing large $\alpha>0$ (Lemma~\ref{lem:OU-H1}). Then by the definition of~$h$ and estimates of~$u(\tau, \xi)$ in the space~$L^\infty(\R)$,  for small initial value $u_0\in L^2(K)\cap L^\infty(\R)$,  $h$~is small  in~$L^2(\Omega_0)$.  Then by estimate~(\ref{e:est-v2}) for $\tau>\tau_0$ is large, $v(\tau)$~is small in the space~$L^2(K)$ almost surely.
\end{remark}

Now given any $\tau_1>0$\,, in the mild sense  
\begin{eqnarray*}
v(\tau+\tau_1)&=&S(\tau)v(\tau_1)+\tfrac12\int_0^\tau S(\tau-\sigma)v(\tau_1+\sigma)\,d\sigma\\&&{}
+\int_0^\tau S(\tau-\sigma)(v(\tau_1+\sigma)+
\eta^\alpha(\tau_1+\sigma))\\&&\quad{}\times(v(\tau_1+\sigma)
+\eta^\alpha(\tau_1+\sigma))_\xi\,d\sigma\\&&{}
+\alpha\int_0^\tau S(\tau-\sigma)\eta^\alpha(\sigma)\,d\sigma\,.
\end{eqnarray*}
Then taking $H^1(K)$~norm in the above equation, by the growth of the semigroup, for some positive constant~$C$
\begin{eqnarray*}
&&\|v(\tau+\tau_1)\|_{H^1(K)}\\&\leq& C(1+\frac{1}{\sqrt{\tau}})\|v(\tau_1)\|_{L^2(K)}
+\int_0^\tau(1+\frac{1}{\sqrt{\tau-\sigma}})e^{-\tau/2}
R_1(\theta_\sigma\omega)\,d\sigma
\\&&{}+C\int_0^\tau(1+\frac{1}{\sqrt{\tau-\sigma}})
e^{-\tau/2}R_1(\theta_\sigma\omega)
\|v(\tau_1+\sigma)\|_{H^1(K)}\,d\sigma\\
&&{}+C\int_0^\tau(1+\frac{1}{\sqrt{\tau-\sigma}})
e^{-\tau/2}(1+R_1(\theta_\sigma\omega))\eta^\alpha(\theta_\sigma\omega)\,d\sigma\,.
\end{eqnarray*}
By Gronwall lemma, and the tempered property of~$R_1(\theta_\tau\omega)$, there is  a  tempered random variable~$R_2(\theta_\tau\omega)$ such that
\begin{equation*}
\|v(\tau+\tau_1)\|_{H^1(K)}\leq CR_2(\theta_\tau\omega)(1+\frac{1}{\sqrt{\tau}})\|v(\tau_1)\|_{L^2(K)} +R_2(\theta_\tau\omega)
\end{equation*}
for any $\tau_1>0$\,. Then we have the uniform estimate of~$v(\tau)$ in the space~$H^1(K)$ and the compact embedding of $H^1(K)\subset L^2(K)$ with the property of~$\eta^\alpha$ yields  the tightness of~$\{\cL (u(\tau))\}_{\tau\geq 0}$, the laws of~$u(\tau)$,  in the space~$L^2(K)$.  Then the classical Bogolyubov--Krylov method~\cite{Arnold03} yields the existence of an invariant measure denoted by~$\mu$. 

 Now choose random variable~$u^{0}$ with $\cL (u^{0})=\mu$\,, then the solution $\bar{u}(\xi,\omega)$ with initial value $u^{0}$ is a stationary solution to~(\ref{e:tau-xi-Burgers}).  By the self similar transformation for $t\geq 1$\,,
\begin{equation*}
\bar{\fu }(t, x,\omega)=\frac{1}{\sqrt{t}}\bar{u}\left(x/\sqrt{t}, \omega\right)=\frac{1}{\sqrt{t}}\bar{u}\left(x/\sqrt{t},\omega\right)
\end{equation*}
is a random self-similar solution to stochastic Burgers' equation~(\ref{e:txBurgers}).
\begin{remark}
By the stationary property of~$\bar{u}$,  $\sqrt{t}\bar{\fu }(t,x,\omega)$  just depends on $\xi=x/\sqrt{t}$ in the sense of distribution. So $\bar{\fu }(t,x,\omega)$ is called the random  self-similar solution to stochastic Burgers' equation~(\ref{e:txBurgers}).
\end{remark}

\subsection{Locally asymptotic convergence to self-similar solutions}
Self-similar solutions are important in study the dynamics of the system~(\ref{e:txBurgers}). Next we show that  solution~$u(\tau, \xi)$ to equation~(\ref{e:tau-xi-Burgers})  tends to  a unique stationary solution~$\bar{u}(\tau, \xi)$ as $\tau\rightarrow\infty$ under some conditions. This shows the asymptotical convergence of stochastic Burgers' equation~(\ref{e:txBurgers}) to the self-similar solution.

First we have the following result on the stationary solution to equation~(\ref{e:tau-xi-Burgers}).
\begin{lemma}\label{lem:unique-stationary}
Any stationary solution, which is small in~$L^2(\Omega_0, L^\infty(\R)\cap L^2(K))$, to~(\ref{e:tau-xi-Burgers}) is uniquely determined by it's mass,  that is for any given $M\in\R$\,, there is a unique stationary solution~$\bar{u}(\tau, \xi)$ to~(\ref{e:tau-xi-Burgers}) with
\begin{equation}\label{e:M}
\int_\R\bar{u}(\tau,\xi)\,d\xi=M\,.
\end{equation}
\end{lemma}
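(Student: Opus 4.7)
The plan is to apply a Cole--Hopf transformation that converts (\ref{e:tau-xi-Burgers}) into a linear SPDE, and then read off uniqueness from the classification of bounded stationary solutions of the linear equation. This is the strategy already announced at the start of this subsection.

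First, since the noise is conservative by (\ref{e:intW}), the Wiener increment admits a spatial antiderivative, $dW = -\partial_\xi\,dZ$. Substituting $\bar u = -2\partial_\xi \log\phi$ into (\ref{e:tau-xi-Burgers}) and applying the Stratonovich chain rule (so ordinary calculus applies) converts $\bar u\bar u_\xi = \tfrac12\partial_\xi \bar u^2$ into a perfect $\xi$-derivative, and a direct computation yields the linear multiplicative-noise SPDE
$$d\phi = \bigl(\phi_{\xi\xi}+\tfrac12\xi\phi_\xi\bigr)\,d\tau + \tfrac12\phi\circ dZ$$
for the auxiliary field $\phi$. The smallness of $\bar u$ in $L^2(\Omega_0, L^\infty(\R)\cap L^2(K))$ ensures that $\log\phi$ has small gradient, so $\phi$ stays uniformly close to a positive constant and the correspondence $\bar u \leftrightarrow \phi$ is genuinely invertible on~$\R$.

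Next, I would classify the stationary solutions of the linear equation. The gauge transformation $\phi = e^{Z/2}\psi$ absorbs the multiplicative noise (by the Stratonovich product rule) and reduces the problem to a random-coefficient parabolic equation with no independent noise but the same principal part $\partial_{\xi\xi}+\tfrac12\xi\partial_\xi$. In the deterministic case ($Z\equiv 0$) its bounded stationary solutions form the two-parameter family $\phi(\xi) = D + C\int_0^\xi e^{-s^2/4}\,ds$; quotienting by the global rescaling $\phi\mapsto c\phi$ (which leaves $\bar u$ unchanged) leaves a genuine one-parameter family of stationary $\bar u$'s, and the multiplicative-noise case is a perturbation that retains this one-parameter structure. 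Fixing the remaining parameter by the prescribed mass is then immediate: integrating gives
$$\int_\R \bar u\,d\xi = -2\bigl[\log\phi(+\infty) - \log\phi(-\infty)\bigr],$$
which in the small regime is a smooth, strictly monotone function of the single free parameter, so (\ref{e:M}) pins it down uniquely, and inverting Cole--Hopf recovers a unique stationary $\bar u$.

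The main obstacle will be rigorously justifying the gauge transformation in the Stratonovich $Q$-Wiener setting and verifying that the resulting random linear operator has precisely the expected one-dimensional stationary kernel modulo global scaling. The smallness hypothesis on~$\bar u$ is essential at exactly this stage, since it is what keeps $\phi$ bounded away from both $0$ and $\infty$, ensuring that $\log\phi$ is well defined globally on~$\R$ and that the map from the free parameter to the total mass stays invertible.
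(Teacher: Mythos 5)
Your overall strategy (Cole--Hopf linearisation) is the one the paper announces at the start of this subsection, but you apply it at the wrong place, and that is where the argument has a genuine gap. The paper does \emph{not} Cole--Hopf the full SPDE. It takes two small stationary solutions $\bar u_1,\bar u_2$ with the same mass, forms $U=\bar u_1-\bar u_2$, and observes that the additive noise cancels, leaving the random PDE $U_\tau=\cL U-(\bar u_1U)_\xi-UU_\xi$ with no independent noise at all. The Cole--Hopf substitution $V=U\exp\{-\tfrac12\int_{-\infty}^\xi U(y)\,dy\}$ then yields the linear equation $V_\tau=\cL V-(\bar u_1V)_\xi$ with $V\in E_s$ (because the masses agree), and a single energy estimate in $L^2(K)$ --- using the spectral gap $\langle \cL V,V\rangle\le-\tfrac12\|V\|^2_{H^1(K)}$ on $E_s$ and the smallness of $\|\bar u_1\|_{L^\infty(\R)}$ to absorb the advection terms --- gives $\tfrac{d}{d\tau}\|V\|^2_{L^2(K)}\le(-\tfrac12+\tfrac32\e+3C_\e\|\bar u_1\|^2_{L^\infty(\R)})\|V\|^2_{L^2(K)}$, hence exponential decay of $V$, which contradicts stationarity unless $V\equiv0$.

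Your route instead transforms the full equation into a linear multiplicative-noise SPDE for $\phi$ and then must classify \emph{all} of its stationary solutions. That classification is the entire content of the lemma, and you do not supply it: the assertion that ``the multiplicative-noise case is a perturbation that retains this one-parameter structure'' is unsupported. The hypothesis gives smallness of the stationary solution $\bar u$, not of the noise $W$, so there is no small parameter licensing a perturbation off the deterministic family $D+C\int_0^\xi e^{-s^2/4}\,ds$. Two further obstructions: the gauge factor $e^{Z/2}$ is $\xi$-dependent (since $Z$ is a spatial antiderivative of $W$), so it does not commute with $\partial_{\xi\xi}+\tfrac12\xi\partial_\xi$ and does not simply absorb the noise; and stationarity of $\bar u$ does not translate into stationarity of $\phi$ in any evident sense, since the noise accumulated in $\log\phi$ is of Wiener (non-stationary) type. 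The repair is precisely the paper's ordering of steps: difference two candidate solutions first, so the noise disappears, and only then apply Cole--Hopf to the difference.
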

\begin{proof}
Suppose $\bar{u}_1$ and~$\bar{u}_2$ are two stationary solutions to~(\ref{e:tau-xi-Burgers}) with
\begin{equation}\label{e:u1=u2}
\int_\R\bar{u}_1(\tau,\xi)\,d\xi=
\int_\R\bar{u}_2(\tau,\xi)\,d\xi=M\,.
\end{equation}
Let $U=\bar{u}_1-\bar{u}_2$\,, then
\begin{equation*}
U_\tau=\cL U-(\bar{u}_1U)_\xi-UU_\xi\,.
\end{equation*}
By the Cole--Hopf transformation
\begin{equation}\label{e:ColeHopf}
V=U\exp\left\{-\tfrac12\int_{-\infty}^\xi U(y)\,dy\right\}
\end{equation}
we have
\begin{equation}\label{e:VV}
V_\tau=\cL V-(\bar{u}_1V)_\xi\,.
\end{equation}
Notice~(\ref{e:u1=u2}),  $V$ has zero projection to~$E_0$, that is $V\in E_s$\,. Multiplying~$V$ on both sides of~(\ref{e:VV}) in~$L^2(K)$  
\begin{equation*}
\rat12\frac{d}{dt}\|V\|^2_{L^2(K)}=-\rat12\|V\|^2_{H^1(K)}
-\int_\R\bar{u}_1 VV_\xi K\,d\xi-\rat12
\int_\R\bar{u}_1 V^2\xi K\,d\xi\,.
\end{equation*}
For the last two terms by Cauchy inequality for any $\e>0$\,, for some positive constant~$C_\e$
\begin{equation*}
\left|\int_\R\bar{u}_1 VV_\xi K\,d\xi\right|\leq
C_\e\|\bar{u}_1\|^2_{L^\infty(\R)}\|V\|^2_{L^2(K)}+\e \|V_\xi\|^2_{L^2(K)}
\end{equation*}
and
\begin{eqnarray*}
\left|\int_\R\bar{u}_1 V^2\xi K\,d\xi\right|
&\leq &\|\bar{u}_1\|_{L^\infty(\R)}\int_\R V^2\xi K\,d\xi\\
&\leq&\|\bar{u}_1\|_{L^\infty(\R)}\left[\int_\R V^2K\,d\xi\right]^{\rat12}\left[
\int_\R V^2\xi^2K\,d\xi\right]^{\rat12}\\
&\leq& C_\e\|\bar{u}_1\|^2_{L^\infty(\R)}\|V\|^2_{L^2(K)}
+\e\|V_\xi\|^2_{L^2(K)}\,.
\end{eqnarray*}
Then by $\|V\|_{L^2(K)}\leq 2\|V\|_{H^1(K)}$,  for small~$\e$
\begin{equation*}
\frac{d}{dt}\|V\|^2_{L^2(K)}
=\left(-\rat12+\rat32\e+3C_\e\|\bar{u}_1\|^2_{L^\infty(\R)}
\right)\|V\|^2_{L^2(K)}\,.
\end{equation*}
Now for small~$\bar{u}_1$ in~$L^2(\Omega_0, L^\infty(\R)\cap L^2(K))$, by the stationary property and the Gronwall inequality,   almost surely
\begin{equation*}
V(\tau)\rightarrow 0\,,\quad \tau\rightarrow \infty
\end{equation*}
which yields the uniqueness of the stationary solution satisfying~(\ref{e:M}). The proof is complete.
\end{proof}

By the above result we show that the long time behavior of some solution of~(\ref{e:tau-xi-Burgers}) is approximated by a unique stationary solution. For this we first show that the stationary solution~$\bar{u}$ constructed by Bogolyubov--Krylov method is bounded by  the bound of~$u(\tau,\xi)$.

\begin{lemma}\label{lem:baru-u}
For any solution~$u(\tau, \xi)$ to equation~(\ref{e:tau-xi-Burgers}) with initial value $u^{0}\in L^2(K)\cap L^\infty(\R)$, and  
 $\EX\|u(\tau,\xi)\|^2_{L^2(K)\cap L^\infty(\R)}\leq C$ for  some $\tau$-independent  positive constant~$C$, then there is a stationary solution~$\bar{u}$ such that 
\begin{equation*}
\int_\R\bar{u}(\xi)\,d\xi=\int_\R u(\tau,\xi)\,d\xi
\quad\text{and}\quad
\EX\|\bar{u}\|^2_{L^2(K)\cap L^\infty(\R)}\leq C\,.
\end{equation*}

\end{lemma}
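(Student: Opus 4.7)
The plan is to reuse the Bogolyubov--Krylov construction of subsection~\ref{sec:selfsimilar}, but applied to the specific trajectory $u(\tau,\xi)$ in the statement, and then to verify that both the prescribed mass and the prescribed bound pass to the limiting invariant measure. First I would record the mass-conservation identity that has already appeared in the preceding analysis: decomposing $u=u_0+u_s$ with respect to $E_0\oplus E_s$, the $E_0$-component satisfies $du_0=0$ so that $\int_\R u(\tau,\xi)\,d\xi=\int_\R u^0(\xi)\,d\xi=:M$ for every $\tau\ge 0$\,.

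Next I would form the time-averaged laws $\mu_T=\frac1T\int_0^T\cL(u(\tau))\,d\tau$. The hypothesis $\EX\|u(\tau)\|^2_{L^2(K)\cap L^\infty(\R)}\le C$ together with the $L^\infty$-estimate of subsection~\ref{sec:L-infty} and the propagation-of-regularity argument already performed for $v=u-\eta^\alpha$ (specifically the mild-formulation bound that upgrades an $L^2(K)$ bound to an $H^1(K)$ bound via the analytic semigroup) yields a $\tau$-uniform bound $\sup_\tau\EX\|u(\tau)\|^2_{H^1(K)}\le C'$. Because $H^1(K)\hookrightarrow L^2(K)$ compactly (Lemma~\ref{lem:Kavian}), the family $\{\mu_T\}$ is tight on $L^2(K)$, and Prohorov together with Bogolyubov--Krylov produces a subsequence $\mu_{T_n}\rightharpoonup\mu$ with $\mu$ invariant under the Markov semigroup of~(\ref{e:tau-xi-Burgers}). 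Take $\bar u$ to be a stationary solution with $\cL(\bar u)=\mu$.

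To check the mass, note that the linear functional $\ell(u)=\int_\R u\,d\xi$ is continuous on $L^2(K)$: by Cauchy--Schwarz
\begin{equation*}
|\ell(u)|=\left|\int_\R u\,K^{1/2}\,K^{-1/2}\,d\xi\right|\le \|u\|_{L^2(K)}\bigl(2\sqrt\pi\bigr)^{1/2},
\end{equation*}
since $\int_\R K^{-1}\,d\xi=\int_\R e^{-\xi^2/4}\,d\xi=2\sqrt\pi$\,. Hence the closed hyperplane $\{u:\ell(u)=M\}$ contains the support of every $\mu_{T_n}$ (by mass conservation), and therefore also contains the support of the weak limit $\mu$; this gives $\int_\R \bar u\,d\xi=M$. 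For the norm bound, the functional $u\mapsto\|u\|^2_{L^2(K)}$ is weakly lower semicontinuous on $L^2(K)$, while $\|\cdot\|_{L^\infty(\R)}$ is lower semicontinuous along a.e.-convergent subsequences (extracted from the $L^2(K)$-convergent ones). Combining with Fatou applied to the Portmanteau inequality yields
\begin{equation*}
\EX\|\bar u\|^2_{L^2(K)\cap L^\infty(\R)}\le\liminf_{n\to\infty}\int\|u\|^2_{L^2(K)\cap L^\infty(\R)}\,d\mu_{T_n}\le \sup_{\tau\ge 0}\EX\|u(\tau)\|^2_{L^2(K)\cap L^\infty(\R)}\le C.
\end{equation*}

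The step I expect to need the most care is the last one: weak convergence of measures on $L^2(K)$ does not automatically control $L^\infty(\R)$-norms, because $L^\infty$ is not dual to a separable space in a useful way here. The resolution is to exploit the fact that the $L^2(K)$-tight family is supported on bounded sets of $H^1(K)$, invoke the embedding $K^{1/2}u\in L^\infty(\R)$ from Lemma~\ref{lem:Kavian}, and pass to an a.e.-convergent subsequence to apply the standard lower semicontinuity of $\operatorname{ess\,sup}$ before averaging; everything else (mass conservation, invariance, tightness) is a direct rerun of the machinery already developed in subsection~\ref{sec:selfsimilar}.
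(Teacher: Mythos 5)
Your proposal is correct and uses the same Bogolyubov--Krylov skeleton as the paper (time-averaged laws $\bar\mu_{\tau}$, tightness from the $H^1(K)$ estimates and the compact embedding $H^1(K)\subset L^2(K)$, a weak limit point $\bar\mu$ that is invariant), but the way you transfer the bound $C$ to the stationary solution is genuinely different. The paper writes $\EX\|\bar u\|^2_{L^2(K)}=\int\|\tilde u(\tau,\cdot;v)\|^2_{L^2(K)}\,\bar\mu(dv)$ using invariance of $\bar\mu$ under the Markov semigroup, then unwinds $\bar\mu$ as the Ces\`aro limit of the laws $\mu_s=\cL(u(s))$ and uses the Markov property $\EX\|\tilde u(\tau,\cdot;u(s,\cdot))\|^2=\EX\|u(\tau+s,\cdot)\|^2\le C$; the bound is thus transported \emph{forward through the flow}. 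You instead apply the Portmanteau inequality for nonnegative lower semicontinuous functionals directly to the weak limit $\bar\mu_{T_n}\rightharpoonup\bar\mu$, getting $\int F\,d\bar\mu\le\liminf_n\int F\,d\bar\mu_{T_n}\le C$ with $F=\|\cdot\|^2_{L^2(K)\cap L^\infty(\R)}$. Your route buys two things: it avoids the interchange of limit and integral of an unbounded functional against weakly converging measures that the paper performs without comment (in your version this is handled cleanly by truncating $F\wedge N$ and monotone convergence), and it forces you to confront the $L^\infty$ issue head on --- you correctly note that $\|\cdot\|_{L^\infty(\R)}$ is sequentially lower semicontinuous along $L^2(K)$-convergent sequences via an a.e.\ subsequence, which is exactly what justifies the paper's one-word ``Similar'' for the $L^\infty$ bound. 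You also supply the verification of the mass identity $\int_\R\bar u\,d\xi=M$ (continuity of $u\mapsto\int_\R u\,d\xi$ on $L^2(K)$ by Cauchy--Schwarz against $K^{-1/2}$, so the closed hyperplane $\{\ell=M\}$ carries every $\bar\mu_{T_n}$ and hence $\bar\mu$), a step the paper's proof states in the lemma but never actually argues. The only caveat is your intermediate claim of a $\tau$-uniform bound $\sup_\tau\EX\|u(\tau)\|^2_{H^1(K)}\le C'$: the paper's $H^1(K)$ control is pathwise via tempered random variables rather than in expectation, but since all you need from it is tightness of $\{\bar\mu_T\}$ in $L^2(K)$, which the paper's estimates already deliver, this does not affect the argument.
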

\begin{proof}
Denote by $\mu_s=\cL u(s)$ the distribution of~$u(s,\cdot)$ in the space~$L^2(K)$. The Bogolyubov--Krylov method introduces the following probability measure on~$L^2(K)$,
\begin{equation*}
\bar{\mu}_\tau=\frac{1}{\tau}\int_0^\tau \mu_s\,ds \,,
\end{equation*}
and finds a limit point of~$\{\bar{\mu}_\tau\}_{\tau>0}$\,.
By the estimates of solution in the space~$H^1(K)$, $\{\bar{\mu}_\tau\}_{\tau>0}$ is tight in the space~$L^2(K)$, then there is a probability measure~$\bar{\mu}$ on the space~$L^2(K)$ and  subsequence~$\tau_n$ with $\tau_n\rightarrow\infty$\,, $n\rightarrow\infty$\,,  such that 
\begin{equation*}
\bar{\mu}_{\tau_n}\rightarrow \bar{\mu}\,,\quad n\rightarrow\infty
\end{equation*}
in weak sense~\cite{Bill} .
Now let~$\bar{u}$ be the stationary solution to equation~(\ref{e:tau-xi-Burgers}) with initial distribution~$\bar{\mu}$, then for for any $\tau>0$\,,
\begin{equation*}
\EX\|\bar{u}\|^2_{L^2(K)}=\int_{L^2(K)}\|\tilde{u}(\tau,\xi; v)\|^2_{L^2(K)}\,\bar{\mu}(dv)
\end{equation*}
wherever $\tilde{u}(\cdot,\xi; v)$ is the solution to equation~(\ref{e:tau-xi-Burgers}) with initial value $v\in L^2(K)$. 
By the construction of~$\bar{\mu}$   
\begin{eqnarray*}
\EX\|\bar{u}\|^2_{L^2(K)}&=&\lim_{n\rightarrow\infty} \frac{1}{\tau_n}\int_0^{\tau_n} \int_{L^2(K)}\|\tilde{u}(\tau,\xi; v)\|^2_{L^2(K)}\,\mu_s(dv)\,ds\\
&\leq&\lim_{n\rightarrow\infty} \frac{1}{\tau_n}\int_0^{\tau_n} \EX\|\tilde{u}(\tau,\xi;u(s,\xi))\|^2_{L^2(K)}\,ds\\
&\leq &C\,.
\end{eqnarray*}
Similar  $\EX\|\bar{u}\|^2_{L^\infty(\R)}\leq C$\,.
The proof is complete.
\end{proof}
Now we prove the following local asymptotical convergence to self-similar solution.  

\begin{theorem}\label{thm:u-ubar}
For any solution~$u(\tau, \xi)$, which is small in~$L^2(\Omega_0, L^\infty(\R)\cap L^2(K))$ for any $\tau>0$\,, there is a unique stationary solution~$\bar{u}(\tau, \xi)$ such that almost surely 
\begin{equation*}
\|u(\tau)-\bar{u}(\tau)\|_{L^{2}(K)}\rightarrow 0\,,\quad \tau\rightarrow\infty\,.
\end{equation*}
\end{theorem}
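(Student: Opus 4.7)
The plan is to reduce this theorem to the energy estimate already proved in Lemma~\ref{lem:unique-stationary}. The crucial observation is that because both $u$ and the yet-to-be-chosen $\bar{u}$ are driven by the same Wiener process~$W$, the noise cancels in the difference $U = u - \bar{u}$, which then satisfies a pathwise equation of exactly the form treated in Lemma~\ref{lem:unique-stationary}. To set this up, I first apply Lemma~\ref{lem:baru-u} to the given small solution~$u$, obtaining a stationary solution~$\bar{u}$ with $\int_\R \bar{u}\,d\xi = \int_\R u(\tau,\xi)\,d\xi =: M$ and $\EX\|\bar{u}\|^2_{L^2(K)\cap L^\infty(\R)}$ bounded by the same small constant as~$u$; Lemma~\ref{lem:unique-stationary} then pins $\bar{u}$ down as the unique small stationary solution with mass~$M$.

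Setting $U = u - \bar{u}$ and using $uu_\xi - \bar{u}\bar{u}_\xi = (\bar{u}U)_\xi + UU_\xi$, subtracting the two mild formulations kills the stochastic integral and leaves the pathwise equation
\begin{equation*}
U_\tau = \cL U - (\bar{u}U)_\xi - UU_\xi, \qquad \int_\R U(\tau,\xi)\,d\xi = 0,
\end{equation*}
which is identical to the one analysed in Lemma~\ref{lem:unique-stationary}. I then apply the Cole--Hopf transform~\eqref{e:ColeHopf}, $V = U\exp\{-\tfrac12\int_{-\infty}^\xi U(y)\,dy\}$, linearising the equation to $V_\tau = \cL V - (\bar{u}V)_\xi$; writing $V = -2\,\p_\xi\exp\{-\tfrac12\int_{-\infty}^\xi U\}$ and using $\int_\R U = 0$ shows $\int_\R V\,d\xi = 0$, so $V(\tau,\cdot)\in E_s$.

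Repeating the $L^2(K)$ energy calculation of Lemma~\ref{lem:unique-stationary} verbatim then yields, for any $\e>0$,
\begin{equation*}
\frac{d}{d\tau}\|V\|^2_{L^2(K)} \le \bigl(-\tfrac12 + \tfrac32\e + 3C_\e\|\bar{u}(\tau)\|^2_{L^\infty(\R)}\bigr)\|V\|^2_{L^2(K)}.
\end{equation*}
Because $\bar{u}$ is stationary and small in $L^2(\Omega_0, L^\infty(\R))$, Birkhoff's ergodic theorem applied to the stationary process $\|\bar{u}(\tau)\|^2_{L^\infty(\R)}$ gives $\tau^{-1}\int_0^\tau \|\bar{u}(s)\|^2_{L^\infty}\,ds \to \EX\|\bar{u}\|^2_{L^\infty}$ almost surely, and the latter expectation is arbitrarily small. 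Choosing $\e$ small, Gronwall then forces $\|V(\tau)\|_{L^2(K)} \to 0$ a.s. The Cole--Hopf map is inverted by the weight estimate $\|U\|_{L^1(\R)} \le C\|U\|_{L^2(K)}$ (from $K^{-1/2}\in L^2(\R)$): this makes $\int_{-\infty}^\xi U\,dy$ uniformly small whenever $\|U\|_{L^2(K)}$ is small, so $\phi := \exp\{-\tfrac12\int_{-\infty}^\xi U\}$ stays bounded away from $0$ uniformly, and $\|U\|_{L^2(K)} \le (\inf\phi)^{-1}\|V\|_{L^2(K)} \to 0$ almost surely.

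The main obstacle is the almost-sure step in the Gronwall argument: the coefficient in the decay estimate contains the \emph{random} quantity $\|\bar{u}(\tau)\|^2_{L^\infty(\R)}$, whereas Lemmas~\ref{lem:baru-u} and~\ref{lem:OU-H1} only furnish control in expectation. What rescues the argument is stationarity of~$\bar{u}$, which converts $L^2(\Omega_0)$-smallness into almost-sure smallness of the \emph{time average}, via the ergodic theorem. If one is content with convergence in probability (the theme of the subsequent subsection~\ref{ss:gacsss}), this step simplifies to a Chebyshev estimate of the integrated exponent.
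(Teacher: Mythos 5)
Your proposal is correct and follows essentially the same route as the paper's proof: construct $\bar u$ via Lemma~\ref{lem:baru-u} with the same mass as $u$, pin it down by Lemma~\ref{lem:unique-stationary}, apply the Cole--Hopf transform to the difference so the noise cancels, and run the same $L^2(K)$ energy estimate on $V\in E_s$. You are in fact more explicit than the paper at the two points it glosses over --- converting the $L^2(\Omega_0)$-smallness of $\|\bar u(\tau)\|^2_{L^\infty(\R)}$ into almost-sure control of the integrated Gronwall exponent via the ergodic theorem, and inverting the Cole--Hopf map to pass from $\|V\|_{L^2(K)}\to 0$ back to $\|U\|_{L^2(K)}\to 0$ --- where the paper simply asserts the conclusion ``by the stationary property.''
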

\begin{proof}
Assume for any $\tau>0$\,,  $\EX\|u(\tau,\xi)\|^2_{L^2(K)}\leq C$  for some positive constant~$C$.
Let
\begin{equation*}
M=\int_\R u(\tau, \xi)\,d\xi\,,
\end{equation*}
then by Lemma~\ref{lem:unique-stationary} and Lemma~\ref{lem:baru-u}\,, there is a unique stationary solution, denoted by~$\bar{u}(\tau, \xi)$,  satisfies
\begin{equation*}
\int_\R\bar{u}(\tau,\xi)\,d\xi=M\,.
\end{equation*}
Moreover,  $\bar{u}$ is small in the space  $L^2(\Omega_0, L^2(K)\cap L^\infty(\R))$. 

Let $U(\tau,\xi)=u(\tau,\xi)-\bar{u}(\tau,\xi)$, then
\begin{equation*}
U_\tau=\cL U-(\bar{u}U)_\xi-UU_\xi\,.
\end{equation*}
By the Cole--Hopf transformation~(\ref{e:ColeHopf}), we similarly have
\begin{equation}\label{e:VV1}
V_\tau=\cL V-(\bar{u}V)_\xi\,.
\end{equation}
By the choice of~$\bar{u}$, $V\in E_s$\,, and by same discussion in the proof of Lemma~\ref{lem:unique-stationary},  
\begin{equation*}
\frac{d}{dt}\|V\|^2_{L^2(K)}
=\left(-\rat12+\rat{3}{2}\e+3C_\e\|\bar{u}\|^2_{L^\infty(\R)}
\right)\|V\|^2_{L^2(K)}\,.
\end{equation*}
By the stationary property of~$\bar{u}(\tau,\xi)$  
almost surely
 \begin{equation*}
\|V(\tau,\xi)\|_{L^2(K)}\rightarrow 0\,,\quad \tau\rightarrow\infty\,.
\end{equation*}
The proof is complete.

\end{proof}
Then for stochastic Burgers' equation~(\ref{e:txBurgers}) on time interval $t\geq 1$  
\begin{theorem}
For $t\geq 1$\,, if the solution~$\fu (t,x)$ to equation~(\ref{e:txBurgers}) satisfies that $e^{\tau/2}\fu (e^{\tau}, e^{\tau/2}\xi)$ is small in the space  $L^2(\Omega_0, L^{2}(K)\cap L^{\infty}(\R))$ for any $\tau>0$\,, then there is a unique stochastic self-similar solution~$\bar{\fu }$ such that for almost all $\omega\in\Omega$
\begin{equation*}
\sqrt{t}\|\fu (t,x,\omega)-\bar{\fu }(t,x,\omega)\|_{L^{2}(\R)}\rightarrow 0\,,\quad t\rightarrow\infty
\end{equation*}
where 
\begin{equation*}
\bar{\fu }(t,x,\omega)=\frac{1}{\sqrt{t}}\bar{u}\left(x/\sqrt{t},\omega\right)
\end{equation*}
with $\bar{u}$ is the unique stationary solution to equation~(\ref{e:tau-xi-Burgers}) with $\int_{\R}\bar{u}(\xi)\,d\xi=\int_{\R}\fu (1,\xi)\,d\xi$\,.
\end{theorem}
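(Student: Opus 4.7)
The final theorem is just the restatement of Theorem \ref{thm:u-ubar} in the physical variables $(t,x)$ via the self-similarity transformation \eqref{eq:gene}, so the plan is essentially a pull-back/push-forward argument rather than new analysis. I would organise the argument in three steps: transform the hypothesis, invoke Theorem \ref{thm:u-ubar}, then transform the conclusion.

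First, set $\tau=\log t$ and $\xi=x/\sqrt t$ and define $u(\tau,\xi,\omega)=\sqrt t\,\fu(t,x,\omega)=e^{\tau/2}\fu(e^\tau,e^{\tau/2}\xi,\omega)$. Then $u$ satisfies the similarity equation~(\ref{e:tau-xi-Burgers}) on $\tau\geq 0$ with initial datum $u^0(\xi)=\fu(1,\xi)$. The smallness assumption of the theorem says exactly that $u(\tau,\cdot)$ is small in $L^2(\Omega_0, L^2(K)\cap L^\infty(\R))$ for every $\tau>0$, which is precisely the hypothesis of Theorem \ref{thm:u-ubar}.

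Second, Theorem \ref{thm:u-ubar} produces a stationary solution $\bar u(\tau,\xi,\omega)$ of~(\ref{e:tau-xi-Burgers}) such that $\|u(\tau)-\bar u(\tau)\|_{L^2(K)}\to 0$ almost surely. The mass $M=\int_\R \fu(1,\xi)\,d\xi=\int_\R u^0(\xi)\,d\xi$ is conserved in the $u$-dynamics (since $b_0=0$ so the noise has no projection onto $E_0$, and the nonlinear term $uu_\xi$ integrates to zero), hence $\int_\R\bar u(\tau,\xi)\,d\xi=M$. Lemma \ref{lem:unique-stationary} then identifies $\bar u$ uniquely among small stationary solutions by this mass, so the candidate $\bar\fu(t,x,\omega)=\frac{1}{\sqrt t}\bar u(x/\sqrt t,\omega)$ is the unique self-similar solution with the required mass; it satisfies~(\ref{e:txBurgers}) by reversing the transformation.

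Third, translate the convergence back. Writing $\fu(t,x)-\bar\fu(t,x)=\frac{1}{\sqrt t}[u(\tau,\xi)-\bar u(\xi)]$ and changing variables $\xi=x/\sqrt t$, $dx=\sqrt t\,d\xi$ yields the identity
\begin{equation*}
\|\fu(t,\cdot)-\bar\fu(t,\cdot)\|_{L^2(\R,dx)}^2 \;=\; t^{-1/2}\|u(\tau)-\bar u\|_{L^2(\R,d\xi)}^2 ,
\end{equation*}
and since $K(\xi)\geq 1$ everywhere we have $\|u(\tau)-\bar u\|_{L^2(\R)}\leq\|u(\tau)-\bar u\|_{L^2(K)}$. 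Combining with the exponential rate extracted from the Gronwall estimate in the proof of Theorem~\ref{thm:u-ubar} (which for $\bar u$ and $\eps$ sufficiently small yields an $L^2(K)$-decay rate strictly faster than $e^{-\tau/4}=t^{-1/4}$) gives the stated convergence $\sqrt t\,\|\fu(t)-\bar\fu(t)\|_{L^2(\R)}\to 0$ almost surely.

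The only real obstacle is the last step: the $\sqrt t$ prefactor exactly cancels the $t^{-1/4}$ gained from rescaling $L^2$, so one needs the decay in $L^2(K)$ to be strictly faster than the critical rate $t^{-1/4}$. I would handle this by noting that the coefficient $-\tfrac12+\tfrac32\eps+3C_\eps\|\bar u\|_{L^\infty}^2$ in the Gronwall argument of Theorem~\ref{thm:u-ubar} is strictly less than $-\tfrac12+\delta$ for some $\delta>0$ chosen so that $-\tfrac12+\delta<-\tfrac12+\tfrac14$ under the smallness assumption, which supplies the needed margin; this also explains why the smallness hypothesis on $\fu$ (rather than mere $L^2(K)$-convergence) is imposed in the statement.
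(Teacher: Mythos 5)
Your overall strategy---pull the hypothesis and conclusion back and forth through the similarity transformation~\eqref{eq:gene} and invoke Theorem~\ref{thm:u-ubar}---is exactly what the paper intends (it states this theorem without proof as an immediate consequence), and you are right to flag that the change of variables is scaling-critical: with the norm taken in the physical variable~$x$ one gets $\sqrt t\,\|\fu(t,\cdot)-\bar\fu(t,\cdot)\|_{L^2(\R,dx)}=t^{1/4}\|u(\tau)-\bar u\|_{L^2(\R,d\xi)}$, so mere convergence $\|u(\tau)-\bar u\|_{L^2(K)}\to0$ is not enough and a rate strictly faster than $e^{-\tau/4}$ is needed.

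The gap is in your resolution of this point: the margin goes the wrong way. The Gronwall coefficient in the proof of Theorem~\ref{thm:u-ubar} is $-\frac12+\frac32\e+3C_\e\|\bar u\|^2_{L^\infty(\R)}$, which is \emph{greater} than $-\frac12$ for every admissible choice of $\e>0$ and every nonzero $\bar u$; it only approaches $-\frac12$ in the smallness limit and never drops below it. Hence the estimate yields $\|V(\tau)\|^2_{L^2(K)}\leq\|V(0)\|^2e^{(-1/2+\delta)\tau}$ for some $\delta>0$, i.e.\ $\|V(\tau)\|_{L^2(K)}\lesssim e^{(-1/4+\delta/2)\tau}$, which decays strictly \emph{slower} than $e^{-\tau/4}$, not faster as you claim. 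Feeding this into your identity gives $t^{1/4}\|u(\tau)-\bar u\|_{L^2}\lesssim e^{\delta\tau/2}=t^{\delta/2}$, which does not tend to zero, so the argument does not close. (The paper's own remark after the theorem concedes the rate only ``is closer to $-1/2$'' for $\|V\|^2$, i.e.\ approaches the critical $t^{-1/4}$ for $\|V\|$ from the slow side.) To make the statement follow from Theorem~\ref{thm:u-ubar} one must either obtain a genuinely supercritical decay rate---which the paper's energy estimate cannot supply---or read the $L^2(\R)$ norm in the similarity variable $\xi$, i.e.\ interpret the conclusion as $\|\sqrt t\,\fu(t,\sqrt t\,\cdot)-\sqrt t\,\bar\fu(t,\sqrt t\,\cdot)\|_{L^2(\R,d\xi)}=\|u(\tau)-\bar u\|_{L^2(\R)}\leq\|u(\tau)-\bar u\|_{L^2(K)}\to0$, in which case the $t^{1/4}$ factor never appears and the theorem is indeed an immediate corollary. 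You should state explicitly which reading you adopt rather than asserting a rate the estimates do not provide.
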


\begin{remark}
By the above discussion, if $u$~is smaller in $L^2(\Omega_0, L^\infty(\R)\cap L^2(K))$, $\e$~can be chosen smaller and then the exponential converge rate is closer to~$-1/2$\,.
\end{remark}

%%%%%%%%%%%%%
%%%%%%%%%%%%%
%%%%%%%%%%%%%

\subsection{Asymptotic convergence  described by a local random invariant manifold }\label{sec:rds}
Next we show the locally asymptotical convergence  of the solution~$u(\tau,\xi)$  by the random invariant manifold theory~\cite[e.g.]{Arnold03, DuanLuSchm}.  This approach does not  require the solution~$u(\tau,\xi)$ is small in the space~$L^2(K)\cap L^\infty(\R)$.

Notice that the nonlinearity~$uu_\xi$ is local Lipschtiz, a global random invariant manifold is difficult to be constructed~\cite{DuanLuSchm, DuanLuSchm2}. Recent work by Bl\"omker and Wang~\cite{BlomkerWang10} gave a cut-off method to construct a local random invariant manifold~(\textsc{lrim}) for \textsc{spde}s with quadratic nonlinearity. Here we also consider a \textsc{lrim} for equation~(\ref{e:tau-xi-Burgers}) by a cut-off technique.

We construct a \textsc{lrim} for a  stationary solution~$\bar{u}$\,.  
Notice the noise is additive in~(\ref{e:tau-xi-Burgers}), for any stationary solution~$\bar{u}$ we introduce
\begin{equation*}
v(\tau,\xi)=u(\tau,\xi)-\bar{u}(\theta_\tau\omega, \xi).
\end{equation*}
Then denote by $B(u,v)=\tfrac12(uv_\xi+vu_\xi)$,  
\begin{equation}\label{e:RBurgers}
v_\tau=\cL v-[B(v+\bar{u}, v+\bar{u})-B(\bar{u}, \bar{u})]
\end{equation}
which is a random evolutionary equation.
By the property of~$\cL$, deterministic approach for the well-posedness yields that equation~(\ref{e:RBurgers}) defines a continuous random dynamical system~$\phi(\tau,\omega)$ with driven system $\{\theta_{\tau}\}_{\tau}$.

Now let~$P_s$ be the projection from~$L^2(K)$ to the space~$E_s$, and
\begin{equation*}
v=v_0+v_s\in E_0\oplus E_s\,.
\end{equation*}
Then
\begin{eqnarray*}
\dot{v}_0&=&0\\
\dot{v}_s&=&\cL v_s-[B(v+\bar{u}, v+\bar{u})-B(\bar{u},\bar{u})]\,.
\end{eqnarray*}
Notice that nonlinearity
\begin{equation*}
F(v,\theta_\tau\omega)=B(v+\bar{u}, v+\bar{u}(t))-B(\bar{u},\bar{u})
\end{equation*}
is non-Lipschtiz, to construct a \textsc{lrim} for~$\phi(t,\omega)$ we need some cutoff technique~\cite{BlomkerWang10} .  Denote by~$B_R(0)$ the ball with radius~$R$ that is
\begin{equation*}
B_R(0)=\{u\in L^2(K): \|v\|_{L^2(K)}\leq R\}\,.
\end{equation*}
For any set $A\subset L^2(K)$, define the following  distance
\begin{equation*}
\operatorname{dist}(u, A)=\inf_{v\in A}\|u-v\|_{L^2(K)}
\end{equation*}
for any $u\in L^2(K)$.
Then introduce the following cutoff function  on the space ~$L^2(K)$
\begin{equation*}
B^R(u,u)=\chi_R(u)B(u,u),\quad u\in L^2(K)
\end{equation*}
where $R>0$ and $\chi_R(u)=\chi(u/R)$ with $\chi: L^2(K)\rightarrow \R$ is a smooth bounded function with $\chi(u)=1$ if $\|u\|_{L^2(K)}\leq 1$ and $\chi(u)=0$ if $\|u\|_{L^2(K)}\geq 2$\,. Then the nonlinear term $B^R(u, u): L^2(K)\rightarrow H^{-\gamma}(K)$, $0<\gamma<1$\,, is Lipschitz continuous. More precisely for some positive constant~$C_B$
\begin{equation*}
\|B(u_1, u_1)-B(u_2, u_2)\|_{H^{-\gamma}(K)}\leq 2RC_B\|u_1-u_2\|_{L^2(K)}
\end{equation*}
for all $u_1,u_2\in L^2(K)$ with $\|u_1\|_{L^2(K)}\leq R$ and $\|u_2\|_{L^2(K)}\leq R$\,. Now define
\begin{equation*}
F^R(v,\theta_\tau\omega)=B^R(v+\bar{u},v+\bar{u})-
B^R(\bar{u}, \bar{u}),
\end{equation*}
then
\begin{equation*}
\text{Lip}_{L^2(K), H^{-\gamma}(K)}(F^R(u,u))=L_R:=2RC_B\,.
\end{equation*}

Consider now the following cut-off system
\begin{eqnarray}\label{e:v-c-cut}
\dot{v}^R_0&=&0\\
\dot{v}^R_s&=&\cL v^R_s-F^R(v,\theta_\tau\omega)\label{e:v-s-cut}
\end{eqnarray}
which defines a continuous random dynamical system~$\phi^R(\tau, \omega)$.
By the spectrum property of~$\cL$, for small $R>0$\,, the  Lyapunov--Perron method for \textsc{spde}s is applicable to the cutoff system~(\ref{e:v-c-cut})--(\ref{e:v-s-cut})~\cite[e.g.]{BlomkerWang10, DuanLuSchm2}\,. Then for enough small~$R$, $\phi^R(\tau,\omega)$~has a random invariant manifold~$\mathcal{M}^R_{\text{cut}}(\omega)$ which can be represented by
\begin{equation*}
\mathcal{M}_{\text{cut}}^R(\omega)=\{(v_0, h(v_0,\omega)): v_0\in E_0\}\,.
\end{equation*}
Here $h(\cdot, \omega): E_0\rightarrow E_s$ is Lipschitz continuous with $h(0,\omega)=0$ and
\begin{eqnarray*}
h(v_0,\omega)=-\int_{-\infty}^0
e^{-\cL s}F^R(\bar{v}_0^*+\bar{v}^*_s,\theta_s\omega)\,ds
\end{eqnarray*}
where $(\bar{v}^*_0, \bar{v}_s^*)$ is the unique solution of
\begin{eqnarray}\label{e:v*c}
v^*_0(\tau)&=&v_{0}\\
v^*_s(\tau)&=&-\int_{-\infty}^\tau e^{\cL(\tau-s)}F^R(v_0^*+v^*_s,\theta_s\omega)\,ds\label{e:v*s}
\end{eqnarray}
in the Banach space
\begin{equation*}
C_\lambda^-=\left\{v\in C((-\infty, 0],  L^2(K)): \sup_{\tau\leq 0}e^{-\lambda\tau}\|v(\tau)\|_{L^2(K)}<\infty \right\}
\end{equation*}
endowed with norm
\begin{equation*}
\|v\|_{C_\lambda^-}=\sup_{\tau\leq 0}e^{-\lambda\tau}\|v(\tau)\|_{L^2(K)}\,,
\end{equation*}
where $-1/2<\lambda<0$\,.
In fact for any $v_0\in E_0$ define nonlinear operator $\mathcal{T}: C_\lambda^-\rightarrow C_\lambda^-$ by
\begin{equation*}
(v^*_0, v^*_s)\mapsto \mathcal{T}(v^*_0, v^*_s; v_0)=\text{right-hand side of~(\ref{e:v*c})--(\ref{e:v*s})}\,.
\end{equation*}
Then for $R>0$ is small enough, a direct calculation yields the mapping~$\mathcal{T}$ is contraction; that is, $\mathcal{T}$~has a unique fixed point $(\bar{v}_0^*, \bar{v}_s^*)=(v_0, \bar{v}_s^*)\in C_\lambda^-$ which is the unique solution to~(\ref{e:v*c})--(\ref{e:v*s}).

Then
\begin{equation}\label{e:LRIM}
 \mathcal{M}_{\text{loc}}(\omega)
 =\mathcal{M}^R_{\text{cut}}(\omega)\cap B_R(0)
 \end{equation}
defines a \textsc{lrim} for~$\phi(t,\omega)$. Furthermore, by the similar discussion for stochastic Burgers' equation on bounded domain~\cite{BlomkerWang10} , the random invariant manifold~$\mathcal{M}^R_{\text{cut}}(\omega)$ is almost surely complete. That is, we have the following result.
\begin{theorem}\label{thm:cutRIM}
Assume $R>0$ is small enough. Then for any solution $v^R(\tau,\xi)=(v^R_0(\tau, \xi), v^R_s(\tau,\xi))$ of the cutoff system~(\ref{e:v-c-cut})--(\ref{e:v-s-cut})\, there is
one orbit~$V^R(\tau ,\xi)$ on~$\mathcal{M}^R_{\text{cut}}$ with $V^R_0=v^R_0$ such that
\begin{eqnarray*}
&&\|(v^R_0(\tau), v^R_s(\tau))-(V^R_0(\tau), V^R_s(\tau))\|_{L^2(K)}\\&\leq & \|v^R(0)-V^R(0)\|e^{-\lambda^*\tau}
\end{eqnarray*}
where
\begin{equation*}
\lambda^*=\rat12-\frac34L_R\left(1+\frac{1}{\delta}\right)
\end{equation*}
for some $\delta>0$\,.
\end{theorem}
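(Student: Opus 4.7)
The plan is to carry out a standard asymptotic completeness (tracking) argument for the flat random invariant manifold $\mathcal{M}^R_{\text{cut}}(\omega)$. Since the slow dynamics satisfies $\dot v_0 = 0$, the natural candidate shadow orbit is the one whose slow coordinate matches $v^R_0$. Concretely, given the cutoff solution $v^R(\tau) = (v^R_0, v^R_s(\tau))$, set $V^R_0(\tau) \equiv v^R_0$ and $V^R_s(\tau) = h(v^R_0, \theta_\tau\omega)$. By the invariance of $\mathcal{M}^R_{\text{cut}}$, this $V^R(\tau)$ is an orbit of the cutoff system lying on the manifold, and evidently $V^R_0 = v^R_0$ as required.

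Next I would derive and estimate the equation satisfied by $z(\tau) = v^R_s(\tau) - V^R_s(\tau) \in E_s$. Subtracting equation~\eqref{e:v-s-cut} from the corresponding invariance equation for $V^R_s$ gives
\begin{equation*}
\dot{z} = \cL z - \big[P_s F^R(v^R_0 + v^R_s, \theta_\tau\omega) - P_s F^R(v^R_0 + V^R_s, \theta_\tau\omega)\big].
\end{equation*}
Taking the $L^2(K)$ inner product with $z$, Lemma~\ref{lem:Kavian}\ref{i:3}--(iv) yields the linear contribution $\langle \cL z, z\rangle \leq -\tfrac12\|z\|_{H^1(K)}^2$, while the Lipschitz property of $F^R$ from $L^2(K)$ into $H^{-\gamma}(K)$ with constant $L_R = 2R C_B$ controls the nonlinear remainder through the duality pairing
\begin{equation*}
\big|\langle P_s F^R(\cdot) - P_s F^R(\cdot),\, z\rangle\big| \leq L_R \|z\|_{L^2(K)} \|z\|_{H^\gamma(K)}.
\end{equation*}

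I would then interpolate $\|z\|_{H^\gamma(K)}^2 \leq \|z\|_{L^2(K)}^{2(1-\gamma)} \|z\|_{H^1(K)}^{2\gamma}$ and apply Young's inequality with parameter $\delta > 0$ to split this into a piece absorbable by the $-\tfrac12\|z\|_{H^1(K)}^2$ term and a piece of the form $\tfrac{3}{4}L_R(1+\tfrac{1}{\delta})\|z\|_{L^2(K)}^2$. Combined with the embedding $\|z\|_{L^2(K)} \leq 2\|z\|_{H^1(K)}$ from Lemma~\ref{lem:Kavian}\ref{i:3}, this produces the differential inequality
\begin{equation*}
\tfrac{d}{d\tau}\|z(\tau)\|_{L^2(K)}^2 \leq -2\lambda^* \|z(\tau)\|_{L^2(K)}^2,
\qquad \lambda^* = \tfrac12 - \tfrac34 L_R\Big(1+\tfrac{1}{\delta}\Big),
\end{equation*}
and Gronwall then delivers the desired exponential tracking estimate with rate $\lambda^*$.

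The main obstacle is the bookkeeping in step three: because $F^R$ is Lipschitz only into the weaker space $H^{-\gamma}(K)$, one cannot directly close the estimate in $L^2(K)$ and must interpolate through $H^\gamma(K)$ and then balance the resulting $\|z\|_{H^1(K)}^2$ term against the dissipation from $\cL$. This is exactly where the choice of $\delta > 0$ and the smallness of $R$ (so that $L_R$ is small enough to render $\lambda^* > 0$) become essential; for small enough $R$, both the contraction argument used to construct $h$ and this completeness estimate succeed simultaneously.
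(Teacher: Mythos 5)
Your argument is essentially correct, but it is a genuinely different route from the one the paper relies on: the paper gives no proof of Theorem~\ref{thm:cutRIM} at all, instead appealing to the asymptotic completeness result of Bl\"omker and Wang~\cite{BlomkerWang10}, which is established there by a Lyapunov--Perron fixed-point construction of the shadowing orbit in an exponentially weighted space over $[0,\infty)$ (this is also where the parameter $\delta$ in $\lambda^*=\rat12-\frac34L_R(1+\frac1\delta)$ originates, as the distance of the chosen weight from the spectral gap boundaries). Your proof instead exploits the special feature of this problem that the slow dynamics is trivial, $\dot v^R_0=0$: this lets you write down the shadow orbit explicitly as $V^R(\tau)=(v^R_0,\,h(v^R_0,\theta_\tau\omega))$ (legitimately an orbit, by invariance of the graph and conservation of the slow coordinate) and then close a direct Gronwall estimate on $z=v^R_s-V^R_s$ using the spectral gap $\langle\cL z,z\rangle\le-\rat12\|z\|^2_{H^1(K)}$ on $E_s$ and the $L^2(K)\to H^{-\gamma}(K)$ Lipschitz bound $L_R$ on $F^R$. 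This is more elementary and self-contained than the cited fixed-point argument, at the cost of generality: it would not survive a nontrivial slow equation, where the shadowing orbit does not share the instantaneous slow coordinate and must itself be constructed by a fixed point. Two small caveats: the exact constant $\frac34L_R(1+\frac1\delta)$ will not fall out of your interpolation--Young step verbatim (Young's inequality with the exponents forced by $H^{\gamma}$-interpolation produces a power $L_R^{2/(2-\gamma)}$ rather than $L_R$), but since the theorem only asserts the rate ``for some $\delta>0$'' and $L_R=2RC_B$ is small, your bound is easily absorbed into the stated form; and you should note explicitly that $\|z\|_{L^2(K)}\le\sqrt2\,\|\nabla z\|_{L^2(K)}$ from Lemma~\ref{lem:Kavian}\ref{i:3} is what converts the $H^1(K)$-dissipation into the $L^2(K)$-decay needed for Gronwall.
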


The above result yields that
\begin{theorem}\label{thm:LRIM}
For any $u^{0}\in B_R(0)$,
\begin{equation*}
{\rm dist}(\phi(\tau,\omega)u^{0}, \mathcal{M}_{{\rm loc}}(\theta_\tau\omega))\leq 2Re^{-\lambda^* \tau}
\end{equation*}
for all $\tau<\tau_0(\omega)=\inf\{\tau>0: \phi(\tau,\omega)u_0\not\in B_R(0)\}$\,.
\end{theorem}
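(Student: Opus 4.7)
The plan is to reduce the statement to Theorem~\ref{thm:cutRIM} by exploiting the fact that the cutoff RDS $\phi^R$ generated by~\eqref{e:v-c-cut}--\eqref{e:v-s-cut} coincides with the original RDS $\phi$ from~\eqref{e:RBurgers} on the ball~$B_R(0)$, because $\chi_R(u)=1$ there. First I would record this coincidence: for $\tau<\tau_0(\omega)$ the orbit $\phi(\tau,\omega)u^0$ lies in~$B_R(0)$ by definition of~$\tau_0$, so on that time interval $F^R$ agrees with~$F$ along the orbit and hence $\phi(\tau,\omega)u^0=\phi^R(\tau,\omega)u^0$. Thus it suffices to estimate the distance from $\phi^R(\tau,\omega)u^0$ to $\mathcal{M}_{\text{loc}}(\theta_\tau\omega)$.

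Second, I would apply Theorem~\ref{thm:cutRIM} to the cutoff orbit $v^R(\tau):=\phi^R(\tau,\omega)u^0$. It supplies an orbit $V^R(\tau)$ on $\mathcal{M}^R_{\text{cut}}(\theta_\tau\omega)$ with $V^R_0(0)=P_0u^0$ satisfying
\[
\|v^R(\tau)-V^R(\tau)\|_{L^2(K)}\le \|v^R(0)-V^R(0)\|_{L^2(K)}\,e^{-\lambda^*\tau}.
\]
Since $V^R(0)=(P_0u^0,h(P_0u^0,\omega))$ with $h(0,\omega)=0$ and $h$ having Lipschitz constant that can be made as small as desired by choosing~$R$ small, one obtains $\|V^R(0)\|_{L^2(K)}\le R$, and the triangle inequality combined with $\|v^R(0)\|_{L^2(K)}=\|u^0\|_{L^2(K)}\le R$ gives the initial error bound $\|v^R(0)-V^R(0)\|_{L^2(K)}\le 2R$.

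Third, I need to upgrade the distance from $\mathcal{M}^R_{\text{cut}}$ to the local manifold $\mathcal{M}_{\text{loc}}=\mathcal{M}^R_{\text{cut}}\cap B_R(0)$; for this it is enough to verify $V^R(\tau)\in B_R(0)$. Because the slow dynamics on the manifold is trivial ($\dot V_0^R=0$), the shadowing orbit is simply $V^R(\tau)=(P_0u^0,h(P_0u^0,\theta_\tau\omega))$, and the same Lipschitz-smallness of~$h$ used above, together with $\|P_0u^0\|_{L^2(K)}\le\|u^0\|_{L^2(K)}\le R$, shows $V^R(\tau)\in B_R(0)$ almost surely for all $\tau$. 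Combining these ingredients,
\[
\operatorname{dist}(\phi(\tau,\omega)u^0,\mathcal{M}_{\text{loc}}(\theta_\tau\omega))\le \|v^R(\tau)-V^R(\tau)\|_{L^2(K)}\le 2R\,e^{-\lambda^*\tau}
\]
for $\tau<\tau_0(\omega)$, as claimed.

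The main obstacle is this last point: controlling the shadowing orbit so that it remains in~$B_R(0)$ and therefore actually belongs to $\mathcal{M}_{\text{loc}}$ rather than merely to~$\mathcal{M}^R_{\text{cut}}$. Everything else is a clean application of Theorem~\ref{thm:cutRIM} and the agreement of $\phi$ with $\phi^R$ on $B_R(0)$; the delicate part is the quantitative use of the small Lipschitz constant of~$h$ (for sufficiently small~$R$) to pin the graph $\{(v_0,h(v_0,\omega)):\|v_0\|\le R\}$ inside the ball~$B_R(0)$ uniformly in~$\omega$ under the driving flow $\theta_\tau$.
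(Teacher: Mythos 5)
Your proposal is correct and is essentially the argument the paper intends: the paper states Theorem~\ref{thm:LRIM} as an immediate consequence of Theorem~\ref{thm:cutRIM}, relying on exactly the reduction you describe (agreement of $\phi$ with $\phi^R$ on $B_R(0)$ up to the exit time, the shadowing estimate, and the bound $\|v^R(0)-V^R(0)\|_{L^2(K)}\le 2R$). Your extra care in checking that the shadowing orbit $V^R(\tau)=(P_0u^0,h(P_0u^0,\theta_\tau\omega))$ actually lies in $B_R(0)$ — so the distance is to $\mathcal{M}_{\rm loc}$ and not merely to $\mathcal{M}^R_{\rm cut}$ — fills in a detail the paper leaves implicit, using the smallness of the Lipschitz constant $L_R=2RC_B$ of the cutoff nonlinearity.
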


Theorem~\ref{thm:cutRIM} and \ref{thm:LRIM} describe the local attractive property of the zero solution of~(\ref{e:RBurgers}). Especially for the solution~$(0, v_s(\tau,\xi))$ to~(\ref{e:RBurgers}), by $h(0,\omega)=0$\,, the attractive orbit on invariant manifold~$\mathcal{M}_{\text{loc}}(\omega)$ is the zero solution. That is the solution $u(\tau,\xi)=(u^{0}, u_s(\tau, \xi))$ to~(\ref{e:tau-xi-Burgers}) is attracted by a stationary solution $\bar{u}(\theta_\tau\omega, \xi)=(u^{0}, \bar{u}_s(\theta_\tau\omega,\xi))$ for $\tau<\tau_0(\omega)$ provide $u(0,\xi)-\bar{u}(0,\xi)$ lies in a small ball $B_R(0)\subset L^2(K)$. By the construction of the local random invariant manifold, $\bar{u}$~is the unique stationary solution that attracts~$u$. We then still have the result of Theorem~\ref{thm:u-ubar} in a small ball of $L^2(K)\cap L^\infty(\R)$. However,  the local random invariant manifold method does not restrict solution~$u$ to be  small in the space $L^2(K)\cap L^\infty(\R)$. We draw this conclusion in the following corollary.
\begin{coro}\label{cor:u-ubar}
For any solution $u(\tau,\xi)=(u^{0}, u_s(\tau,\xi))$ to equation~(\ref{e:tau-xi-Burgers}) with $u^{0}\in L^2(K)\cap L^\infty(\R)$, let $\bar{u}(\tau,\xi)=\bar{u}(\theta_\tau\omega, \xi)=(u_0, \bar{u}_s(\theta_\tau\omega,\xi))$ be one stationary solution of equation~(\ref{e:tau-xi-Burgers}). Then if $u(0,\xi)-\bar{u}(0,\xi)$ lies in a small ball $B_R(0)\subset L^2(K)$ with $R$~small enough, almost surely
\begin{equation*}
\|u(\tau,\xi)-\bar{u}(\tau, \xi)\|_{L^2(K)}\leq \|u(0,\xi)-\bar{u}(0,\xi)\|_{L^2(K)}e^{-\lambda^*\tau}
\end{equation*}
for $\tau<\tau_0$\,.
\end{coro}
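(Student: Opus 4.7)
The approach is to recognise that the corollary follows directly by specialising Theorem~\ref{thm:cutRIM} to the difference $v=u-\bar{u}$, with the attracting orbit on $\mathcal{M}_{\text{loc}}(\omega)$ chosen to be the zero orbit. The work amounts to verifying that this identification is legitimate and then transferring the resulting estimate from the cutoff system back to the genuine equation.

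First, I would set $v(\tau,\xi)=u(\tau,\xi)-\bar{u}(\theta_\tau\omega,\xi)$ so that, by construction, $v$ satisfies the random evolutionary equation~\eqref{e:RBurgers}. Because $u(\tau,\xi)=(u^0,u_s(\tau,\xi))$ and $\bar{u}(\tau,\xi)=(u_0,\bar{u}_s(\theta_\tau\omega,\xi))$ share the same $E_0$-projection, the slow component $v_0(\tau)$ vanishes identically, so $v(\tau)\in E_s$ for all $\tau$. The hypothesis that $u(0,\xi)-\bar{u}(0,\xi)$ lies in the small ball $B_R(0)\subset L^2(K)$ is exactly the requirement $v(0)\in B_R(0)$ needed to enter the regime in which Theorem~\ref{thm:cutRIM} applies.

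Second, I would identify the attracting orbit on the local random invariant manifold. Since $h(0,\omega)=0$, the point $(0,0)\in E_0\oplus E_s$ lies on $\mathcal{M}_{\text{cut}}^R(\omega)$. Moreover, one checks that $F^R(0,\theta_\tau\omega)=B^R(\bar{u},\bar{u})-B^R(\bar{u},\bar{u})=0$, so the constant zero orbit is the unique solution of the cutoff system~\eqref{e:v-c-cut}--\eqref{e:v-s-cut} with $V_0^R=0=v_0$. Applying Theorem~\ref{thm:cutRIM} with this particular orbit $V^R\equiv 0$ yields, for the cutoff solution $v^R$,
\begin{equation*}
\|v^R(\tau)\|_{L^2(K)}\leq\|v^R(0)\|_{L^2(K)}\,e^{-\lambda^*\tau}.
\end{equation*}

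Third, I would transfer this bound to the genuine flow. By the definition of $\tau_0(\omega)$ in Theorem~\ref{thm:LRIM}, the orbit $\phi(\tau,\omega)v(0)$ remains inside $B_R(0)$ for all $\tau<\tau_0(\omega)$, and on this set the cutoff factor $\chi_R$ equals one; hence the cutoff and uncut flows coincide, giving $v^R(\tau)=v(\tau)=u(\tau)-\bar{u}(\tau)$ for $\tau<\tau_0$. Combining with the previous estimate completes the proof. The only conceptual obstacle is the bookkeeping that pins down $V^R\equiv 0$ as the attracting orbit: this uses both $h(0,\omega)=0$ and the matching of $E_0$-projections between $u$ and $\bar{u}$, so that $v$ cannot drift out of a neighbourhood of zero through the slow direction. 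No new estimates beyond those already in Theorems~\ref{thm:cutRIM} and~\ref{thm:LRIM} are required.
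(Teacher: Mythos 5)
Your proposal is correct and follows essentially the same route as the paper: the paper likewise derives the corollary from Theorem~\ref{thm:cutRIM} by noting that $v=u-\bar{u}$ solves~(\ref{e:RBurgers}) with vanishing $E_0$-component, that $h(0,\omega)=0$ makes the zero orbit the attracting orbit on $\mathcal{M}_{\text{loc}}(\omega)$, and that the cutoff and uncut flows agree inside $B_R(0)$ for $\tau<\tau_0(\omega)$. Your explicit observation that $F^R(0,\theta_\tau\omega)=0$ pins down the zero orbit slightly more carefully than the paper's prose, but the argument is the same.
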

Now the following theorem applies to the stochastic Burgers' equation~(\ref{e:txBurgers}). 
\begin{theorem}
For any solution~$\fu (t,x)$ to stochastic Burgers' equation~(\ref{e:txBurgers}), let $\bar{\fu }(t,x)$~be a self-similar solution to equation~(\ref{e:txBurgers}). Then if $\fu (1,x)-\bar{\fu }(1,x)$ lies in a small ball $B_R(0)\subset L^2(K)$ with $R$~small enough, almost surely
\begin{equation*}
\sqrt{t}\|\fu (t,x)-\bar{\fu }(t,x)\|_{L^2(\R)}\rightarrow 0\,,\quad t\rightarrow\infty\,.
\end{equation*}
\end{theorem}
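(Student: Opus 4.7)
\medskip

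\noindent\textbf{Proof proposal.}
The plan is to translate this statement in the physical variables $(t,x)$ directly from Corollary~\ref{cor:u-ubar}, which provides the corresponding decay in the similarity variables $(\tau,\xi)$.  I would first apply the self-similar transformation~\eqref{eq:gene} to both $\fu$ and~$\bar{\fu}$, setting
\begin{equation*}
u(\tau,\xi)=\sqrt{t}\,\fu(t,x),\qquad \bar{u}(\tau,\xi)=\sqrt{t}\,\bar{\fu}(t,x),\qquad \tau=\log t,\ \xi=x/\sqrt t.
\end{equation*}
By construction $u$~satisfies the \spde~(\ref{e:tau-xi-Burgers}) and $\bar{u}(\theta_\tau\omega,\xi)$ is a stationary solution of the same \spde, so the hypotheses of Corollary~\ref{cor:u-ubar} are set up correctly.

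Next I would verify the smallness condition.  At $t=1$ we have $\tau=0$ and $\xi=x$, hence $u(0,\xi)-\bar{u}(0,\xi)=\fu(1,x)-\bar{\fu}(1,x)$, which by hypothesis lies in the ball~$B_R(0)\subset L^2(K)$ for small enough~$R$.  Corollary~\ref{cor:u-ubar} then yields, almost surely,
\begin{equation*}
\|u(\tau,\xi)-\bar{u}(\tau,\xi)\|_{L^2(K)}\leq \|u(0,\xi)-\bar{u}(0,\xi)\|_{L^2(K)}\,e^{-\lambda^*\tau}\leq R\,e^{-\lambda^*\tau},\qquad \tau<\tau_0(\omega).
\end{equation*}
Because this exponential estimate forces $u(\tau)-\bar{u}(\tau)$ to remain in~$B_R(0)$ for as long as the bound holds, the stopping time~$\tau_0(\omega)$ must be~$+\infty$: a short bootstrap argument extends the estimate globally.

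Finally I would invert the transformation.  The change of variables $x=\sqrt{t}\,\xi$ gives
\begin{equation*}
\|\fu(t,\cdot)-\bar{\fu}(t,\cdot)\|_{L^2(\R)}^2
=\int_{\R}\frac{1}{t}|u(\tau,\xi)-\bar{u}(\tau,\xi)|^2\sqrt t\,d\xi
=\frac{1}{\sqrt t}\|u(\tau)-\bar{u}(\tau)\|_{L^2(\R)}^2,
\end{equation*}
so $\sqrt{t}\,\|\fu(t,\cdot)-\bar{\fu}(t,\cdot)\|_{L^2(\R)}=t^{1/4}\|u(\tau)-\bar{u}(\tau)\|_{L^2(\R)}$.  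Since $K(\xi)\geq1$ the $L^2(\R)$-norm is dominated by the $L^2(K)$-norm, and the previous bound gives $\sqrt{t}\|\fu-\bar{\fu}\|_{L^2(\R)}\leq R\,t^{1/4-\lambda^*}$.  The main obstacle is ensuring $\lambda^*>1/4$ so this factor vanishes as $t\to\infty$.  From Theorem~\ref{thm:cutRIM}, $\lambda^*=\tfrac12-\tfrac34 L_R(1+1/\delta)$ with $L_R=2RC_B$; shrinking~$R$ (which we are free to do) pushes $\lambda^*$ arbitrarily close to~$\tfrac12$, hence strictly above~$\tfrac14$.  The claimed almost sure convergence follows.
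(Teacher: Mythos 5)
Your proposal is correct and follows the same route the paper intends: the paper states this theorem without proof, as an immediate consequence of Corollary~\ref{cor:u-ubar} under the similarity transformation~\eqref{eq:gene}. You in fact supply details the paper omits --- the $t^{1/4}$ Jacobian factor that forces $\lambda^*>1/4$ (hence $R$ small), the bootstrap showing $\tau_0(\omega)=+\infty$, and the domination of $\|\cdot\|_{L^2(\R)}$ by $\|\cdot\|_{L^2(K)}$ --- the only unstated point being that $\bar{\fu}$ must share the mass $\int_\R\fu(1,x)\,dx$ so that the difference has zero $E_0$-component, as Corollary~\ref{cor:u-ubar} implicitly requires.
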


\begin{remark}
The local random invariant manifold we constructed here  is a global one for the random dynamical system~$\phi(t,\omega)$  by the next global asymptotic convergence.
\end{remark}

%%%%%%%%%%%%%
%%%%%%%%%%%%%
%%%%%%%%%%%%%
\subsection{Globally asymptotic convergence to self-similar solutions}
\label{ss:gacsss}

We show the asymptotic convergence in probability  of any solution of stochastic Burgers' equation. For this we first  consider the Markov semigroup defined by the solution of the equation~(\ref{e:tau-xi-Burgers}).  

Denote by $\mathcal{M}$ the space consisting  all probability measures on space $L^{2}(K)\cap L^{\infty}(\R)$ and endow $\mathcal{M}$ with the topology of weak convergence.   Define continuous Markov semigroup $\{P_{\tau}\}_{\tau\geq 0}$ on $\mathcal{M}$
\begin{equation*}
P_{\tau}\mu(A)=\mathbb{P}\{u(\tau,\cdot)\in A\}\,, \quad \mu\in\mathcal{M}
\end{equation*}
for any Borel measurable  set $A\subset L^{2}(K)\cap L^{\infty}(\R)$ and $u(\tau, \cdot)$ is the solution to equation~(\ref{e:tau-xi-Burgers}) with initial value $u^{0}$ distributes as $\mu$\,.  The space $\mathcal{M}$ is  too larger for our purpose.   For this we introduce the following subspace 
\begin{equation*}
\mathcal{M}_{2}=\left\{\mu\in\mathcal{M}: \int_{L^{2}(K)\cap L^{\infty}(\R)}\|u\|^{2}_{L^{2}(K)}\mu(du)<\infty \right\}\,.
\end{equation*}

By the discussion in section \ref{sec:selfsimilar}\,,  there is a $\bar{\mu}\in\mathcal{M}_{2}$ such that $P_{\tau}\bar{\mu}=\bar{\mu}$\,, which is called stationary measure of $P_{\tau}$\,.  We next show that for any $\mu\in\mathcal{M}_{2}$ there is a unique stationary measure $\bar{\mu}\in\mathcal{M}_{2}$ such that $P_{\tau}\mu$ converges weakly to $\bar{\mu}$ as $\tau\rightarrow \infty$\,, that is 
\begin{equation*}
\int_{L^{2}(K)\cap L^{\infty}(\R)}f(u)P_{\tau}\mu(du)\rightarrow \int_{L^{2}(K)\cap L^{\infty}(\R)}f(u)\bar{\mu}(du),\quad \tau\rightarrow \infty
\end{equation*}
for any bounded continuous function $f: L^{2}(K)\cap L^{\infty}(\R)\rightarrow \R$\,.  

Associate the solution to equation (\ref{e:tau-xi-Burgers}) we choose  $\mu\in\mathcal{M}_{2}$ which has the form
\begin{equation}\label{e:split}
\mu=\delta_{M}*\mu_{s}
\end{equation}
where $\delta_{M}$ is some Dirac measure on $E_{0}$ and $\mu_{s}$ is supported on $E_{s}$\,.  Then consider the limit of $P_{\tau}\mu$ as $\tau\rightarrow\infty$\,. First by the estimates in section \ref{sec:selfsimilar} we have a measure $\bar{\mu}$  and subsequence $\tau_{n}$ with $\tau_{n}\rightarrow\infty$\,, $n\rightarrow\infty$, such that 
\begin{equation}\label{e:convergence}
P_{\tau_{n}}\mu\rightarrow \bar{\mu}\,, \quad n\rightarrow\infty.
\end{equation} 
To show the uniqueness of $\bar{\mu}$ need a contraction property  of the system~(\ref{e:tau-xi-Burgers}).  This is from the deterministic result~\cite{Zuazua94,Kim01}.
\begin{lemma}\label{lem:contraction}
For any $u^{1}$\,, $u^{2}\in L^{2}(K)\cap L^{\infty}(\R)$ with 
\begin{equation*}
\int_{\R}u^{1}(\xi)\,d\xi=\int_{\R}u^{2}(\xi)\,d\xi\,.
\end{equation*}
Let $u^{1}(\tau,\xi)$ and $u^{2}(\tau,\xi)$ be the solutions to   equation (\ref{e:tau-xi-Burgers}) with initial value $u^{1}$ and $u^{2}$ respectively. Then the  function 
\begin{equation*}
\phi(\tau)=\int_{\R}|u^{1}(\tau,\xi)-u^{2}(\tau,\xi)|\,d\xi
\end{equation*}
is strictly decreasing almost surely.
\end{lemma}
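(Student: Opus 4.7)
The crucial observation is that the noise enters equation~(\ref{e:tau-xi-Burgers}) additively. Consequently, if $u^1(\tau,\xi)$ and $u^2(\tau,\xi)$ are solutions driven by the same Wiener path~$W(\tau,\xi,\omega)$, then the difference $U=u^1-u^2$ satisfies, pathwise in~$\omega$, the \emph{deterministic} (random-coefficient but non-stochastic) evolution equation
\begin{equation*}
U_\tau = U_{\xi\xi}+\tfrac12\xi U_\xi+\tfrac12 U-\tfrac12\bigl((u^1+u^2)U\bigr)_\xi,
\end{equation*}
where the identity $u^1u^1_\xi-u^2u^2_\xi=\tfrac12\bigl((u^1+u^2)U\bigr)_\xi$ has been used. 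So the problem reduces to the classical $L^1$-contraction analysis of a convection-diffusion equation, carried out separately for each sample path, in the spirit of Zuazua~\cite{Zuazua94} and Kim~\cite{Kim01}.

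The plan is to apply the Kruzhkov-type argument: approximate $\operatorname{sgn}$ by a smooth nondecreasing $\phi_\delta\in C^1(\R)$ with $\phi_\delta\to\operatorname{sgn}$ as $\delta\to 0$\,, multiply the equation for~$U$ by $\phi_\delta(U)$, integrate over~$\R$, and then let $\delta\to 0$. Term by term, (i)~the diffusion gives $\int_\R U_{\xi\xi}\phi_\delta(U)\,d\xi=-\int_\R\phi'_\delta(U)U_\xi^2\,d\xi\leq 0$\,, and in the limit yields Kato's inequality $\int_\R U_{\xi\xi}\operatorname{sgn}(U)\,d\xi\leq 0$\,; (ii)~the linear $\mathcal{L}$-terms combine via integration by parts in the form $\tfrac12\int_\R\xi(|U|)_\xi\,d\xi+\tfrac12\int_\R|U|\,d\xi=0$\,, using sufficient decay at infinity guaranteed by the weighted-space estimates of Section~\ref{sec:selfsimilar}; (iii)~the nonlinear term $-\tfrac12\int_\R\bigl((u^1+u^2)|U|\bigr)_\xi\,d\xi=0$ is a pure divergence which vanishes by the same decay. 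Summing these contributions produces the monotonicity $\frac{d}{d\tau}\int_\R|U(\tau,\xi)|\,d\xi\leq 0$\,, so $\phi(\tau)$ is nonincreasing almost surely.

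To upgrade nonincreasing to \emph{strictly} decreasing when $u^1\not\equiv u^2$\,, I would exploit parabolic regularity together with the mass condition. Since $\int_\R U(0,\xi)\,d\xi=0$ and $U\not\equiv 0$\,, the function $U(\tau,\cdot)$ must change sign on a set of positive measure for every $\tau>0$\,. Because $U$ solves a pathwise linear parabolic equation with bounded coefficients (after freezing the path of $u^1+u^2$), the strong parabolic maximum principle applied separately to $U^+$ and $U^-$ shows that at each $\tau>0$ one has $\operatorname{meas}\{U(\tau,\cdot)>0\}>0$ and $\operatorname{meas}\{U(\tau,\cdot)<0\}>0$ on any interval on which both are nontrivial, and the strict Kato inequality on the zero set of~$U$ forces the nonnegative integrand $\phi'_\delta(U)U_\xi^2$ to contribute strictly in the limit. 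Equivalently, if $\phi$ were constant on some interval $[\tau_1,\tau_2]$\,, then $U_\xi\equiv 0$ on $\{U=0\}$, contradicting the smooth sign-change of~$U$ forced by mass balance.

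The main obstacle I foresee is the second step, turning nonincreasing into \emph{strictly} decreasing: a direct $\operatorname{sgn}$ manipulation yields only $\leq$\,. The way around this is to rely on the pathwise parabolic character of the equation for~$U$ (noise has been absorbed) and invoke classical strong-maximum-principle/Kato-inequality refinements for scalar parabolic equations with the convection coefficient $\tfrac12(u^1+u^2)+\tfrac12\xi$\,, which are bounded on compact spatial sets by the $L^\infty$-estimates of Section~\ref{sec:L-infty}. All other steps are routine once the stochastic integral is removed by subtraction.
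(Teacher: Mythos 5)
Your proposal is correct and follows essentially the same route as the paper: subtract the two solutions so the additive noise cancels, obtain the pathwise linear convection--diffusion equation $U_\tau=U_{\xi\xi}-\tfrac12[(u^1+u^2-\xi)U]_\xi$ with coefficients bounded (pathwise) by the $L^\infty$ estimates, and then invoke the deterministic $L^1$-contraction and strict-decrease theory of Zuazua and Kim--Tzavaras. The only difference is that the paper delegates the entire deterministic argument (Kato inequality, mass balance forcing a sign change, strong maximum principle for strictness) to those references, whereas you sketch it explicitly.
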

\begin{proof}
Let $U(\tau,\xi)=u^{1}(\tau,\xi)-u^{2}(\tau,\xi)$, then we have the following linear equation
\begin{equation*}
U_{\tau}=U_{\xi\xi}-\tfrac12[(u^{1}+u^{2}-\xi)U]_{\xi}\,.
\end{equation*}
By the estimates in section \ref{sec:selfsimilar}\,,
$(u^{1}+u^{2}-\xi)_{\xi}$ is bounded by a random constant. Then for any a fixed $\omega\in\Omega_{0}$\,, the result is followed by the discussion for deterministic system~\cite{Zuazua94,Kim01}.
\end{proof}

Now for any $\mu\in\mathcal{M}$ with form (\ref{e:split}),   let  $\bar{\mu}$ be  a stationary measure  and~(\ref{e:convergence}) holds. Suppose $\bar{\mu}'$ is another stationary measure of $P_{\tau}$ such that for some $\tau_{n}'\rightarrow \infty$\,, $n\rightarrow\infty$\,, 
\begin{equation}
P_{\tau_{n}'}\mu\rightarrow \bar{\mu}'\,,\quad n\rightarrow\infty\,.
\end{equation}
Denote by $\bar{u}(\tau,\xi)$ and $\bar{u}'(\tau,\xi)$ the solutions of equation (\ref{e:tau-xi-Burgers}) with initial value $\bar{u}^{1}(\xi)$ and $\bar{u}^{2}(\xi)$ distributes as $\bar{\mu}$ and $\bar{\mu}'$ respectively. Then
\begin{equation*}
\int_{\R}\bar{u}^{1}(\xi)\,d\xi=\int_{\R}\bar{u}^{2}(\xi)\,d\xi\,.
\end{equation*}
By Lemma \ref{lem:contraction}, the function 
\begin{equation*}
\int_{\R}|\bar{u}(\tau,\xi)-\bar{u}'(\tau,\xi)|\,d\xi
\end{equation*}
is almost surely strictly decreasing in $\tau$ which contradicts stationary of $\bar{u}$ and~$\bar{u}'$. Then we deduce the following result. 
\begin{theorem}\label{thm:global}
For any $u^{0}\in L^{2}(K)\cap L^{\infty}(\R)$, the solution $u(\tau,\xi)$ to equation~(\ref{e:tau-xi-Burgers}) with initial value $u^{0}$ converges in distribution\,, as $\tau\rightarrow\infty$\,,  to $\bar{u}$ in space $L^{2}(K)$ which is the unique stationary solution to equation (\ref{e:tau-xi-Burgers}) with 
\begin{equation*}
\int_{\R}\bar{u}(\tau,\xi)\,d\xi=\int_{\R}u^{0}(\xi)\,d\xi\,.
\end{equation*}
\end{theorem}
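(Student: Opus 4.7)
\textbf{Proof proposal for Theorem~\ref{thm:global}.}
The plan is to promote the subsequential convergence (\ref{e:convergence}) obtained from Bogolyubov--Krylov tightness to convergence along the full net $\tau\to\infty$, with the gap bridged by the $L^1$-contraction Lemma~\ref{lem:contraction}. The three ingredients we already have in hand are: the uniform $H^1(K)$-estimates of Section~\ref{sec:selfsimilar} (giving tightness in $L^2(K)$ via the compact embedding of Lemma~\ref{lem:Kavian}); the Markov semigroup $\{P_\tau\}$ on $\mathcal{M}_2$; and the strict decrease of $\int_{\R}|u^1(\tau,\xi)-u^2(\tau,\xi)|\,d\xi$ for distinct initial data with the same mass.

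First I would fix $u^0\in L^2(K)\cap L^\infty(\R)$, set $M=\int_\R u^0(\xi)\,d\xi$, and write the associated law in the form $\mu=\delta_{Me_0}*\mu_s$ as in~(\ref{e:split}); this is legitimate because the projection onto $E_0$ is deterministic thanks to $du_0=0$. The uniform $H^1(K)$-bound gives tightness of $\{P_\tau\mu\}_{\tau\geq 0}$ in $L^2(K)$, so every sequence $\tau_n\to\infty$ admits a weakly convergent subsequence with limit $\bar\mu\in\mathcal M_2$, and the Feller property of $P_\tau$ combined with a standard Krylov--Bogolyubov argument forces any such limit to satisfy $P_\tau\bar\mu=\bar\mu$. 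By Lemma~\ref{lem:baru-u} the limit is supported on functions of mass $M$.

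The key step, and the main obstacle, is to rule out two distinct cluster points $\bar\mu\neq\bar\mu'$. Coupling via the same driving Wiener process, let $\bar u$ and $\bar u'$ be stationary solutions of~(\ref{e:tau-xi-Burgers}) with initial laws $\bar\mu$ and $\bar\mu'$ respectively; both have mass $M$ almost surely, so Lemma~\ref{lem:contraction} applies and
\begin{equation*}
\phi(\tau,\omega)=\int_\R|\bar u(\tau,\xi,\omega)-\bar u'(\tau,\xi,\omega)|\,d\xi
\end{equation*}
is almost surely strictly decreasing in~$\tau$. On the other hand, stationarity of both $\bar u$ and $\bar u'$ makes the distribution of $\phi(\tau,\cdot)$ independent of~$\tau$, whence $\EX\phi(\tau)$ is constant; strict monotonicity is then incompatible with this unless $\phi\equiv 0$ almost surely, forcing $\bar u\stackrel{d}{=}\bar u'$ and hence $\bar\mu=\bar\mu'$. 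The subtle point here is ensuring we may realise the two stationary measures on the same probability space in such a way that Lemma~\ref{lem:contraction} applies pathwise; this is achieved by evolving both initial conditions under the same $W$, exactly the setup of that lemma.

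Having ruled out multiple cluster points, every weakly convergent subsequence of $\{P_\tau\mu\}$ has the same limit $\bar\mu$, so by tightness the full family $P_\tau\mu$ converges weakly to $\bar\mu$ as $\tau\to\infty$. Finally, identifying $\bar\mu$ with the law of the unique stationary solution of mass~$M$ (as furnished by Lemma~\ref{lem:unique-stationary} in the small case, and by the uniqueness argument above in general) yields the claimed convergence in distribution in $L^2(K)$, and the mass constraint $\int_\R \bar u(\tau,\xi)\,d\xi = \int_\R u^0(\xi)\,d\xi$ follows from the conservation law $du_0=0$ passing to the limit.
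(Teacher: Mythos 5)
Your proposal is correct and follows essentially the same route as the paper: tightness of $\{P_\tau\mu\}$ from the uniform $H^1(K)$ estimates, identification of subsequential weak limits with stationary measures of the same mass, and uniqueness of the limit by applying the almost-sure strict $L^1$-contraction of Lemma~\ref{lem:contraction} to two stationary solutions driven by the same noise, which is incompatible with stationarity unless they coincide. Your explicit remarks on the coupling and on why strict monotonicity contradicts the $\tau$-invariance of the law of $\phi(\tau)$ merely spell out steps the paper leaves implicit.
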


Next we show the above convergence in distribution is in fact a convergence in probability. For this we need the following fact stated  by Gy\"ongy and Krylov~\cite{GK96}.  Let $X$ be a Polish space with Borel sigma algebra. A sequence $\{X_{n}\}$ of $X$-valued random variables converges in probability if and only if for every pair of subsequences $\{X_{m}\}$ and $\{X_{l}\}$\,, there exists $X\times X$-valued subsequence $Z_{k}=(X_{m(k)}, X_{l(k)})$ converging  in distribution to a random variable $Z$ supported on $\{(x, y)\in X\times X: x=y \}$\,.  

Now for solution $u(\tau,\xi)$ to equation~(\ref{e:tau-xi-Burgers}), consider any subsequences $\{u(\tau_{m})\}$ and $\{u(\tau_{l})\}$. By the Theorem \ref{thm:global}, both subsequences converges in distribution to a random variable supported on 
\begin{equation*}
\{(u, v)\in (L^{2}(K)\cap L^{\infty}(\R))\times (L^{2}(K)\cap L^{\infty} (\R)): u=v\}.
\end{equation*}
Then we draw the following result
\begin{coro}\label{cor:global}
For any solution $u(\tau,\xi)$ to equation~(\ref{e:tau-xi-Burgers}) with initial value $u^{0}\in L^{2}(K)\cap L^{\infty}(\R)$, there is a unique stationary solution  $\bar{u}(\tau,\xi)$ such that  
 \begin{equation*}
\|u(\tau,\xi)-\bar{u}(\tau,\xi)\|_{L^{2}(K)}\rightarrow 0\,,\quad \text{in probability as}\ \tau\rightarrow\infty\,.
\end{equation*}
\end{coro}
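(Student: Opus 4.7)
The plan is to verify the Gyöngy--Krylov criterion recalled in the paragraph immediately preceding the corollary: for any pair of subsequences $\{u(\tau_m)\}$ and $\{u(\tau_l)\}$ of our trajectory, I would exhibit a further joint subsequence $(u(\tau_{m(k)}), u(\tau_{l(k)}))$ converging in distribution to a random variable supported on the diagonal of $(L^2(K)\cap L^\infty(\R))\times (L^2(K)\cap L^\infty(\R))$. Convergence in probability then follows.

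First, I would set up tightness. The $H^1(K)$ estimates of Section~2.2.2 show that $\{u(\tau,\xi)\}_{\tau\geq 0}$ is uniformly bounded in $H^1(K)$ in expectation, and the compact embedding $H^1(K)\subset L^2(K)$ of Lemma~\ref{lem:Kavian} gives tightness of each marginal $\mathcal{L}(u(\tau_m))$ and $\mathcal{L}(u(\tau_l))$ on $L^2(K)$; tightness of the joint laws $\mathcal{L}(u(\tau_m), u(\tau_l))$ on the product space is then automatic. Extract a subsequence $(u(\tau_{m(k)}), u(\tau_{l(k)}))$ converging in distribution to some limit $(Z_1, Z_2)$.

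Second, I would identify the marginals. By Theorem~\ref{thm:global}, each of $\mathcal{L}(u(\tau_{m(k)}))$ and $\mathcal{L}(u(\tau_{l(k)}))$ converges weakly to $\bar{\mu}$, the law of the unique stationary solution $\bar{u}$ with mass $M=\int_\R u^0(\xi)\,d\xi$. So $Z_1$ and $Z_2$ each have distribution $\bar{\mu}$, and in particular both have mass $M$ almost surely.

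Third -- and this is the main obstacle -- I have to show $(Z_1,Z_2)$ concentrates on the diagonal. The key tool is the contraction Lemma~\ref{lem:contraction}: for any pair of solutions $u^1(\tau,\xi)$, $u^2(\tau,\xi)$ with equal mass, the $L^1(\R)$ distance $\phi(\tau)=\int_\R|u^1-u^2|\,d\xi$ is almost surely strictly decreasing. Using the Skorohod representation, I would realize the joint convergence on a common probability space, and then run both $Z_1$ and $Z_2$ as initial data forward in time under equation~\eqref{e:tau-xi-Burgers}. Both resulting processes are stationary (having distribution $\bar{\mu}$), so in particular the $L^1$ distance between them is stationary in $\tau$; combined with Lemma~\ref{lem:contraction} this forces $\phi(\tau)\equiv 0$, hence $Z_1=Z_2$ almost surely. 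The delicate point is making sure the Skorohod coupling is compatible with the dynamics so that Lemma~\ref{lem:contraction} applies; one can circumvent this by working directly with the joint Markov semigroup on $(L^2(K)\cap L^\infty(\R))^2$ and noting that any invariant measure of this joint semigroup projecting to $\bar\mu$ on each factor must, by the strict contraction of $L^1$-distance along the dynamics, be supported on the diagonal.

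Finally, applying Gyöngy--Krylov yields the convergence in probability of $u(\tau,\xi)$ to $\bar{u}(\tau,\xi)$ in $L^2(K)$, as claimed. The identification of the limiting stationary solution as the unique one with $\int_\R \bar u\,d\xi=\int_\R u^0\,d\xi$ is already contained in Theorem~\ref{thm:global}.
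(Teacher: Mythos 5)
Your proposal follows the same route as the paper's own proof: the Gy\"ongy--Krylov criterion, tightness from the $H^1(K)$ estimates and the compact embedding of Lemma~\ref{lem:Kavian}, Theorem~\ref{thm:global} to identify the marginal limits as the unique stationary measure $\bar{\mu}$ with mass $\int_\R u^0\,d\xi$, and the strict $L^1$ contraction of Lemma~\ref{lem:contraction} to force the joint subsequential limit onto the diagonal. You are in fact more explicit than the paper at the one delicate step---the paper simply asserts that the joint subsequential limit is diagonal "by Theorem~\ref{thm:global}", whereas you correctly note that marginal convergence alone does not give this and supply the coupling/joint-semigroup argument through Lemma~\ref{lem:contraction} that the paper leaves implicit.
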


\begin{remark}
By the  random invariant manifold discussion in section \ref{sec:rds}, the convergence rate approximates $1/2$ after a long time. Furthermore, by the above global asymptotic convergence result, any stationary solution is uniquely determined by its part in~$E_{0}$, that is for any $u_{0}\in E_{0}$\,, there is $u_{s}=h(u_{0},\omega)$ such that  $(u_{0}, h(u_{0},\omega))$ is the unique stationary solution with $u_{0}$ part in $E_{0}$\,. Then by the local random invariant manifold discussion in section \ref{sec:rds},  equation~(\ref{e:tau-xi-Burgers}) has a global random invariant manifold which can be represented by $\{(u_{0}, h(u_{0},\omega)): u_{0}\in E_{0}\}$\,.
\end{remark}

Then the following theorem applies to the stochastic Burgers' equation~(\ref{e:txBurgers}). 
\begin{theorem}
For any solution $\mathfrak{u}(t,x)$ to stochastic Burgers' equation~(\ref{e:txBurgers}), there is a unique self-similar solution $\bar{\mathfrak{u}}(t,x)$ to equation~(\ref{e:txBurgers}) such that 
\begin{equation*}
\sqrt{t}\|\mathfrak{u}(t,x)-\bar{\mathfrak{u}}(t,x)\|_{L^{2}(\R)}\rightarrow 0\,,\quad t\rightarrow\infty\,,
\end{equation*}
in probability. 
\end{theorem}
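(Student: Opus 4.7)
My plan is to reduce this theorem to Corollary~\ref{cor:global} via the self-similarity transformation~\eqref{eq:gene}, which is a bijection between solutions of~\eqref{e:txBurgers} and solutions of~\eqref{e:tau-xi-Burgers}. Given a solution $\fu(t,x)$ to the stochastic Burgers' equation~\eqref{e:txBurgers} on $t\ge 1$, set
$u(\tau,\xi) = e^{\tau/2}\,\fu(e^\tau,\,e^{\tau/2}\xi)$,
so that $u$ satisfies~\eqref{e:tau-xi-Burgers} with initial data $u^0(\xi) = \fu(1,\xi)$. Assuming (as in the hypotheses of earlier theorems in this section) that $u^0\in L^2(K)\cap L^\infty(\R)$, Corollary~\ref{cor:global} immediately supplies a unique stationary solution $\bar u(\tau,\xi)$ of~\eqref{e:tau-xi-Burgers} with matching mass $\int_\R \bar u\,d\xi = \int_\R u^0\,d\xi$, such that $\|u(\tau)-\bar u(\tau)\|_{L^2(K)}\to 0$ in probability as $\tau\to\infty$.

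Second, I define the candidate self-similar solution by undoing the transformation,
$\bar\fu(t,x) = \tfrac{1}{\sqrt t}\,\bar u(\log t,\, x/\sqrt t)$.
Stationarity of $\bar u$ under $\theta_\tau$ combined with the form of~\eqref{eq:gene} shows that $\bar\fu$ is a random self-similar solution of~\eqref{e:txBurgers}, exactly as in the construction at the end of Section~\ref{sec:selfsimilar}. Uniqueness of $\bar\fu$ given $\fu(1,\cdot)$ inherits from the uniqueness of $\bar u$ in Corollary~\ref{cor:global}, since the conserved mass $\int_\R\bar u\,d\xi$ is fixed by $u^0$.

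Third, I translate the convergence back to physical variables through the substitution $x=\sqrt t\,\xi$, $dx=\sqrt t\,d\xi$. Direct computation gives the scaling identity
\begin{equation*}
\sqrt t\,\|\fu(t,\cdot)-\bar\fu(t,\cdot)\|_{L^2(\R)}^{2}
\;=\;\|u(\tau,\cdot)-\bar u(\tau,\cdot)\|_{L^2(\R)}^{2}
\;\le\;\|u(\tau,\cdot)-\bar u(\tau,\cdot)\|_{L^2(K)}^{2},
\end{equation*}
since $K(\xi)\ge 1$. Corollary~\ref{cor:global} therefore delivers convergence in probability of the right-hand side to zero as $\tau\to\infty$, which is exactly convergence in probability of the physical-variable quantity stated in the theorem.

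The delicate point I expect is the scaling bookkeeping in the last step: one must confirm that the Jacobian $\sqrt t$ and the $e^{\tau/2}$ rescaling of $\fu$ conspire to produce exactly the factor~$\sqrt t$ appearing in the claim, and that the convergence mode (in probability rather than almost sure) survives the change of variables. The first is a direct calculation, and the second follows because the transformation is a deterministic pointwise-in-$\omega$ bijection on paths, so tightness and weak limits are preserved. If one interprets the statement with the norm to the first power rather than squared, then the argument instead needs the sharpened exponential rate highlighted in the remark following Corollary~\ref{cor:global}, supplied by the local random invariant manifold analysis of Section~\ref{sec:rds} once $u(\tau)-\bar u(\tau)$ enters the small ball $B_R(0)\subset L^2(K)$, which happens almost surely for large~$\tau$ by the global convergence.
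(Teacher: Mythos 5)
Your proposal is correct and follows exactly the route the paper intends: the theorem is stated as an immediate consequence of Corollary~\ref{cor:global}, obtained by undoing the similarity transformation~\eqref{eq:gene} and converting $L^{2}(K)$-convergence of $u-\bar u$ into the physical-variable statement. Your scaling bookkeeping is in fact more careful than the paper's --- the clean identity is $\sqrt{t}\,\|\fu-\bar\fu\|_{L^{2}(\R)}^{2}=\|u-\bar u\|_{L^{2}(\R)}^{2}$, so the statement with the norm to the first power carries a residual factor $t^{1/4}$ that must be absorbed by the exponential rate $\lambda^{*}>1/4$ from Section~\ref{sec:rds}, precisely as you note.
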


\section{Self-similarity emerges in examples of practical interest}
\label{s:sseepi}

The proof of the previous section~\ref{sec:sbe} shows one case where we can prove the emergence of stochastic self-similarity. 
However, the stochastic slow manifold framework of section~\ref{sec:ssme} strongly indicates that stochastic self-similarity emerges in a much wider class of stochastic systems. We proceed in the remaining sections to explore, albeit less rigorously, several example \spde{}s of the form~\eqref{eq:phit} and of significant physical interest.

We investigate particular stochastic systems and the self-similarity in their stochastic slow manifolds by writing the dynamics in the Hermite basis:  from the eigenbasis~\eqref{eq:eigen} we express the transformed field as the spectral expansion
\begin{equation}
u(\tau,\xi)=\sum_{k=0}^\infty u_k(\tau)e_k(\xi)=\sum_{k=0}^\infty u_k(\tau)c_kH_k(\xi/\sqrt2)G(\xi),
\label{eq:vexp}
\end{equation}
for normalisation coefficients~$c_k$ and mode amplitudes~$u_k(\tau)$.  We also write the $Q$-Wiener noise in the cylindrical expansion~\eqref{eq:dotw}. The very first difference with the rigorous analysis of section~\ref{sec:sbe} is that in this section we allow the noise to have a mean component in space; that is, in this section the noise coefficient $b_0\neq 0$\,.  The consequent direct forcing of the fundamental Gaussian mode makes an immediate qualitative difference to the long term evolution that applied scientists and engineers will appreciate.

\subsection{Stochastically forced diffusion}\label{subsec:diffusion}
Obtain the simplest stochastic self similarity for additive noise and when there is no nonlinearity: in this subsection we set $f=0$ and $g=1/t$ in \spde~\eqref{eq:prob}.  Substitute the Hermite expansions~\eqref{eq:vexp} and~\eqref{eq:dotw} into the transformed \spde~\eqref{eq:phit} and equate coefficients of the basis functions~$e_k(\xi)$ to find the decoupled \sde\ system
\begin{equation}
\dot u_k=-\rat k2u_k+b_k\dot w_k\,,\quad k=0,1,2,\ldots\,.
\label{eq:oulin}
\end{equation}
For mode numbers $k\geq1$\,, the \sde~\eqref{eq:oulin} describes an Ornstein--Uhlenbeck process and so the amplitudes of the fast modes are $u_k=u_k(0)e^{-k\tau/2}+b_k\Z{-k\tau/2}\dot w_k$ where we define the stochastic convolution 
\begin{equation*}
\Z{-\beta\tau}\dot w_k=\int_{0}^\tau e^{-\beta(\tau-\sigma)}dw_k(\sigma)
\end{equation*}
which is the Ornstein--Uhlenbeck process satisfying the \sde\ $dz=-\beta z\,d\tau+dw_k(\tau)$.  Consequently, exponentially quickly $u_k\to b_k\Z{-k\tau/2}\dot w_k$ as log-time $\tau\to\infty$\,, and this exponential approach is~\Ord{e^{-\tau/2}} due to the decay of the leading `fast' component of the initial compact release.  

But the slow mode satisfies $\dot u_0=b_0\dot w_0$ with solution $u_0=a(\tau)=b_0w_0(\tau)$.
Hence, in this case the emergent stochastic slow manifold of the \spde~\eqref{eq:phit} is
\begin{equation*}
u=\frac{a}{2\sqrt\pi}e^{-\xi^2/4}
+\sum_{k=1}^\infty e_k(\xi)\Z{-k\tau/2}\dot w_k
\end{equation*}
where the amplitude~$a=b_0w_0(\tau)=b_0w_0(\log t)$.
That is, under the direct forcing of the Gaussian structure, the stretched field~$u$ undergoes a random walk in the amplitude of the Gaussian, while exhibiting zero-mean fluctuations due to the past history of the other noise components.
Since the Wiener process $w_0(\tau)=\Ord{\tau^{1/2}}$, this predicts the original stochastic self similarity field will be $\fu=\Ord{t^{-1/2}(\log t)^{1/2}}$ as $t\to\infty$\,.

For comparison, recall that section~\ref{sec:sbe} proves the emergence of a stochastically stationary distribution in the case when there is no mean noise component.

The self-similar random walk emerges from transients of relative magnitude $\Ord{e^{-\tau/2}}=\Ord{t^{-1/2}}$; that is, of absolute magnitude in field~$\fu$ of~\Ord{t^{-1}}.  Section~\ref{sec:mstort} returns to this case to argue that stochastically moving the space origin and stretching time empowers us to eliminate the leading two stable modes $u_1$~and~$u_2$.  The new view of Section~\ref{sec:mstort} is a stochastic self-similarity that emerges somewhat quicker, with relative transients $\Ord{e^{-3\tau/2}}=\Ord{t^{-3/2}}$; that is, of absolute magnitude in field~$\fu$ of~\Ord{t^{-2}}.

\subsection{Cubic reaction enhances decay}

Here consider the case of cubic reaction, $f=-\fu^3$, and additive noise, $g=1/t$\,.  The stochastic slow manifold shows that not only does the cubic reaction aid the decay, but also noise-noise interactions increase the exponent in the self-similar decay.

\subsubsection{Change to Hermite basis}

For simplicity, initially just project the dynamics of the \spde~\eqref{eq:phit} onto the first three Hermite modes; section~\ref{sec:tssm3} uses computer algebra to implement more modes.  Recall the Hermite functions are $H_k(\zeta)=(-1)^ke^{\zeta^2/2}\Dn \zeta k{e^{-\zeta^2/2}}$: the first few are $H_0(\zeta)=1$\,, $H_1(\zeta)=\zeta$ and $H_2(\zeta)=\zeta^2-1$\,.  As given by  equation~\eqref{eq:vexp}, we expand the solution field in the corresponding basis.
%\begin{equation*}
%u=\frac1{\pi^{1/4}}\left[
%u_0(\tau)H_0(\rxi)
%+u_1(\tau)H_1(\rxi) 
%+\sqrt2u_2(\tau)H_2(\rxi) \right]e^{-\xi^2/4}.
%\end{equation*}
The complication is the cubic reaction term
\begin{equation*}
u^3=\sum_{l,m,n}u_lu_mu_n c_lc_mc_n H_l(\rxi)H_m(\rxi)H_n(\rxi)\frac{e^{-3\xi^2/4}}{8\pi^{3/2}}\,.
\end{equation*}
We want to expand $u^3=\sum_k d_ke_k(\xi)$, so take the weighted inner product of this cubic reaction with $e_k=c_kH_k(\rxi)G(\xi)$ to determine the coefficient
\begin{align*}
d_k&{}=\sum_{l,m,n}u_lu_mu_n c_kc_lc_mc_n\int_{-\infty}^{\infty}  H_k(\rxi)H_l(\rxi)H_m(\rxi)H_n(\rxi)\frac{e^{-3\xi^2/4}}{16\pi^{2}}\,d\xi
\\&{}
=\sum_{l,m,n}u_lu_mu_n \frac{c_kc_lc_mc_n}{16\pi^{2}}\sqrt{\frac23}\int_{-\infty}^{\infty}  H_k(\rat\zeta{\sqrt3})H_l(\rat\zeta{\sqrt3})H_m(\rat\zeta{\sqrt3})H_n(\rat\zeta{\sqrt3}) {e^{-\zeta^2/2}}\,d\zeta
\end{align*}
upon substituting $\xi=\sqrt{2/3}\,\zeta$\,.  Expressing the Hermite functions as polynomials in~$\zeta$, expanding the quadruple products into a high order polynomial in~$\zeta$, and then recasting the polynomial in a sum of Hermite functions, the coefficient of the~$H_0(\zeta)$ mode determines the above integral.  Omitting details, upon truncating the above sums over modes to just the first three modes, $k,l,m,n=0,1,2$\,, we find coefficients in the cubic~$u^3$ are
\begin{align*}
&d_0=\frac1{\sqrt{3\pi}}\big(
\rat12u_0^3 +\rat12u_0u_1^2
-\rat{\sqrt2}9u_2^3 +\rat12u_0u_2^2 -\rat1{\sqrt2}u_0^2u_2
\big),
\\&
d_1=\frac1{\sqrt{3\pi}}\big(
\rat12u_0^2u_1 +\rat1{6}u_1^3
+\rat16u_1u_2^2
\big),
\\&
d_2=\frac1{\sqrt{3\pi}}\big(
-\rat{\sqrt2}{6}u_0^3 +\rat12u_0^2u_2 -\rat{\sqrt2}6u_0u_2^2
+\rat5{54}u_2^3 +\rat1{6}u_1^2u_2
\big).
\end{align*}
Consequently, the stochastic system, when projected onto the first three modes, is approximated by the set of \sde{}s
\begin{align}
&\dot u_0=\phantom{-\rat12u_0}+b_0\dot w_0
+\frac1{\sqrt{3\pi}}\big(
-\rat12u_0^3-\rat12u_0u_1^2
+\rat{\sqrt2}9u_2^3-\rat12u_0u_2^2+\rat1{\sqrt2}u_0^2u_2
\big) ,
\nonumber\\&\dot u_1=-\rat12u_1 +b_1\dot w_1
+\frac1{\sqrt{3\pi}}\big(
-\rat12u_0^2u_1-\rat1{6}u_1^3
-\rat16u_1u_2^2
\big) ,
\nonumber\\&\dot u_2=-u_2 +b_2\dot w_2
+\frac1{\sqrt{3\pi}}\big(
\rat{\sqrt2}{6}u_0^3-\rat12u_0^2u_2+\rat{\sqrt2}6u_0u_2^2
-\rat5{54}u_2^3-\rat1{6}u_1^2u_2
\big) .
\label{eq:3sde}
\end{align}
Computer algebra checks this derivation, and also computes the corresponding systems for higher order projections. 

\subsubsection{A normal form separates fast and slow modes}
\label{sec:nfsfsm}

As a preliminary to the more complete construction of the stochastic slow manifold in section~\ref{sec:tssm3}, this subsection shows that a stochastic coordinate transform~\cite[Ch.~8]{Arnold03} separates the stochastic slow and fast modes in the projected system~\eqref{eq:3sde}.

In the projected system~\eqref{eq:3sde} the dynamical variables~$u_k(\tau)$ are linearly diagonalised with constant coefficients, and the nonlinearities are of multinomial form.  In such a case, constructing the necessary stochastic coordinate transform is routine~\cite{Roberts06k}.  Indeed, a web service~\cite{Roberts07d} analyses the system~\eqref{eq:3sde} to construct a near identity, stochastic, coordinate transformation from variables~$u_k(\tau)$ to new variables~$U_k(\tau)$. The constructed transformation, beginning
\begin{align}
&u_0\approx U_0
+\frac1{\sqrt{3\pi}}\big(-\rat1{\sqrt2}U_0^2U_2+\rat1{4}U_0U_2^2
+\rat1{2}U_0U_1^2-\rat{\sqrt2}{54}U_2^3\big),
\nonumber\\&u_1\approx U_1
+\frac1{\sqrt{3\pi}}\big(\rat1{6}U_1^3+\rat1{12}U_1U_2^2\big)
+b_1 \Z{-\tau/2} \dot w_1\,,
\nonumber\\&u_2\approx U_2
+\frac1{\sqrt{3\pi}}\big(\rat{\sqrt2}{6}U_0^3
-\rat{\sqrt2}{6}U_0U_2^2
+\rat5{108}U_2^3+\rat1{6}U_1^2U_2\big)
+b_2 \Z{-\tau} \dot w_2\,,
\label{eq:3sxf}
\end{align}
transforms the system of \sde{}s~\eqref{eq:3sde} to the equivalent system
\begin{align}
\sqrt{3\pi}\dot U_0\approx{}&
-\rat12U_0^3 +\big(\sqrt{3\pi} b_0\dot w_0+\rat1{\sqrt2}U_0^2 b_2\dot w_2\big)
\nonumber\\&{}+U_0\left[ - b_1^2\dot w_1\Z{-\tau/2}\dot w_1
-\rat12 b_2^2\dot w_2\Z{-\tau}\dot w_2
+ \sqrt2b_0b_2\dot w_0\Z{-\tau}\dot w_2
\right],
\nonumber\\
\sqrt{3\pi}\dot U_1\approx{}&
-\rat12\sqrt{3\pi} U_1 -\rat12U_0^2U_1
\nonumber\\&{}+U_1\left[ - b_1^2\dot w_1\Z{-\tau/2}\dot w_1
-\rat16 b_2^2\dot w_2\Z{-\tau}\dot w_2
\right],
\nonumber\\
\sqrt{3\pi}\dot U_2\approx{}&
-\sqrt{3\pi} U_2-\rat12U_0^2U_2 
+\big(-\rat1{6}U_1^2 b_2\dot w_2
+\rat{\sqrt2}{3}U_0U_1 b_2\dot w_2\big)
\nonumber\\&{}+U_2\left[ -\rat13 b_1^2\dot w_1\Z{-\tau/2}\dot w_1
-\rat5{18} b_2^2\dot w_2\Z{-\tau}\dot w_2
+\rat13 b_0b_2\dot w_0\Z{-\tau}\dot w_2
\right].
\label{eq:3sdes}
\end{align}
Alternatively, one can straightforwardly confirm the order of accuracy of \eqref{eq:3sxf}~and~\eqref{eq:3sdes} by simply substituting them into the projected system of \sde{}s~\eqref{eq:3sde}.

The long term dynamics are readily apparent from the transformed system~\eqref{eq:3sdes}.  We immediately deduce the existence of a slow manifold, its emergence, and its evolution.
Observe that $U_1=U_2=0$ is invariant in the transformed system~\eqref{eq:3sdes}; a stochastic coordinate transform such as~\eqref{eq:3sxf} may always be found to create such invariance to any specified order~\cite{Arnold03b, Imkeller02, Roberts06k}. Due to the exponential decay of $U_1$~and~$U_2$ in the deterministic parts of~\eqref{eq:3sdes}, provided the magnitudes~$b_k$ of the stochastic effects are not too large, the invariant manifold $U_1=U_2=0$ will be almost surely exponentially quickly attractive.  The emergent stochastic slow manifold is thus $U_1=U_2=0$\,.  Evolution on the stochastic slow manifold is given by the first line of~\eqref{eq:3sdes}, for all time. Substituting $U_1=U_2=0$ in the transform~\eqref{eq:3sxf} then gives the shape of the stochastic slow manifold in the $u_k$-variables.

The transformed \sde{}s~\eqref{eq:3sdes} indicate that the cubic nonlinearity enhances the rate of attraction to the stochastic slow manifold: the deterministic part of the two fast modes are $\dot U_k\approx-\big[\rat12 k+\frac12U_0^2/(\sqrt3\pi)\big]U_k$\,.  But the most important aspect is that, by continuity, there exists a finite domain near the $u=0$ equilibrium such that only very rare stochastic events could overcome the exponential attraction to the stochastic slow manifold.  Thus we expect the stochastic slow manifold to almost always emerge from some finite domain of initial conditions.

Consequently, the following `shadow' modelling applies.  The \sde\ system~\eqref{eq:3sdes} and the stochastic transform~\eqref{eq:3sxf} together describe a stochastic process in the $u_k(\tau)$~coefficients of the Hermite basis functions~$e_k(\xi)$, and thus describes a stochastic process in the field~$u(\tau,\xi)$.  By the asymptotic construction, this stochastic process is in an asymptotic sense `close to' or `shadows' the original \spde~\eqref{eq:phit}, especially when carried out to better resolution as we do in the next section~\ref{sec:tssm3}.  Now, in physical applications the original coefficients, functional forms and noise spectrum in the \spde~\eqref{eq:phit} are never known exactly.  Thus in physical applications, predictions deduced from systematic shadowing stochastic processes such as \eqref{eq:3sdes}~and~\eqref{eq:3sxf}, are as useful as predictions from the \spde~\eqref{eq:phit}.

On the emergent stochastic slow manifold, the system~\eqref{eq:3sde} evolves in the long term according to the first \sde\ of the system~\eqref{eq:3sdes}.  The most important part of the quadratic noise-noise interaction terms in~\eqref{eq:3sdes} is their effect upon the mean drift.  Analysis of such noise-noise interactions~\cite{Roberts05c} shows they generate drift and fluctuations: neglecting these fluctuations the slow mode of the \sde{}s~\eqref{eq:3sdes} becomes
\begin{equation}
\sqrt{3\pi}\dot a\approx
-(\rat12b_1^2+\rat14b_2^2)a-\rat12a^3 
+\sqrt{3\pi} b_0\dot w_0
+\rat1{\sqrt2}a^2 b_2\dot w_2\,,
\label{eq:ssm3}
\end{equation}
where we use $U_0=a$ to denote the amplitude of the stochastic self similar solution.
In this \sde: the direct forcing~$b_0\dot w_0$ promotes a random walk among the self-similar profiles, as for linear diffusion; the cubic reaction~$-\rat12a^3$ reflects the cubic reaction of the original physics; but the  $-(\rat12b_1^2+\rat14b_2^2)a$ term accounts for noise-noise fluctuations enhancing the exponent of the similarity decay rate.  Just solving $\sqrt3\pi\dot a\approx -(\rat12b_1^2+\rat14b_2^2)a$ gives amplitude $a\propto e^{-\alpha\tau}=t^{-\alpha}$ for exponent $\alpha=(\rat12b_1^2+\rat14b_2^2)/\sqrt{3\pi}$ and hence predicts the original field $\fu\propto t^{-1/2}a \propto t^{-1/2-\alpha}$.
Physically, because of the nonlinear shape of the cubic reaction, and in comparing fluctuations that enhance the local field with those that decrease the local field, the first generates reactions that are slightly larger than the reaction is decreased by the second.  Thus such a cubic reaction enhances the similarity decay rate through noise-noise interactions.  Our systematic resolution of the noise-noise interactions discerns this noise enhanced decay.

\subsubsection{The emergent slow manifold of stochastic self-similarity}
\label{sec:tssm3}

When, as for the \spde~\eqref{eq:phit}, there is a large number of noise excited fast modes, then the full normal form coordinate transform is impossible to construct. Fortunately, we just need to construct the stochastic slow manifold part of the coordinate transform, as is summarised in this section. Here we project the \spde~\eqref{eq:phit} onto the first nine modes and construct and interpret the resulting stochastic slow model.

Computer algebra obtains a system of \sde{}s for $\dot u_0,\ldots,\dot u_8$, analogous to the system~\eqref{eq:3sde}, but which are far too involved to record here.  Established methods~\cite{Roberts05c, Roberts06k}, available in computer algebra via the web~\cite{Roberts09c}, then analyses the system of \sde{}s to determine that the stochastic slow manifold is approximately
\begin{align*}
&&&u_0\approx a+\frac{a^2}{\sqrt{3\pi}}
\big[-\rat1{\sqrt2}b_2\Z{-\tau}\dot w_2
+\rat1{2\sqrt6}b_4\Z{-2\tau}\dot w_4
\\&&&\qquad\qquad\qquad{}
-\rat{\sqrt5}{27}b_6\Z{-3\tau}\dot w_6
+\rat{\sqrt{70}}{216}b_8\Z{-4\tau}\dot w_8\big],
\\&u_1\approx  b_1\Z{-\tau/2}\dot w_1 \,,
&&u_2\approx +\frac{a^3}{3\sqrt{6\pi}}+ b_2\Z{-\tau}\dot w_2 \,,
\\&u_3\approx  b_3\Z{-3\tau/2}\dot w_3 \,,
&&u_4\approx -\frac{a^3}{18\sqrt{2\pi}}+ b_4\Z{-2\tau}\dot w_4 \,,
\\&u_5\approx  b_5\Z{-5\tau/2}\dot w_5 \,,
&&u_6\approx +\frac{\sqrt5a^3}{81\sqrt{3\pi}}+ b_6\Z{-3\tau}\dot w_6 \,,
\\&u_7\approx  b_7\Z{-7\tau/2}\dot w_7 \,,
&&u_8\approx -\frac{\sqrt{70}a^3}{648\sqrt{3\pi}}+ b_8\Z{-4\tau}\dot w_8 \,.
\end{align*}
As shown explicitly for the low order system in section~\ref{sec:nfsfsm}, by continuity, and except for rare events, this stochastic slow manifold will almost always emerge from all initial conditions in its neighbourhood.

The methodology implemented in the web service~\cite{Roberts09c} also constructs the stochastic evolution on this stochastic slow manifold.  Effects quadratic in noise are very complicated---too complicated to record here---due to the need to resolve the multitude of noise-noise interactions that occur in the fast modes~\cite{Roberts05c}. Here we retain only their cumulative drift effects of the noise-noise interactions. Computer algebra then finds the evolution to be the following more complete version of the earlier~\eqref{eq:ssm3}:
\begin{align}
\sqrt{3\pi}\dot  a\approx{}&
-\rat12 a^3
+\sqrt{3\pi} b_0\dot w_0
\nonumber\\&{}
+ a^2\left[\rat1{\sqrt2}b_2\dot w_2
-\rat{1}{2\sqrt6}b_4\dot w_4
+\rat{\sqrt5}{27}b_6\dot w_6
-\rat{\sqrt{70}}{216}b_8\dot w_8\right]
\nonumber\\&{}-a\left[\rat12b_1^2
+\rat14b_2^2 +\rat7{54}b_3^2 +\rat{19}{216}b_4^2
+\rat{17}{270}b_5^2 +\rat{47}{972}b_6^2
+\rat{131}{3402}b_7^2 +\rat{41}{1296}b_8^2
\right].
\label{eq:ssm9}
\end{align}
The first line of the stochastic slow mode~\eqref{eq:ssm9} contains the direct effects of the cubic reaction and the stochastic forcing.  The second line is a multiplicative noise term that could be replaced by one independent noise term with volatility coefficient being $\sqrt{b_2^2/2 +b_4^2/24 +5b_6^2/729 +70b_8^2/46656}$. The last line of~\eqref{eq:ssm9} enhances the self-similarity decay rate through noise-noise interactions.

A Domb--Sykes plot~\cite[e.g.]{Mercer90} of the ratio of the coefficients in the last line suggests the corresponding infinite series converges provided assumption~\eqref{e:traceQ} holds.    However, extant stochastic slow manifold theory is limited to effectively finite dimensional dynamics such as the \spde~\eqref{eq:phit} projected onto the first nine Hermite modes that we analyse here.  Nonetheless, in principle we could construct the stochastic slow manifold model to some level of approximation for any finite truncation of the noise~\eqref{eq:dotw}.

\subsection{Modelling other stochastic systems}

Section~\ref{sec:ssme} established that a stochastic slow manifold approach illuminates a broad class of stochastic reaction-diffusion \pde{}s.  This approach is then supported by some rigorous theory in Section~\ref{sec:sbe} in a limited case, and illustrated in a relatively formal approach to an example in this section.  In the analysis here the major issue was purely the algebraic complexity.  Thus the main outcome of this section is to empower others to analyse any of the broad class of stochastic reaction-diffusion \pde{}s identified in section~\ref{sec:ssme} that may be of interest in specific applications.

\section{Vary the origin of space-time to improve modelling}
\label{sec:mstort}

Here reconsider linear diffusion with stochastic forcing 
\begin{equation}
\fu_t=\fu_{xx}+\fB(t,x),
\label{eq:diff}
\end{equation}
%\marginpar{Tried to put in nonlinear $f$ and $g$ but do not like the $\sqrt{T/t}$ terms that arise, so revert to just linear.}
%Reconsider the nonlinear reaction-diffusion \spde~\eqref{eq:prob} for a field~$\fu(t,x)$, reproduced here
%\begin{equation*}
%\fu_t=\fu_{xx}+ f(\partial_x,\fu,t^{-1/2})+ g(\partial_x,\fu,t^{-1/2})B(t,x),
%%\label{eq:prob}
%\end{equation*}
on an infinite spatial domain in one dimension and for some yet to be defined noise process~$\fB$.  Indeed the analysis of the next two sections almost always uses classic calculus and so also apply to deterministic, time dependent, forcing~$\fB$ as well as to stochastic forcing. Following a compact release of material it is natural to place the origin of the spatial coordinate system at about the location of the release.  Then the spread of material over time will be approximately symmetric about the spatial origin.  In deterministic diffusion-based systems Suslov and Robert~\cite{Suslov98a} showed how to choose the space origin for a given compact release: the optimal choice eliminates the slowest $t^{-1}$-transients in the approach to the self similarity solution.

However, when material is stochastically added\slash removed\slash moved over time, a marked asymmetry in the distribution about the origin generically develops dynamically: indeed, in stochastic dynamics such asymmetry is likely to be a random walk, as confirmed below, that grows stochastically like~$\sqrt t$.  Here we allow the reference point of the stochastic self-similarity to evolve in time to cater for such overall movement of the effective origin in space of the self-similar regime.  The proposed choice removes the longest lasting memory integrals in the stochastic self-similar solution.

Analogously it is expedient to change time.  In deterministic diffusion-based systems, Suslov and Robert~\cite{Suslov98a} showed how choosing the origin in time empowers one to eliminate the next slowest $t^{-3/2}$-transients in the approach to self similarity by better matching the variance. Correspondingly, for stochastic systems we seek to remove the corresponding memory integrals.  But, instead of expressing the adaptation as a change in the origin of time, here we allow the relationship between real time and effective self similarity time to vary dynamically.

Let's generalise the log-time transformation~\eqref{eq:gene}~\cite{Wayne94}.  Here scale the solution and space-time by
\begin{equation}
\tau=\log T\,,\quad
\xi=\frac{x-X(T)}{\sqrt T}\,,\quad
t=t(T),\quad
\fu=\frac1{\sqrt T}u(\tau,\xi).
\label{eq:genes}
\end{equation}
Then $X$~gives the effective centre in space of the spreading material at any time, and $T$~defines a pseudo-time that accounts for modifications to the effective width of the spreading material; we expect $T\approx t$ to some level of approximation. Under the coordinate transformation~\eqref{eq:genes}, partial derivatives become
\begin{equation*}
\D x{}=\frac1{\sqrt T}\D \xi{}
\quad\text{and}\quad
\D t{}=\frac{{T'}}{T}\left[ \D \tau{}
-\left(\rat{1}{2}\xi+{X'}{\sqrt T}\right)\D \xi{}\right].
\end{equation*}
We reserve overdots for the derivative~$d/d\tau$ so use $X'=dX/dT$ and $T'=dT/dt=1/t'$. Substituting into the \spde~\eqref{eq:prob} gives
\begin{align*}&
-\frac{{T'}}{2T^{3/2}}u+\frac{{T'}}{T^{3/2}}\left[ \D \tau u
-\left(\rat{1}{2}\xi+{X'}{\sqrt T}\right)\D \xi{u}\right]
%\\&
=\frac1{T^{3/2}}\DD\xi u
%+\frac1{T^{3/2}}f(\partial_\xi,u,\sqrt{T/t})
%+\frac1Tg(\partial_\xi,u,\sqrt{T/t})B(t,x).
+\fB(t,x)\,.
\end{align*}
Rearranging, using the definition~\eqref{eq:lphi} of operator~$\cL$, gives the \spde\ in similarity variables as
\begin{align*}
u_\tau={}&
\cL u +X'\sqrt Tu_\xi+(t'-1)u_{\xi\xi}
%\\&{}
%+t'f(\partial_\xi,u,\sqrt{T/t})
%+g(\partial_\xi,u,\sqrt{T/t})t'{T^{1/2}}B(t,x).
+t'T^{3/2}\fB(t,x).
\end{align*}
Now assume that the original noise process is such that $t'T^{3/2}\fB(t,x)=\dot W(\tau,\xi)$ for some cylindrical $Q$-Wiener process~$W(\tau,\xi)$.  Then the \spde\ in similarity variables becomes
\begin{equation}
u_\tau=\cL u +X'\sqrt Tu_\xi+(t'-1)u_{\xi\xi}
%+t'f(\partial_\xi,u,\sqrt{T/t})
%+g(\partial_\xi,u,\sqrt{T/t})
+\dot W.
\label{eq:xspde}
\end{equation}

\paragraph{Write in the Hermite basis}
Use the cylindrical expansion~\eqref{eq:dotw} for the noise~$\dot W$ and the spectral expansion~\eqref{eq:vexp} for the similarity field~$u(\tau,\xi)$.  The properties of Hermite polynomials imply that the basis function derivatives $e_{k\xi}=-e_{k+1}/\sqrt2$ and $e_{k\xi\xi}=e_{k+2}/2$\,.  Then equating coefficients of~$e_k$, the linearised version of the above \spde~\eqref{eq:xspde} becomes the component \sde{}s
\begin{equation*}
\dot u_k=-\rat 12ku_k-\rat1{\sqrt 2}X'\sqrt Tu_{k-1}+\rat12(t'-1)u_{k-2}+b_k\dot w_k\,.
\end{equation*}
In particular, the first three \sde{}s are
\begin{align}
&\dot u_0=b_0\dot w_0\,,\label{eq:v0w}
\\&\dot u_1=-\rat12u_1-\rat1{\sqrt 2}X'\sqrt Tu_{0}+b_1\dot w_1\,, \label{eq:v1w}
\\&\dot u_2=-u_2-\rat1{\sqrt 2}X'\sqrt Tu_{1}+\rat12(t'-1)u_{0}+b_2\dot w_2\,.
\label{eq:v2w}
\end{align}
As in section~\ref{s:sseepi}, solving the first \sde~\eqref{eq:v0w} gives the random walk of the amplitude $u_0=u_0(0)+b_0w_0(\tau)$.  However, interesting results become clearer in this section by making $b_0=0$\,, the case of conservative noise, so that $u_0=a$ is constant.  

\paragraph{Eliminate the leading transient}
With static~$X$ the solution of the second \sde~\eqref{eq:v1w} would involve fluctuations generated by the memory convolutions~$\Z{-\tau/2}\dot w_1$.  It is these fluctuations we remove by varying~$X$. Let's explore the case $b_0=0$ and $u_0=a$\,. Then setting $-\rat1{\sqrt 2}X'\sqrt Tu_{0}+b_1\dot w_1=0$ eliminates the forcing of~$u_1$ in the second \sde~\eqref{eq:v1w}.  That is, set the spatial `origin' $X=\sqrt 2b_1\int u_0^{-1}\sqrt T\,dw_1(\tau)$.  Because $\tau=\log T$\,, then $dw_1(\tau)=dw_1(T)/\sqrt T$\,.  Thus remove the forcing of~$u_1$ in~\eqref{eq:v1w} by choosing 
\begin{equation}
X=X(0)+\sqrt 2b_1\int_0^T \frac1{u_0}\,dw_1(T_1)
=X(0)+\frac{\sqrt2b_1}{a}w_1(T).
\end{equation}
Additionally choosing the initial condition~$X(0)$ appropriately~\cite{Suslov98a}, then causes the mode $u_1=0$ for all time.  With these choices for the dynamically varying spatial `origin'~$X(T)$ of the similarity transform we eliminate both the transient in~$u_1$ of relative magnitude~$1/\sqrt T$ and we eliminate all fluctuations in the~$u_1$ mode.  This recognises that the noise moves the effective centre of the material.

\paragraph{Eliminate the next transient} 
With a fixed~$t=T$ the third \sde~\eqref{eq:v2w} would involve fluctuations~$\Z{-\tau}\dot w_2$ that we now remove by varying the relation between time and pseudo-time.
Setting $-\rat1{\sqrt 2}X'\sqrt Tu_{1}+\rat12(t'-1)u_{0}+b_2\dot w_2=0$ then eliminates the forcing of~$u_2$ in the third \sde~\eqref{eq:v2w}.  Assume we chose the `space origin'~$X(T)$ so that $u_1=0$\,.  Then rearrange to $dt/dT=t'=1+2b_2\dot w_2/u_0$\,.  Consequently, using $dw_1(\tau)=dw_1(T)/\sqrt T$\,, remove the forcing of~$u_2$ in~\eqref{eq:v2w} by choosing real-time
\begin{equation}
t=t(0)+\int T\,d\tau+2b_2\int \frac{T}{u_0}\,dw_2(\tau)
=t(0)+T+\frac{2b_2}a\int_0^T\sqrt{T_1}\,dw_2(T_1).
\label{eq:sqrttdw}
\end{equation}
Figure~\ref{fig:sqrttdw} shows five realisations of an example of the relationship~\eqref{eq:sqrttdw} between real- and pseudo-time.
Additionally choosing the initial time~$t(0)$ of the coordinate transform appropriately~\cite{Suslov98a}, then causes $u_2=0$ for all time.  With these choices for the `origin'~$X(T)$ of the similarity transform, and the evolution of the pseudo-time~$T$ we eliminate both the transients of relative magnitude~$1/\sqrt T$ and~$1/T$, and all the fluctuations in the $u_1$~and~$u_2$ modes.

\begin{figure}
\centering
\begin{tabular}{c@{\ }c}
\rotatebox{90}{\hspace{15ex}real-time $t$} &
\includegraphics{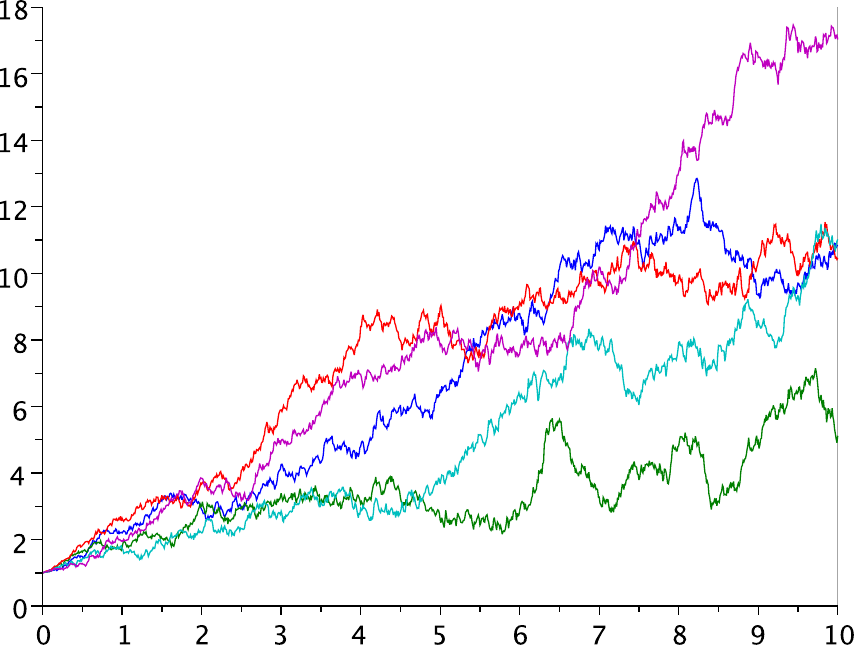}\\
& pseudo-time $T$
\end{tabular}
\caption{five realisations of the integral~\eqref{eq:sqrttdw} for the real-time versus pseudo-time relationship with parameters $t(0)=1$ and $2b_2/a=0.5$\,.}
\label{fig:sqrttdw}
\end{figure}

The pseudo-time is only very roughly linear in real-time (Figure~\ref{fig:sqrttdw}). Although real-time~$t$ does dominantly grow directly with pseudo-time~$T$, the stochastic integral provides significant fluctuations to the relationship.  The expectation $\EX \int_0^T\sqrt{T_1}\,dw_2(T_1)=0$ so that we deduce $\EX(t)=t(0)+T$.  However, It\^o's isometry shows
\begin{equation*}
\Var \int_0^T\sqrt{T_1}\,dw_2(T_1)=\int_0^TT_1\,dT_1=\rat12T^2.
\end{equation*}
That is, the size of the fluctuations in the pseudo-time about real time grows linearly in time.

This ability to optimally move the `origin' of space-time should also apply to stochastic self-similarity diffusion-like problems involving nonlinearity or multiplicative noise provided these effects are small enough perturbations.

\section{Stochastic advection and exchange in fluctuating time}
\label{s:saxft}

Turbulent mixing in fluids is vitally important and yet still incompletely understood.  One route to understand such turbulent processes is to explore mixing in a prescribed, but random, shear flow.  Majda~\cite{Majda93} begun exploring a model of mixing in a flow of a linear shear multiplied by a Gaussian white noise process. Majda, McLaughlin, Camassa et al.~\cite{McLaughlin96, Bronski2000, Camassa2008} continued exploring aspects of the stochastic aspects of the mixing.  They focussed on initial conditions which are statistically stationary in space.  In contrast, here we look at the problem of mixing in a stochastic shear flow from a compact release in space.  

As in section~\ref{sec:mstort} we find a fluctuating pseudo-time naturally arises in the emergent stochastic self-similarity.  The `mean' self-similarity displays classic diffusive growth to correspond to eddy diffusivity models of turbulence.  However, due to the stochastic shear flow, modelling fluid eddies, the concentration fields often partially reconstitute earlier times.  Such reconstitution is an anomalous diffusion which here we resolve via a fluctuating pseudo-time in the stochastic self-similarity.

\begin{figure}
\centering
\setlength{\unitlength}{1ex}
\begin{picture}(50,8)(0,0)
\thicklines
\put(0,7){$\fu_1(t,x)$}
\multiput(0,6)(5,0){10}{\vector(1,0){5}}
\put(0,0){$\fu_2(t,x)$}
\multiput(5,2)(5,0){10}{\vector(-1,0){5}}
\put(7,3.5){$\Uparrow\Downarrow$ exchange}
\put(25,3.5){$\pm \dot w(t)$ stochastic advection}
\end{picture}
\caption{schematic diagram of a stochastic shear flow in two `pipes' carrying some material with concentrations~$u_j(t,x)$ which exchanges between the `pipes' and is advected by a white noise velocity~$\pm\dot w(t)$.}
\label{fig:toyturb}
\end{figure}
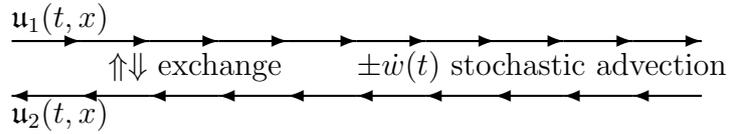

But to make progress in this first treatment we simplify the shear layer even further than Majda, McLaughlin, Camassa et al.~\cite{McLaughlin96, Bronski2000, Camassa2008}.  As shown in Figure~\ref{fig:toyturb}, we simplify by having just two layers, or `pipes' in which the advecting velocity field is formally Gaussian white noise~$\pm \dot w(t)$, equal and opposite in each `pipe'.  Notionally, these pipes correspond to near neighbouring streamlines in a turbulent fluid flow.  Some material of concentration~$u_j(t,x)$ in each `pipe' is exchanged between the `pipes' at a non-dimensional rate~$1/2$ ---analogous to diffusion across streamlines in turbulent flow. The governing stochastic \pde{}s for this system, in the Stratonovich interpretation, are thus the stochastic advection-exchange equations
\begin{equation} 
\D t{\fu_1}=\rat12(\fu_2-\fu_1)-\dot w(t) \D x{\fu_1} \,,\quad
\D t{\fu_2}=\rat12(\fu_1-\fu_2)+\dot w(t) \D x{\fu_2}\,.
\label{eq:u12}
\end{equation}
Now let's explore the stochastic self-similarity modelling of these dynamics following a compact release of material around $x=0$ at some initial time.

The self similarity appears with independent variables of  stretched space~$\xi$ and log-pseudo-time~$\tau$ defined by
\begin{equation}
\xi=x/\sqrt T\,,\quad
\tau=\log T\,,\quad
\dot T=\eta \dot w \,,\quad
\dot \eta =-\eta +\dot w
\label{eq:ssiv}
\end{equation}
(using slightly different notation in that in this section overdots denote time derivatives~$d/dt$).  Because of the symmetry in the stochastic advection, there is no need to seek a moving origin in space~$x$ that we addressed in the previous section~\ref{sec:mstort}.  The pseudo-time~$T$ is defined in terms of the auxiliary Ornstein--Uhlenbeck process~$\eta$, that is then combined with the forcing again to drive~$T$ (interpret in the Stratonovich sense). Curiously, the $T\eta$-system is a well-known irreproducible `kernel' in stochastic slow manifold analysis: Chao and Roberts~\cite[\S4]{Chao95} identified that the $T\eta$-system is its own stochastic slow manifold model; they analysed the corresponding Fokker--Planck equation to argue that on long time scales $dT=\rat12\,dt+\rat1{\sqrt2}dw_1$ for effectively independent noise~$dw_1$. This long time scale \sde\ accounts for the mean growth of pseudo-time~$T$ with time~$t$, and the fluctuations thereon, as shown by Figure~\ref{fig:sde1b}.  There is a good physical reason for the `reversals' in~$T(t)$ displayed by Figure~\ref{fig:sde1b}: differential advection by a `turbulent' fluctuation of a lump of material at some location spreads material; then a reversal of the fluctuation reconstitutes much of the earlier distribution.  Reversals in~$T(t)$ represent such reconstitution.

In addition to transforming independent variables, we transform dependent fields to the mean and difference variables 
\begin{align*}&
\text{(mean)}&&
u(\tau,\xi)/\sqrt T=\rat12(\fu_1+\fu_2),
\\&
\text{(difference)}&&
v(\tau,\xi)\eta /T=\rat12(\fu_1-\fu_2).
\end{align*}
The mean is scaled to decay like~$1/\sqrt T$ as in classic dispersion, but the difference field is scaled to decay faster, like~$1/T$, and moderated by the Ornstein--Uhlenbeck process~$\eta$.  In modelling turbulent mixing, the macroscopic dynamics of the mean field is of prime interest.

\begin{figure}
\centering
\begin{tabular}{c@{\ }c}
\rotatebox{90}{\hspace{10ex}pseudo-time $T$} &
\includegraphics{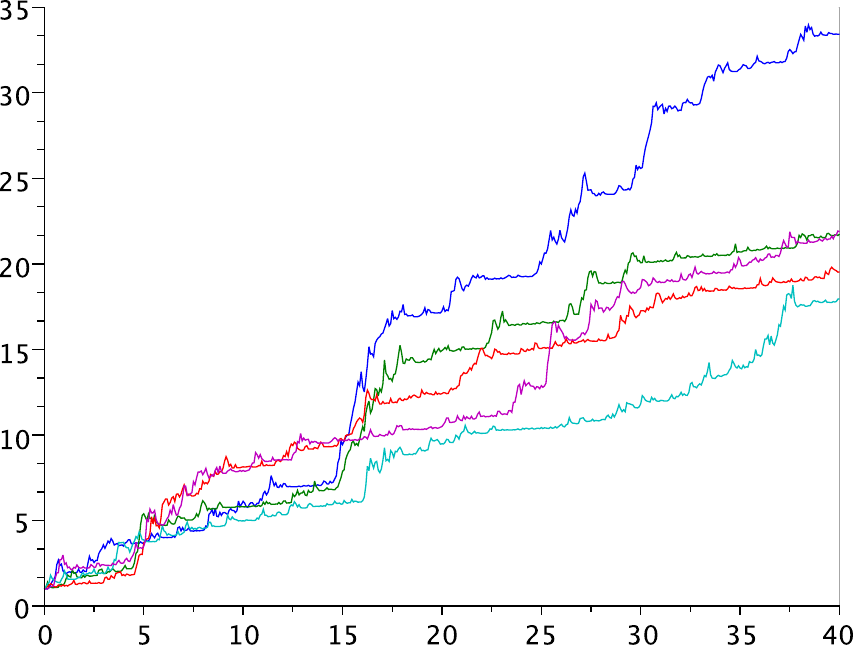}\\
& real-time $t$
\end{tabular}
\caption{five realisations of the pseudo-time~$T$ as a function of real-time~$t$.  The `reversals' in the relationship correspond to reversals in advection partially reconstituting the distribution of material at an earlier time.}
\label{fig:sde1b}
\end{figure}

Stochastic self-similarity emerges from the stochastic `turbulent' mixing.
Straightforward change of variable algebra derives that derivatives
\begin{equation*}
\D t{}=\frac{\eta\dot w}T\left(\D\tau{}-\rat12\xi\D\xi{}\right)
\quad\text{and}\quad
\D x{}=\frac1{\sqrt T}\D\xi{}.
\end{equation*}
Substituting these transformations into the advection-exchange \pde~\eqref{eq:u12} and rearranging, the transformed system for mean and difference variables in log-pseudo-time is
\begin{align} &
\D\tau u=\rat12u+\rat12\xi\D\xi u-\D\xi v\,,
\label{eq:dut}
\\&
v=-\D\xi u-e^{-\tau}\eta ^2\left[ \D\tau v-\rat12\xi \D\xi v-v\right].
\label{eq:dvt}
\end{align}
The second \spde~\eqref{eq:dvt} indicates that exponentially quickly in log-pseudo-time~$\tau$, the difference field $v\to -\D\xi u$\,. Substitute this limit into the first \pde~\eqref{eq:dut} and it becomes
\begin{equation}
\D\tau u=\cL u=\rat12u+\rat12\xi\D\xi u+\DD\xi u\,.
\label{eq:clu}
\end{equation}
This well known linear \pde\ thus describes the emergent dynamics of the advection-exchange system~\eqref{eq:u12}.

The solutions of the \pde~\eqref{eq:clu} settle on a Gaussian${}\propto G(\xi)$. Writing the field~$u(\tau,\xi)$ in the spectral expansion~\eqref{eq:vexp}, and then equating coefficients in the \pde~\eqref{eq:clu} leads to the system $\dot u_k=-\rat12ku_k$\,, $k=0,1,2,\ldots$\,. Hence all modes tend to zero like~$e^{-\tau/2}$ or quicker, except the $k=0$ mode which is constant.  Thus $u= aG(\xi)+\Ord{e^{-\tau/2}}$ for some constant~$a$. The attraction of these equilibria in the transformed variables predicts the emergence of the classic self-similar spread in physical variables that the mean concentration $\rat12(\fu_1+\fu_2)= aT^{-1/2}G(x/\sqrt T)+\Ord{1/T}$ as $t\to \infty$\,, albeit expressed in fluctuating pseudo-time.

This generic emergence of the spreading Gaussian is an appealing parallel with turbulent eddy diffusivity models.  The fluctuations in the stochastic self-similarity emphasise the difficulty of just one part of traditional deterministic models of turbulent mixing.  In real turbulence the situation is vastly more complex in that we picture many stochastic eddies occurring together.  In such a situation the fluctuations that each may generate individually may well average to a smaller net effect.  Further research is needed.

\section{Conclusions}

We demonstrated that centre manifold theory provides a straightforward and rigorous way of deriving the functional form of similarity solutions of nonlinear stochastic diffusion, and proving the emergence of stochastic similarity from quite general compact initial conditions.  In particular, sections~\ref{sec:sbe} and~\ref{s:sseepi} derived explicit results for a stochastic Burgers' equation and a stochastic cubic reaction-diffusion equation.  Section~\ref{sec:mstort} then showed that we could vary in time the location of the origin in space, and vary the rate of time, to optimally describe the stochastic self-similarity.  The last section~\ref{s:saxft} then used an analogous stochastic similarity transform to illustrate the emergence of an anomalous eddy diffusion process in a toy turbulent mixing problem. These techniques appear promising for useful modelling a wide class of stochastic nonlinear diffusion-like problems.

The analysis of the last two sections \ref{sec:mstort}~and~\ref{s:saxft} involves purely classical calculus and so also applies to deterministic~$w(t)$.

\paragraph{Acknowledgements}
This research was supported by the Australian Research Council grants DP0774311 and DP0988738.

%%%%%%%%%%%%%%
%%%%%%%%%%%%%%
%%%%%%%%%%%%%%
\ifcase1
\bibliography{ajr,bib}\bibliographystyle{plain}
\or

\fi

\end{document}